\newtheorem{theorem}{Theorem}[section]
\newtheorem{corollary}[theorem]{Corollary}
\newtheorem{lemma}[theorem]{Lemma}
\newtheorem{proposition}[theorem]{Proposition}
\theoremstyle{definition}
\newtheorem*{definition}{Definition}
\newtheorem*{claim}{Claim}
\newcommand{\Z}{\mathds{Z}}
\newcommand{\Q}{\mathds{Q}}
\newcommand{\eps}{\varepsilon}
\DeclareMathOperator{\lk}{lk}
\DeclareMathOperator{\sgn}{sgn} 
\DeclareMathOperator{\im}{im}
\DeclareMathOperator{\coker}{coker}
\DeclareMathOperator{\Span}{Span}
\DeclareMathOperator{\Hom}{Hom}
\DeclareMathOperator{\ev}{ev}
\DeclareMathOperator{\clasp}{clasp}
\DeclareMathOperator{\interior}{int}
\def\sm{\setminus}
\def\L{\Lambda}
\def\op{\operatorname}
\newcommand{\smoplus}[2]{\mbox{\footnotesize$\displaystyle\bigoplus\limits_{#1}^{#2}$}}
\newcommand{\smprod}[2]{\mbox{\footnotesize$\displaystyle\prod\limits_{#1}^{#2}$}}
\newcommand{\tmfrac}[2]{\mbox{\large$\frac{#1}{#2}$}}
\newcommand{\smsum}[2]{\mbox{\footnotesize$\displaystyle\sum\limits_{#1}^{#2}$}}
\newcommand{\lmcup}[2]{\mbox{\small$\displaystyle\bigcup\limits_{#1}^{#2}$}}
\newcommand{\smcup}[2]{\mbox{\footnotesize$\displaystyle\bigcup\limits_{#1}^{#2}$}} 
\newcommand{\ttmcup}[2]{\mbox{\footnotesize{$\textstyle \bigcup\limits_{#1}^{#2}$}}}
\newcommand{\smsqcup}[2]{\mbox{\footnotesize$\displaystyle\bigsqcup\limits_{#1}^{#2}$}}
\newenvironment{romanlist}
	{\begin{enumerate}
	}
	{\end{enumerate}}
\DeclareSymbolFont{EulerScript}{U}{eus}{m}{n}
\DeclareSymbolFontAlphabet\mathscr{EulerScript}
\begin{document}

\title{The Blanchfield pairing of colored links}
\author{Anthony Conway}
\address{Universit\'e de Gen\`eve, Section de math\'ematiques, 2-4 rue du Li\`evre, 1211 Gen\`eve 4, Switzerland}
\email{anthony.conway@unige.ch}
\author{Stefan Friedl}
\address{Fakult\"at f\"ur Mathematik\\ Universit\"at Regensburg\\   Germany}
\email{sfriedl@gmail.com}
\author{Enrico Toffoli}
\address{Fakult\"at f\"ur Mathematik\\ Universit\"at Regensburg\\   Germany}
\email{enricotoffoli@gmail.com}

\subjclass[2000]{57M25} 
\maketitle

\begin{abstract}
It is well known that the Blanchfield pairing of a knot can be expressed using Seifert matrices. In this paper, we compute the Blanchfield pairing of a colored link with non-zero Alexander polynomial. More precisely, we show that the Blanchfield pairing of such a link can be written in terms of generalized Seifert matrices which arise from the use of C-complexes.
\end{abstract}

\section{Introduction}
In the early days of knot theory, most invariants were extracted from the Alexander module $H_1(X_K;\mathbb{Z}[t^{\pm 1}])$ of a knot $K\subset S^3$. (Here, given a knot $K$, we denote by $X_K=S^3\setminus \nu K$ its exterior.) In 1957, Blanchfield~\cite{Bl} showed that the Alexander module supports   a non-singular pairing
$$ \op{Bl}(K)\colon H_1(X_K;\mathbb{Z}[t^{\pm 1}]) \times H_1(X_K;\mathbb{Z}[t^{\pm 1}]) \,\,\rightarrow\,\, \mathbb{Q}(t)/\mathbb{Z}[t^{\pm 1}]$$
which is sesquilinear (i.e.\ linear over~$\mathbb{Z}[t^{\pm 1}]$ in the first variable and conjugate-linear over~$\mathbb{Z}[t^{\pm 1}]$ in the second variable) and hermitian. The Blanchfield pairing can be expressed in terms of a Seifert matrix of $K$, see \cite{Ke, Lev, FP}.

\begin{theorem} \label{thm:blanchfield-in-terms-of-seifert-matrix-for-knot}
If $K$ is a knot and if~$A$ is  a Seifert matrix for $K$ of size~$2g$, then the Blanchfield pairing of~$K$ is isometric to the pairing
\begin{align*}
\mathbb{Z}[t^{\pm 1}]^{2g}/(tA-A^T)\mathbb{Z}[t^{\pm 1}]^{2g} \times  \mathbb{Z}[t^{\pm 1}]^{2g}/(tA-A^T)\mathbb{Z}[t^{\pm 1}]^{2g} &\,\,\rightarrow\,\, \mathbb{Q}(t)/\mathbb{Z}[t^{\pm 1}] \\
(a,b) &\,\,\mapsto\,\, a^T (t-1)(A-tA^T)^{-1}\overline{b}.
\end{align*}
\end{theorem}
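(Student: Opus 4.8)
The plan is to read off both the Alexander module and the Blanchfield pairing from a Seifert surface $F$ for $K$ with $H_1(F;\Z)\cong\Z^{2g}$, and then to rewrite the resulting geometric formulas in the stated algebraic form.

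\textbf{Step 1 (the infinite cyclic cover and the presentation).} I would first cut $X_K$ along $F$: set $Y=X_K\sm\interior(F\times[-1,1])$, so that $\partial Y$ contains two copies $F_+,F_-$ of $F$, and build the infinite cyclic cover $X_\infty\to X_K$ by gluing countably many copies $Y_n$ ($n\in\Z$) of $Y$ along these faces, with the deck transformation $t$ sending $Y_n$ to $Y_{n+1}$. Feeding this decomposition into the Mayer--Vietoris sequence of $\Lambda=\Z[t^{\pm1}]$-modules gives
\[
H_1(F;\Z)\otimes\Lambda\xrightarrow{\ \phi\ }H_1(Y;\Z)\otimes\Lambda\longrightarrow H_1(X_K;\Lambda)\longrightarrow H_0(F;\Z)\otimes\Lambda\xrightarrow{\ t-1\ }H_0(Y;\Z)\otimes\Lambda,
\]
and since $F$ and $Y$ are connected the last map is injective, so $H_1(X_K;\Lambda)=\coker(\phi)$. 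By Alexander duality $H_1(Y;\Z)\cong H_1(F;\Z)\cong\Z^{2g}$, and with respect to a basis $\{e_i\}$ of $H_1(F;\Z)$ and the dual (linking) basis of $H_1(Y;\Z)$ the two inclusions $F_\pm\hookrightarrow Y$ are given by $A^T$ and $A$; this is exactly the content of the classical identities $\lk(a^+,b)=\lk(a^-,b)+a\cdot b$ and $\lk(a^-,b)=\lk(b^+,a)$ for curves on $F$. A careful bookkeeping of orientations then identifies $\phi$ with the matrix $tA-A^T$, so that $H_1(X_K;\Lambda)\cong\Lambda^{2g}/(tA-A^T)\Lambda^{2g}$; note $\det(tA-A^T)=\Delta_K(t)\neq 0$, so $tA-A^T$ is invertible over $\Q(t)$. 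This isomorphism is the isometry I will verify.

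\textbf{Step 2 (unwinding the Blanchfield pairing).} Next I would recall that $\op{Bl}(K)$ is, by definition, the composite of the Poincaré--Lefschetz duality isomorphism $H_1(X_K;\Lambda)\cong H^2(X_K,\partial X_K;\Lambda)$, the isomorphism $H^2(X_K,\partial X_K;\Lambda)\cong H^2(X_K;\Lambda)$ coming from the long exact sequence of the pair (an isomorphism because $H^*(\partial X_K;\Lambda)$ is $(t-1)$-torsion while $H_1(X_K;\Lambda)$ is $\Delta_K$-torsion, and $t-1,\Delta_K$ are coprime in $\Lambda$ since $\Delta_K(1)=\pm1$), and the universal coefficient isomorphisms $H^2(X_K;\Lambda)\cong\ol{\op{Ext}^1_\Lambda(H_1(X_K;\Lambda),\Lambda)}\cong\ol{\Hom_\Lambda(H_1(X_K;\Lambda),\Q(t)/\Lambda)}$ (using $H_2(X_K;\Lambda)=0$ and that $H_1(X_K;\Lambda)$ is $\Lambda$-torsion). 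Equivalently — and more useful for the computation — one has the geometric description: for $x,y\in H_1(X_\infty)$, pick $p\in\Lambda$ and a $2$-chain $w$ in $X_\infty$ with $\partial w=p\cdot x$; then $\op{Bl}(K)(x,y)=\tfrac1p\sum_{n\in\Z}(w\cdot t^ny)\,t^{-n}\bmod\Lambda$, where $w\cdot t^ny$ is the ordinary intersection number in $X_\infty$.

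\textbf{Step 3 (the computation).} Using the presentation of Step~1, write $P=tA-A^T$, let $[\hat e_1],\dots,[\hat e_{2g}]$ be the corresponding generators of $H_1(X_\infty)$, and let $W_i$ be $2$-chains realizing the Mayer--Vietoris relations, i.e.\ $\partial W_i=\sum_kP_{ki}\hat e_k$; geometrically the $W_i$ are assembled from subsurfaces of the copies of $F$ sitting inside $X_\infty$. By Cramer's rule, $\det(P)\cdot\hat e_a=\partial Z_a$ with $Z_a=\sum_i(\op{adj}P)_{ia}W_i$, hence
\[
\op{Bl}(K)\bigl([\hat e_a],[\hat e_b]\bigr)=\frac{1}{\det P}\sum_{n\in\Z}\bigl(Z_a\cdot t^n\hat e_b\bigr)\,t^{-n}\ \bmod\ \Lambda .
\]
It then remains to evaluate the intersection numbers $W_i\cdot t^n\hat e_b$. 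Since each $W_i$ essentially lives in a single pair of blocks $Y_0\cup Y_1$, only $n\in\{0,1\}$ contribute, and those intersection numbers are computed by Seifert linking numbers: one finds that $\sum_n(W_i\cdot t^n\hat e_b)t^{-n}$ equals, up to a global factor $1-t^{-1}$, the $(i,b)$-entry of a matrix built out of $A$ and $A^T$. Substituting into the Cramer expression, collecting the factor $1-t^{-1}$ and using the identity $t\,\ol{tA-A^T}=A-tA^T$, one obtains exactly $a^T(t-1)(A-tA^T)^{-1}\ol b$. Finally I would check directly that this formula is well defined in $\Q(t)/\Lambda$, is $\Lambda$-linear in $a$ and conjugate-linear in $b$, and is hermitian: the last point is the matrix identity $\ol{(t-1)(A-tA^T)^{-1}}{}^{\,T}=(t-1)(A-tA^T)^{-1}$, which follows from $(A-tA^T)^T=A^T-tA=-t^{-1}(A-tA^T)$ up to the evident rearrangement.

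\textbf{Where the difficulty lies.} The delicate part is Step~3: producing the $2$-chains $W_i$ explicitly and computing their intersection numbers with the lifted curves while tracking all orientations, the placement of the conjugation, and — above all — the origin of the factor $t-1$, which does not come out of a naive count and must be traced to the difference between the two lifts of the Seifert surface in $X_\infty$ (equivalently, to the chain-level comparison between the Mayer--Vietoris presentation of $H_1(X_K;\Lambda)$ and the cellular cochain complex computing $H^2(X_K,\partial X_K;\Lambda)$). An alternative, more algebraic route performs the same comparison inside the $\op{Ext}$ picture of Step~2, using the free resolution $0\to\Lambda^{2g}\xrightarrow{P}\Lambda^{2g}\to H_1(X_K;\Lambda)\to 0$ to compute the composite isomorphism $H_1(X_K;\Lambda)\cong\ol{\op{Ext}^1_\Lambda(H_1(X_K;\Lambda),\Lambda)}$; either way, pinning down that comparison map (and hence the $t-1$) is the crux of the argument.
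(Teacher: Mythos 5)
Your route — cut $X_K$ along a Seifert surface, present $H_1(X_K;\Lambda)$ by $P=tA-A^T$ via Mayer--Vietoris, and evaluate $\op{Bl}(K)$ by intersecting $2$-chains with translates of lifted curves in $X_\infty$ — is the classical one and can be made to work, but as written there is a genuine gap, and it sits exactly where you flag it yourself. Step 3 is a promissory note, not a proof: the $2$-chains $W_i$ are never actually constructed (they are not simply ``assembled from subsurfaces of the copies of $F$'' — realizing $\partial W_i=\sum_k P_{ki}\hat e_k$ forces you to adjoin to the cylinder over $e_i$ explicit $2$-chains inside $Y_0$ and $Y_1$ exhibiting the homologies between the push-offs $e_i^{\pm}$ and the Alexander-dual basis curves, and it is these pieces that carry all the intersection numbers); the ``matrix built out of $A$ and $A^T$'' is never identified; and the global factor $1-t^{-1}$, together with the placement of the conjugation (note that coefficients of the $2$-chain come out of the sum $\sum_n(\,\cdot\,\cdot\, t^n\hat e_b)t^{-n}$ conjugated), is precisely what you admit you cannot yet trace. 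Since the content of the theorem \emph{is} this identity, ``one finds'' and ``one obtains exactly'' cannot stand in for the computation — historically this is the step where published arguments acquired wrong signs and transposes, which is why the careful treatment \cite{FP} cited by the paper exists. A second unproved ingredient is the substitution in Step~2 of the chain-level formula $\op{Bl}(K)(x,y)=\frac1p\sum_{n}(w\cdot t^ny)t^{-n}$ for the definition via Poincar\'e duality, the Bockstein and evaluation; that equivalence is itself a nontrivial chain-level duality statement, and it is where the sign and conjugation conventions you must then control in Step~3 get fixed.

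For contrast, the paper does not redo this bookkeeping in $X_\infty$ at all: Theorem~\ref{thm:blanchfield-in-terms-of-seifert-matrix-for-knot} is obtained in Section~\ref{section:comparison} as a corollary of Theorem~\ref{thm:main}, whose proof is four-dimensional — the exterior $W_F$ of a pushed-in Seifert surface in $D^4$, the geometric basis of $H_2(W_F;\Lambda_S)$ (Theorem~\ref{thm:main1}), the computation of the twisted intersection form as $H(t)=(1-t)A^T+(1-t^{-1})A$ by a transverse count of translated surface lifts (Theorem~\ref{thm:intform}), and a duality/Bockstein diagram relating this form to the Blanchfield pairing of the boundary; there the $(t-1)$-type factors drop out of the algebraic identity \eqref{eq:epsilonformula} rather than from delicate chain-level comparisons, and a final base-change diagram handles $\Lambda_S$ versus $\Z[t^{\pm 1}]$ (a point you avoid by working over $\Z[t^{\pm1}]$, at the price of the full classical computation). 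To complete your own route you must either carry out Step~3 in full — construct the $W_i$, compute every $W_i\cdot t^n\hat e_b$ as a Seifert linking number with orientations and conjugations tracked, in effect reproving \cite{FP} — or make precise the $\operatorname{Ext}$-level comparison you mention using the resolution $0\to\Lambda^{2g}\xrightarrow{P}\Lambda^{2g}\to H_1(X_K;\Lambda)\to 0$; either is feasible, but neither is a formality.
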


Our goal is to extend Theorem~\ref{thm:blanchfield-in-terms-of-seifert-matrix-for-knot} to links.  In this paper, a~$\mu$-{\em colored link\/} is an oriented link~$L$ in~$S^3$ whose components are partitioned into~$\mu$ sublinks~$L_1\cup\dots\cup L_\mu$. Throughout the text, we denote by~$X_L=S^3\sm \nu L$ the exterior of the  link. Furthermore, we write~$\Lambda_S:=\mathbb{Z}[t_1^{\pm1},\dots,t_\mu^{\pm 1},(1-t_1)^{-1},\dots,(1-t_\mu)^{-1}]$ for the localization of the ring of Laurent polynomials, and we denote by~$Q=\mathbb{Q}(t_1,\dots,t_\mu)$  the quotient field of~$\Lambda_S$. Using this setting, the Blanchfield pairing for knots generalizes to a sesquilinear pairing
$$ \op{Bl}(L)\colon TH_1(X_L;\Lambda_S) \times TH_1(X_L;\Lambda_S) \,\,\rightarrow\,\, Q/\Lambda_S$$
on the ~$\Lambda_S$-torsion submodule~$TH_1(X_L;\Lambda_S)$ of the Alexander module $H_1(X_L;\Lambda_S)$  of~$L$. We refer to Section~\ref{sub:Blanchfield} for details. For knots, we recover the Blanchfield pairing from above. (Here we use that in any knot module multiplication by $t-1$ is an isomorphism, i.e.\ the Alexander module over $\Lambda_S=\mathbb{Z}[t^{\pm 1},(1-t)^{-1}]$ is the same as the Alexander module over $\mathbb{Z}[t^{\pm 1}]$, see e.g.\ \cite{Lev} for details.)

In order to extend Theorem~\ref{thm:blanchfield-in-terms-of-seifert-matrix-for-knot} from knots to colored links, we shall make use of C-complexes and generalized Seifert matrices for colored links \citep{Cim,Co,CF}. Roughly speaking, a C-complex for a $\mu$-colored link $L$ consists in a collection of Seifert
surfaces $S_1, \dots , S_\mu$ for the sublinks $L_1, \dots , L_\mu$ that intersect only along clasps. Given such a C-complex and a sequence~$\varepsilon=(\varepsilon_1,\varepsilon_2,\dots, \varepsilon_\mu)$ of $\pm 1$'s, there are $2^\mu$ generalized Seifert matrices~$A^\varepsilon$ which extend the usual Seifert matrix (see Subsection \ref{sub:Seifert} for the details). We set 
$$H(t)\,\,=\,\,\sum_\varepsilon \,\prod_{i=1}^\mu (1-t_i^{\varepsilon_i}) \, A^\varepsilon,$$
where the sum is on all sequences~$\varepsilon=(\varepsilon_1,\varepsilon_2,\dots, \varepsilon_\mu)$ of~$\pm 1$'s. For example, if $L$ is a $1$-colored link (i.e. an oriented link), then 
\[ H(t)\,\,=\,\,(1-t)A^T+(1-t^{-1})A,\]
where $A$ denotes the usual Seifert matrix as defined in \cite{Rolfsen}. Finally, we shall call a C-complex $S$ \textit{totally connected} if each $S_i$ is connected and $S_i \cap S_j  \neq \emptyset $ for all $i\neq j$. 

Our main theorem reads as follows: 

\begin{theorem}
\label{thm:main}
Let $L$ be a $\mu$-colored link. Consider a totally connected C-complex for $L$ and let~$A^\varepsilon$ be the resulting $n \times n$-generalized Seifert matrices. Define $H(t)$ as above. If $H_1(X_L;\Lambda_S)$ is~$\Lambda_S$-torsion, then the Blanchfield pairing $\op{Bl}(L)$ is isometric to the pairing
\begin{align*}
\Lambda_S^n / H(t)^T \Lambda_S^n \times \Lambda_S ^n / H(t)^T \Lambda_S^n &\,\,\rightarrow\,\, Q/\Lambda_S \\
(a,b) &\,\,\mapsto\,\, -a^T H(t)^{-1}\overline{b}.
\end{align*}
$($Here $H(t)^{-1}$ is the inverse of $H(t)$ over $Q$.$)$
\end{theorem}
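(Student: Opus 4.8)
The plan is to follow the same strategy that proves Theorem~\ref{thm:blanchfield-in-terms-of-seifert-matrix-for-knot}, but adapted to the C-complex setting. The starting point is a geometric model: given a totally connected C-complex $S = S_1 \cup \dots \cup S_\mu$ for $L$, one cuts the exterior $X_L$ along (a pushed-off copy of) $S$ to obtain a manifold $Y$, and expresses the infinite cyclic cover (here, the universal abelian cover pulled back to $\Lambda_S$-coefficients) as an infinite union of copies of $Y$ glued along copies of the cut-open surfaces. Concretely, I would use the Mayer--Vietoris / "telescoping" description of $H_1(X_L;\Lambda_S)$ coming from this decomposition. In the knot case this yields the presentation $\mathbb{Z}[t^{\pm1}]^{2g}/(tA-A^T)\mathbb{Z}[t^{\pm1}]^{2g}$; in the colored setting, the analogous computation — which is already in the literature on multivariable Alexander modules via C-complexes (Cimasoni--Florens, and the generalized Seifert form work cited as \citep{Cim,Co,CF}) — gives that $H_1(X_L;\Lambda_S)$ is presented by $H(t)^T$ (equivalently $H(t)$, up to the standard transpose/conjugation bookkeeping), \emph{provided} $H(t)$ is invertible over $Q$, which is exactly the hypothesis that the Alexander module is $\Lambda_S$-torsion. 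The matrix $H(t)$ arising here is precisely the "total" C-complex Seifert matrix $\sum_\varepsilon \prod_i (1-t_i^{\varepsilon_i}) A^\varepsilon$ because the gluing maps record, for each pair of pushoff directions indexed by $\varepsilon$, the corresponding generalized Seifert linking.

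Next, with the presentation in hand, the Blanchfield pairing has to be identified. The abstract recipe (valid for any closed oriented $3$-manifold or, as here, a manifold with torus boundary whose relevant homology is torsion) is: $\op{Bl}(L)$ is obtained as the composition
\[
TH_1(X_L;\Lambda_S) \xrightarrow{\ \partial^{-1}\ } H_2(X_L, \partial X_L; Q/\Lambda_S) \xrightarrow{\ \mathrm{PD}\ } H^1(X_L; Q/\Lambda_S) \xrightarrow{\ \mathrm{Bockstein}^{-1}\ } H^1(X_L; Q) \xleftarrow{\ \mathrm{ev}\ } \ \dots
\]
i.e.\ it is the standard "linking form" assembled from Poincar\'e duality and the Bockstein associated to $0 \to \Lambda_S \to Q \to Q/\Lambda_S \to 0$. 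The key computational step is to chase an explicit $1$-cycle representing a class $a \in \Lambda_S^n/H(t)^T\Lambda_S^n$ through this sequence. Because $a$ becomes zero after multiplying by $H(t)^T$, one can build an explicit $2$-chain bounding $H(t)^T a$; applying duality and comparing with the generators dual to the C-complex pieces produces the value $-a^T H(t)^{-1}\overline{b}$. The sign and the appearance of $H(t)^{-1}$ (rather than $H(t)^{-T}$) are dictated by the hermitian symmetrization identity $\overline{H(t)}^T = -H(t)$ up to units, which is the multivariable analogue of $\overline{(tA-A^T)}^T \doteq (t^{-1}A^T - A)$ and which must be verified from the $S$-equivalence relations among the $A^\varepsilon$.

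I expect the main obstacle to be \emph{matching the homological algebra output to the specific matrix $H(t)$ and to the stated normalization}. There are several sources of potential sign/transpose discrepancies: (i) the choice of which pushoff convention defines $A^\varepsilon$ versus $(A^\varepsilon)^T$; (ii) the sesquilinearity convention (linear in the first vs.\ second variable, and the direction of the bar involution); (iii) the identification of the boundary map $\partial$ in the long exact sequence of the pair with "division by $H(t)^T$"; and (iv) ensuring that the hypothesis of \emph{total connectedness} of the C-complex is genuinely used — it is needed so that the cut-open pieces $Y$ have the expected homology (connected surfaces, all pairwise clasp intersections) and the telescoping presentation has exactly $n$ generators with no spurious summands. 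A clean way to organize the argument, and to sidestep some of this bookkeeping, is to first prove the statement at the level of the \emph{pairing on the presented module over $\Lambda_S$} by a direct diagram chase, and only afterwards invoke the uniqueness of a non-degenerate sesquilinear pairing compatible with the presentation (together with agreement with the knot case after specializing $\mu = 1$) to pin down the overall sign. Finally, one checks that the formula indeed lands in $Q/\Lambda_S$ and is well-defined, i.e.\ independent of the representatives $a,b$ and of the choice of C-complex, the latter following from the Reidemeister-type moves on C-complexes established in \citep{Cim,CF}.
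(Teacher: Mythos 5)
Your outline is not the paper's argument (the paper works four-dimensionally: it pushes the C-complex into $D^4$, shows the exterior $W_F$ has $H_1(W_F;\Lambda_S)=0$ for a totally connected C-complex, identifies $H_2(W_F;\Lambda_S)\cong H_1(S)\otimes\Lambda_S$ by explicit surfaces, proves the twisted intersection form is $H(t)$, and then runs a Poincar\'e duality/Bockstein diagram to pass from this intersection form to $\operatorname{Bl}(L)$ on $H_1(\partial W_F;\Lambda_S)\cong H_1(X_L;\Lambda_S)$), and as written it has genuine gaps rather than being a complete alternative. The two central computational facts are exactly the ones you defer: (i) that cutting $X_L$ along $S$ and ``telescoping'' the $\Z^\mu$-cover yields a square presentation of $H_1(X_L;\Lambda_S)$ by $H(t)^T$ with no extra generators, and (ii) that chasing duality through this presentation produces the value $-a^TH(t)^{-1}\overline b$. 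Neither is available off the shelf in the form you need: the C-complex literature you cite computes Alexander invariants and signature-type data, and the multivariable cut-and-paste is much more delicate than the infinite cyclic case (the cut pieces do not assemble into a linear telescope; this is precisely where total connectedness must do real work, which in the paper appears as Proposition~\ref{prop:pi1} and Corollary~\ref{cor:H10}). Asserting that the computation ``gives $H(t)^T$ up to standard bookkeeping'' is the theorem, not a reduction of it.

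The proposed shortcut for fixing the normalization is also not valid. A torsion $\Lambda_S$-module presented by a hermitian matrix does not carry a \emph{unique} non-singular sesquilinear pairing compatible with the presentation (for instance $-\operatorname{Bl}(L)$, or $u\operatorname{Bl}(L)$ for a unit $u$ with $u=\bar u$, is equally compatible), and agreement with the $\mu=1$ case cannot determine the sign or the $H(t)^{-1}$-versus-$H(t)^{-T}$ choice for $\mu\ge 2$; moreover the identity you invoke, $\overline{H(t)}^T=-H(t)$ ``up to units,'' is not the relevant symmetry, since $H(t)$ is hermitian, $\overline{H(t)}^T=H(t)$. The sign and the exact formula have to come out of an honest duality computation; in the paper this is the content of Theorem~\ref{thm:intform} (the intersection form of $W_F$ is $H(t)$, not $H(t)^T$ — a point the authors stress is easy to get wrong) together with the anti-commutativity statement of Lemma~\ref{lem:cube}, which is where the minus sign in $-a^TH(t)^{-1}\overline b$ originates. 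To make your 3-dimensional route work you would need to carry out the analogues of these two steps explicitly: a cycle-level computation of the Mayer--Vietoris boundary identifying the presentation matrix, and a cycle-level duality/Bockstein chase producing the pairing value, rather than appealing to uniqueness.
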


As the matrix~$H(t)$ is hermitian, Theorem \ref{thm:main} provides a proof that the Blanchfield pairing $\op{Bl}(L)$ is hermitian. To the best of our knowledge, in the case of links, the only other proof of this fact was recently given by Powell~\cite{Powell}. Moreover, we know of no computation of the Blanchfield form for links which are not (homology) boundary links \cite{CochranOrr, Hillman}. 

The main technical ingredient in the proof of Theorem~\ref{thm:main} is the following result, which is also  of independent interest, see e.g.\ \cite{CNT}.
The theorem below is stated with more details in Theorem \ref{thm:intform}.

\begin{theorem}
\label{thm:intform-intro}
Let $L$ be a colored link, let $S$ be a C-complex for $L$ and let $W$ be the exterior of a push-in of the C-complex into the 4-ball $D^4$. 
Let $H(t)$ be the $n\times n$-matrix described above. Then the intersection pairing on $H_2(W;\Lambda_S)$ is represented by the matrix $H(t)$.
\end{theorem}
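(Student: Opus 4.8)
The plan is to compute the intersection pairing on $H_2(W;\Lambda_S)$ by building an explicit handle decomposition (equivalently, a CW-structure) of $W$ from the C-complex $S$, and then identifying the geometric intersections of the resulting $2$-cycles with the twisted intersection numbers that appear in the generalized Seifert matrices $A^\varepsilon$. First I would set up the push-in: isotope each Seifert surface $S_i$ slightly into $D^4$ (keeping $\partial S_i = L_i$ on $S^3 = \partial D^4$) so that the pushed-in surfaces still meet only along the clasp arcs, forming a $2$-complex $\widehat S \subset D^4$; let $W = D^4 \setminus \nu \widehat S$. The first task is to understand $H_*(W;\Lambda_S)$: using the Mayer--Vietoris sequence (or Alexander duality in $D^4$) one shows that $W$ is homotopy equivalent to the exterior of the C-complex in $S^3$ after attaching, and that $H_2(W;\Lambda_S)$ is a free $\Lambda_S$-module of rank $n$, where $n = \sum_i \operatorname{rank} H_1(S_i)$. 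Concretely, a basis of $H_1(\widehat S)$ (a collection of loops, each lying on a single $S_i$) gives rise, via a Leray--Hirsch / linking-torus construction in $W$, to a basis of $H_2(W;\Lambda_S)$: to a loop $\gamma$ on $S_i$ one associates the class obtained by taking the boundary of a neighborhood of a $2$-chain it bounds in $D^4$, suitably lifted to the infinite cyclic(-in-each-color) cover.

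The core computation is then the intersection number in $W$ of the two lifted $2$-cycles $\widehat\alpha$ and $\widehat\beta$ coming from loops $\alpha, \beta$ on the C-complex. This is where the matrices $A^\varepsilon$ enter: pushing $\alpha$ off $\widehat S$ in the $\varepsilon$-direction (positive or negative normal direction relative to each surface it is allowed to cross) and computing linking numbers with $\beta$ reproduces, by definition, the entries of the generalized Seifert matrices. The deck-transformation bookkeeping — each time a cycle passes through the cover we pick up a factor $t_i^{\pm 1}$ — is exactly engineered so that summing the $2^\mu$ pushoff directions weighted by $\prod_i (1 - t_i^{\varepsilon_i})$ collects all of these contributions into the single matrix $H(t) = \sum_\varepsilon \prod_i (1 - t_i^{\varepsilon_i}) A^\varepsilon$. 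I expect the cleanest route is to first do the untwisted ($\mu$-variable trivial, i.e.\ $\mathbb Z$-coefficient) computation to see the combinatorics of how the clasps force the mixed terms $A^\varepsilon$ to appear, and then lift everything equivariantly; the formula $(1-t)A^T + (1-t^{-1})A$ in the $1$-colored case is the sanity check that the signs and conjugations are correct.

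The main obstacle, I anticipate, is twofold. First, making the homology computation of $H_2(W;\Lambda_S)$ genuinely rigorous over the localized ring $\Lambda_S$: the C-complex is not a manifold (it has clasp singularities), so one must either resolve it or argue carefully with the Mayer--Vietoris spectral sequence for the cover, and one needs the inverting of $(1-t_i)$ precisely to kill the unwanted homology coming from the meridians of the individual components and from the clasp intersections. Second, and more delicately, pinning down the equivariant intersection pairing requires a careful choice of lifts and basepaths in the infinite cover of $W$, and tracking how the clasp arcs — where $S_i$ and $S_j$ cross — contribute a $t_i t_j$-type interaction term; this is the step where the precise definition of the $A^\varepsilon$ from Subsection~\ref{sub:Seifert} must be matched term-by-term against geometric transverse intersections in $W$. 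Once that dictionary is established the result follows, but verifying it with the correct conventions (orientations of the pushoffs, which color gets which normal direction, hermitian versus transpose) is the part that demands genuine care rather than routine bookkeeping.
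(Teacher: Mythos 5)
Your overall strategy---build explicit $2$-cycles in the free abelian cover of the push-in exterior from curves on $S$, and identify their equivariant intersection numbers with linking numbers of the pushoffs $\alpha^\varepsilon$, so that the weights $\prod_i(1-t_i^{\varepsilon_i})$ assemble the $A^\varepsilon$ into $H(t)$---is the same as the paper's. But there is a genuine gap in your homological setup. You assert that $H_2(W;\Lambda_S)$ is free of rank $n=\sum_i\operatorname{rank}H_1(S_i)$ and that a basis of $H_1(\widehat S)$ can be represented by loops each lying on a single $S_i$. Both claims fail as soon as the clasp graph has cycles (already for two surfaces joined by two clasps): $H_1(S)$ sits in the exact sequence $0\to\bigoplus_i H_1(S_i)\to H_1(S)\to H_1(\Gamma)\to 0$, where $\Gamma$ has one vertex per surface and one edge per clasp, and the $n$ in the theorem is $\operatorname{rank}H_1(S)$, the size of the generalized Seifert matrices. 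The classes coming from $H_1(\Gamma)$ are represented by curves that cross clasps and run over several surfaces (the ``nice'' curves of Lemma~\ref{lem:nice-basis}); the rows and columns of $A^\varepsilon$, hence of $H(t)$, indexed by these classes are exactly the ones your construction never produces, so you would compute a pairing on a module of the wrong rank against a matrix of the wrong size. In the paper these clasp-crossing curves are the heart of the matter: the isomorphism $H_1(S)\otimes\Lambda_S\cong H_2(W;\Lambda_S)$ of Theorem~\ref{thm:main1} is proved by comparing the sequence above with a Mayer--Vietoris decomposition $W_F=B\cup Z$ through explicit maps $\sigma$ and $\tau$ (the latter living on $H_1(\Gamma)$) and the five lemma, and the surfaces $\Phi_\alpha$ must be capped off inside the clasp blocks $X_{ij}^k\times K$ precisely to accommodate such curves.

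A second, related gap: your recipe for the $2$-cycle attached to a loop (``boundary of a neighborhood of a $2$-chain it bounds in $D^4$'') is not a construction from which the intersection computation follows. The paper instead takes, for each $\varepsilon$, a Seifert surface for the pushoff $\alpha^\varepsilon$ pushed into the $4$-ball $B$, places it in the deck translate $t^{\frac{\mathbf{1}+\varepsilon}{2}}\overline{B}$ with sign $\sgn(\varepsilon)$, joins these $2^\mu$ sheets by cylinders through the thickened surfaces and caps them inside the clasp $4$-balls. It is this specific assembly that forces all intersections of $\Phi_\alpha$ with $t^g\Phi_\beta$ into single lifts of $B$, converts them into $\lk(\alpha^\varepsilon,\beta)$, and makes the factors $\prod_i(1-t_i^{\varepsilon_i})$ emerge from the identity $\sum_\varepsilon\sgn(\varepsilon)\,t^{\frac{\mathbf{1}+\varepsilon}{2}}=\prod_i(t_i-1)$ rather than being imposed by hand. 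Likewise, freeness of $H_2(W;\Lambda_S)$ is not a quick Mayer--Vietoris or Alexander-duality observation; it is the content of Theorem~\ref{thm:main1}, and it genuinely uses that the elements $1-t_i$ are inverted in $\Lambda_S$. Without the correct basis of $H_1(S)$ and an explicit cycle for every nice curve, the ``dictionary'' you describe cannot be set up, so the proposal as written does not prove Theorem~\ref{thm:intform-intro}.
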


The statement of Theorem~\ref{thm:intform-intro} is well-known for knots, see e.g.\ \cite[Proof~of~Lemma~5.4]{COT04} and~\cite[Theorem~3]{Ko89}. Note also that a similar statement appears in  \cite[Proof~of~Proposition~1]{Li84}, but there the author obtains the matrix $H(t)^T$ instead of $H(t)$, which we think is slightly incorrect. See also \cite[Proposition 6.5]{CF} for a similar statement involving finite abelian covers branched along a C-complex. Finally, note that contrarily to Theorem \ref{thm:main}, Theorem \ref{thm:intform-intro} requires neither $H_1(X_L;\Lambda_S)$ to be torsion, nor the $C$-complex to be totally connected.

The paper is organized as follows. In Section~$\ref{sec:preliminaries}$, we recall the definitions of generalized Seifert matrices, twisted homology, intersection forms and Blanchfield pairings. In Section \ref{sec: Pushin} and \ref{sec:intform}, we consider the exterior of the ``push-in of a C-complex" in the 4-ball and its twisted intersection form. Finally in Section \ref{sec:proof}, we prove Theorem \ref{thm:main} and show that for knots it recovers Theorem~\ref{thm:blanchfield-in-terms-of-seifert-matrix-for-knot}.

\subsection*{Notation and conventions.}
We denote by $p\mapsto \overline{p}$ the usual involution on $\mathbb{Q}(t_1,\dots,t_\mu)$ induced by $\overline{t_i}=t_i^{-1}$.
Furthermore, given a subring $R$ of $\mathbb{Q}(t_1,\dots,t_\mu)$ closed under the involution, and given an $R$-module $M$, we denote by $\overline{M}$ the $R$-module that has the same underlying additive group as $M$, but for which the action by $R$ on $M$ is precomposed with the involution on $R$. Finally, given any ring $R$,  we think of elements in $R^n$ as column vectors.

\subsection*{Acknowledgments.} 
The first author thanks Indiana University for its hospitality and was supported by the NCCR SwissMap and a Doc.Mobility fellowship, both funded by the Swiss FNS. The second author was supported by the SFB 1085 ``Higher invariants'', funded by the Deutsche Forschungsgemeinschaft (DFG). The third author was supported by the GK ``Curvature, Cycles and Cohomology'', also funded by the Deutsche Forschungsgemeinschaft (DFG).
The authors also with to the thank the referee for carefully reading the first version of this paper.

\section{Topological preliminaries}
\label{sec:preliminaries}

In this section, we review the necessary preliminaries needed for the proof of Theorem \ref{thm:main}. In Subsection \ref{sub:Seifert}, following closely \cite{CF}, we recall the definition of C-complexes and generalized Seifert matrices. In Subsections \ref{sub:twisted} and \ref{sub:intersection forms}, we recall the definition of twisted homology modules and intersections forms. Finally, Subsection \ref{sub:Blanchfield} deals with the Blanchfield pairing.

\subsection{C-complexes and generalized Seifert matrices}
\label{sub:Seifert}

For $\mu \in \mathbb{N}$, a $\mu$-colored link is an oriented link $L$ in $S^3$ whose components are partitioned into $\mu$ sublinks $L_1\cup \dots \cup L_\mu$.  A {\em~$C$-complex\/}~\citep{Co} for a~$\mu$-colored link~$L$ is a union~$S=S_1\cup\dots\cup S_\mu$ of surfaces in~$S^3$ such that:
\begin{romanlist}
\item{for all~$i$,~$S_i$ is a Seifert surface for the sublink of~$L$ of color~$i$;}
\item{for all~$i\neq j$,~$S_i\cap S_j$ is either empty or a union of clasps (see Figure \ref{fig:clasp});}
\item{for all~$i,j,k$ pairwise distinct,~$S_i\cap S_j\cap S_k$ is empty.}
\end{romanlist}

In the case~$\mu=1$, a~$C$-complex for~$L$ is nothing but a Seifert surface for the link~$L$, while the existence of a~$C$-complex for an arbitrary colored link was established in~\cite[Lemma 1]{Cim}. 
\begin{figure}[h]
\labellist\small\hair 2.5pt
\pinlabel {$\alpha$} at 18 58
\pinlabel {$S_i$} at 94 110
\pinlabel {$S_j$} at 331 118
\endlabellist
\centering
\includegraphics[width=0.4\textwidth]{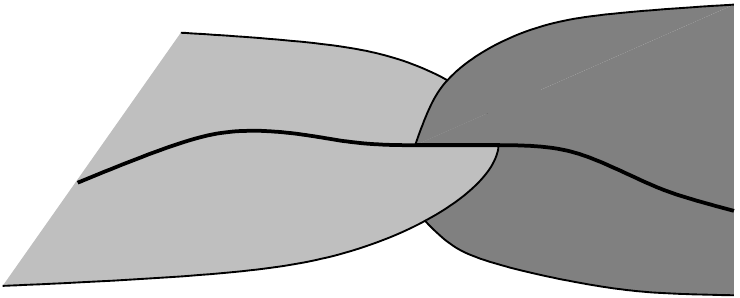}
\caption{A clasp intersection crossed by a~$1$-cycle~$x$.}
\label{fig:clasp}
\end{figure}
Given a sequence~$\eps=(\eps_1,\dots,\eps_\mu)$ of~$\pm 1$'s, let~$i^\eps\colon H_1(S)\to H_1(S^3\setminus S)$ be defined as follows. Any homology class $x\in H_1(S)$ can be represented by an oriented cycle~$\alpha$ which behaves as
illustrated in Figure~\ref{fig:clasp} whenever crossing a clasp. Then, define~$i^\eps(x)$ as
the class of a~$1$-cycle obtained by pushing~$\alpha$ in the~$\eps_i$-normal direction off~$S_i$ for~$i=1,\dots,\mu$. 
Finally, consider the bilinear form
\[
H_1(S)\times H_1(S)\to\Z,\quad(x,y)\mapsto \op{lk} (i^\eps(x),y)\,,
\]
where~$\op{lk}$ denotes the linking number.
Fixing a basis of~$H_1(S)$, the resulting matrices~$A^\eps$ are called \textit{generalized Seifert matrices} of $L$. As in the introduction, we set
$$H(t)\,\,=\,\,\sum_\varepsilon \, \prod_{i=1}^\mu (1-t_i^{\varepsilon_i})\, A^\varepsilon,$$ 
where the sum is on all sequences~$\varepsilon=(\varepsilon_1,\varepsilon_2,\dots, \varepsilon_\mu)$ of~$\pm 1$'s. Since $A^{-\eps}=(A^\eps)^T$, $H(t)$ is a hermitian matrix with respect to the involution $t_i\mapsto t_i^{-1}$. Evaluating $H(t)$ at some $\omega\in (S^1)^\mu$ yields a hermitian matrix $H(\omega)$ with complex coefficients, but note that this is the \textit{complex conjugate} of the matrix with the same name in \cite[Section 2.2]{CF}.

We conclude this subsection by introducing some terminology.

\begin{definition}
\label{def:nice}
A curve on a C-complex $S$ is called \emph{nice} if the following conditions are satisfied:
\begin{enumerate}
\item it has no self-intersections, 
\item the restriction to each component $S_i$ is an embedding,
\item it intersects each clasp at most once,
\item when it intersects a clasp, then it looks locally like in Figure~\ref{fig:clasp}.
\end{enumerate}
\end{definition}

\begin{lemma}\label{lem:nice-basis}
There exists a basis of $H_1(S)$ for which each element is represented by a nice curve.
\end{lemma}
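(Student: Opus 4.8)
The plan is to build the desired basis in two stages: first produce a basis of $H_1(S)$ in which every class is represented by an \emph{embedded} curve on $S$, and then arrange that each such curve meets the clasps correctly and transversally, as required by Definition~\ref{def:nice}. Recall that $S = S_1 \cup \dots \cup S_\mu$ is obtained from the disjoint union $S_1 \sqcup \dots \sqcup S_\mu$ by gluing the surfaces together along the clasp arcs; since there are finitely many clasps and no triple intersections, $S$ is homotopy equivalent to the space obtained from $\bigsqcup_i S_i$ by attaching, for each clasp between $S_i$ and $S_j$, a single $1$-cell connecting a point of $S_i$ to a point of $S_j$. A Mayer--Vietoris / graph-of-spaces argument then expresses $H_1(S)$ as the direct sum of $\bigoplus_i H_1(S_i)$ with a free part coming from the "clasp graph" whose vertices are the components $S_i$ and whose edges are the clasps.

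For the first stage, I would choose on each connected surface $S_i$ a standard symplectic basis of $H_1(S_i)$ represented by embedded, pairwise-transverse simple closed curves lying in the interior of $S_i$, away from all the clasp arcs (this is the usual fact about closed-up surfaces with boundary). For the free "graph" part, pick a spanning tree of the clasp graph; each edge (clasp) not in the tree, together with the tree path joining its endpoints, gives a loop in $S$, which can be realized by a curve that runs through the interiors of the relevant $S_i$'s and crosses exactly the clasps appearing along that path, each exactly once. These loops can be chosen embedded and, by general position within each $S_i$, disjoint from the $H_1(S_i)$-curves and from each other except where forced. Together these curves represent a basis of $H_1(S)$ by the Mayer--Vietoris computation above.

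For the second stage, I must upgrade "embedded curve crossing each clasp at most once" to the full list of conditions in Definition~\ref{def:nice}, the key point being condition~(4): near a clasp the curve should look like the arc $\alpha$ in Figure~\ref{fig:clasp}. This is achieved by a local isotopy supported in a neighborhood of each clasp: a clasp is a standard local model (two half-disks of $S_i$ and $S_j$ meeting along a short arc), and any arc of a curve passing through that neighborhood transversally once can be isotoped, within the neighborhood, to the standard picture without affecting the curve elsewhere and without changing its homology class. Performing this finitely many times and then a final transversality perturbation yields a nice basis.

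**Main obstacle.**
The routine parts are the curve-realization on each $S_i$ and the local normalization at clasps; the one point demanding genuine care is the homological bookkeeping that the curves so produced actually form a \emph{basis} (not just a generating set) of $H_1(S)$ — i.e., getting the Mayer--Vietoris/graph-of-spaces decomposition of $H_1(S)$ exactly right, including the rank contributed by the clasps, and checking that the chosen curves (the symplectic bases of the $H_1(S_i)$ plus one loop per non-tree clasp) map to a basis under that decomposition. Everything else is general position and local models.
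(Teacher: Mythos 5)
Your proposal is correct and follows essentially the same route as the paper: decompose $H_1(S)$ via the clasp graph (the short exact sequence $0\to\bigoplus_i H_1(S_i)\to H_1(S)\to H_1(\Gamma)\to 0$), take bases of each $H_1(S_i)$ by embedded curves away from the clasps, and realize a spanning-tree basis of $H_1(\Gamma)$ by curves crossing each relevant clasp exactly once. The only cosmetic differences are your explicit mention of symplectic bases and the final local isotopy normalizing the clasp crossings, which the paper absorbs into the construction of the nice representatives.
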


\begin{proof}
Up to homotopy equivalence, $S$ can be constructed by taking the disjoint union of the surfaces $S_i$ and adding an arc connecting $S_i$ with $S_j$ for each clasp. Contracting every surface to a point produces a graph $\Gamma$ with $\mu$ vertices $V_1,\dots, V_\mu$ and one edge for each clasp. This construction yields the short exact sequence
\[ 0\,\to \,\smoplus{i=1}{\mu} H_1(S_i)\,\to\, H_1(S)\,\xrightarrow{\pi} \,H_1(\Gamma)\,\to\,0, \]
where the non-trivial maps are respectively induced by the inclusions of the disjoint $S_i$'s into $S$, and the projection to the quotient. The surjectivity of $\pi$ is immediate since any embedding of $\Gamma$ into $S$ produces a right inverse. 

It is clear that each $H_1(S_i)$ admits a basis given by embedded curves that do not intersect any of the clasps. Thus it remains to find nice curves on $S$ whose images under $\pi$ form a basis for $H_1(\Gamma)$. Next, we say that a path on $\Gamma$ is \textit{simple} if it  intersects each vertex and each edge at most once. Consequently, the lemma will follow from the following two assertions:
\begin{enumerate}
\item $H_1(\Gamma)$ admits a basis consisting of simple closed curves,
\item given any simple closed curve $\gamma$ on $\Gamma$, there exists a nice curve $s$ on $S$ with $\pi(s)=\gamma$.
\end{enumerate}
The first statement is of course well-known. For the reader's convenience we sketch the argument. Let $T$ be a maximal tree in $\Gamma$ and let $e_1,\dots,e_k$ be the edges in $\Gamma\sm T$. We can connect the end points of each $e_i$ by a simple path  $p_i$ in the tree $T$. Now, for $i=1,\dots,k$, the curves $\gamma_i=e_i\cup p_i$ are simple and represent a basis for $H_1(\Gamma)$.

In order to prove the second assertion, observe that each vertex $V_i$ crossed by a simple closed curve $\gamma$ is both the initial point of a unique edge crossed by $\gamma$ and the terminal point of a unique edge crossed by $\gamma$. Next, we pick an embedded curve $\gamma_i$ on the corresponding surface $S_i$ connecting the end points of the two clasps. Finally, we define $s$ as the curve on $S$ which is given by the union of the following paths:
\begin{enumerate}
\item for each edge crossed by $\gamma$, we take the corresponding clasp,
\item for each vertex $V_i$ crossed by $\gamma$, we take the simple closed curve $\gamma_i$ on $S_i$.
\end{enumerate}
Since $s$ is clearly nice and satisfies $\pi(s)=\gamma$, this concludes the proof.
\end{proof}

\subsection{Twisted homology and cohomology groups}
\label{sub:twisted}

Let~$X$ be a CW complex, let~$\varphi\colon \pi_1(X) \rightarrow \mathbb{Z}^\mu$ be an epimorphism, and denote by~$p\colon \widetilde{X} \rightarrow X$ the regular cover of~$X$ corresponding to the kernel of~$\varphi$. Given a subspace~$ Y \subset X$, we shall write~$\widetilde{Y}=p^{-1}(Y)$, and view~$C_*(\widetilde{X},\widetilde{Y})$ as a chain-complex of free left modules over the ring $\Lambda=\mathbb{Z}[t_1^{\pm 1},\dots,t_\mu^{\pm 1}]$. (The ring $\Lambda$ is of course commutative, so any left-module is also a right-module, nonetheless it is quite helpful to keep in mind the ``natural'' module structures which would also work over non-commutative rings.) Given a commutative ring $R$ and a $(R,\Lambda)$-bimodule~$M$, consider the chain complexes
\begin{align*}
&C_*(X,Y;M)=M \otimes_{\Lambda}C_*(\widetilde{X},\widetilde{Y}) \\
&C^*(X,Y;M)=\Hom_{\Lambda}\Big(\overline{C_*\big(\widetilde{X},\widetilde{Y}\big)},M\Big)
\end{align*}
of~left $R$-modules and denote the corresponding homology $R$-modules by~$H_*(X,Y;M)$ and~$H^*(X,Y;M)$.  In particular, one can use the canonical isomorphism of~$\Lambda \otimes_{\Lambda}C_*(\widetilde{X},\widetilde{Y})$ with~$C_*(\widetilde{X},\widetilde{Y})$ to recover the homology of the covering space:
$$ H_*(X,Y;\Lambda) \cong H_*(\widetilde{X},\widetilde{Y};\mathbb{Z}).$$
Since localizations are flat, we obtain that $H_*(X,Y;\Lambda_S)=H_*(X,Y;\Lambda)\otimes_{\Lambda}\Lambda_S$. 
For later use, let us fix some additional notation. Sending a
cocycle~$f \in  \Hom_{\Lambda} \left(  \overline{C_*(\widetilde{X},\widetilde{Y})},Q/\Lambda_S \right)$ to the~$\Lambda_S$-linear map defined by~$\sigma\otimes p \mapsto \overline{f(\sigma)}\cdot p$  yields a well-defined isomorphism of~left $\Lambda_S$-modules  
$$\kappa\colon H^i(X,Y;Q/\Lambda_S) \,\,\rightarrow\,\, H_i\Big(\overline{\Hom_{\Lambda_S}\big(C_*(X,Y;\Lambda_S),Q/\Lambda_S\big)}\Big).$$
We also consider the evaluation map
$$ \text{ev}\colon  H_i\Big(\overline{\Hom_{\Lambda_S}(C_*(X,Y;\Lambda_S),Q/\Lambda_S)}\Big) \,\,\rightarrow\,\, \overline{\Hom_{\Lambda_S}(H_i(C_*(X,Y;\Lambda_S)),Q/\Lambda_S)}.$$
The composition $\text{ev} \circ \kappa$ will allow us to pass from cohomology to the dual of a homology module.  Finally, the following well-known lemma motivates our use of $\Lambda_S$-coefficients:

\begin{lemma}
\label{lem:lambdaS}
Let~$X$ be a connected CW-complex, and let~$\varphi \colon \pi_1(X \times S^1) \rightarrow \mathbb{Z}^\mu$ be a homomorphism.
\begin{enumerate}[font=\normalfont]
\item If the composition~$\pi_1(S^1) \rightarrow \pi_1(X \times S^1) \xrightarrow{\varphi}  \Z^\mu$ sends a generator of~$\pi_1(S^1)$ to a non-trivial element~$z$ of~$\Z^\mu$, then the chain complex~$C_*(X \times S^1;\Z[\Z^\mu][(z-1)^{-1}])$ is acyclic.
\item  If $z$ is a non-trivial element in the image of $\varphi$, then 
$H_0(X;\Z[\Z^\mu][(z-1)^{-1}])=0$.
\end{enumerate}
\end{lemma}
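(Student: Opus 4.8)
The plan is to prove both parts by the same device---after inverting $z-1$, multiplication by $z-1$ becomes an isomorphism---using an algebraic mapping torus for (1) and the coinvariant description of $H_0$ for (2). Throughout, write $\Lambda=\Z[\Z^\mu]$ for the ring introduced in Section~\ref{sub:twisted} and $\Lambda'=\Z[\Z^\mu][(z-1)^{-1}]$; since $\Lambda$ is an integral domain and $z\neq 0$, the element $z-1$ is nonzero and $\Lambda'$ is a flat localization of $\Lambda$. For part (1) I would give $S^1$ the CW-structure with a single $0$-cell and a single $1$-cell, give $X$ an arbitrary CW-structure, and equip $X\times S^1$ with the product cell structure. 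The cellular chain complex of the universal cover $\widetilde{S^1}=\R$, as a complex of $\Z[\pi_1(S^1)]$-modules, is the two-term complex $(\Z[s^{\pm1}]\xrightarrow{\,s-1\,}\Z[s^{\pm1}])$, where $s$ is the chosen generator of $\pi_1(S^1)$. Feeding this into the product cell structure of the cover of $X\times S^1$ associated with $\ker\varphi$ and using $\varphi(s)=z$, one identifies $C_*(X\times S^1;\Lambda)$ with the mapping cone of multiplication by $z-1$ (up to a unit $\pm z^{\pm1}$, which is irrelevant) on the complex $C_*(X;\Lambda)$ taken with the coefficient system $\pi_1(X)\hookrightarrow\pi_1(X\times S^1)\xrightarrow{\varphi}\Z^\mu$. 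Tensoring this identification over $\Lambda$ with the flat module $\Lambda'$ exhibits $C_*(X\times S^1;\Lambda')$ as the mapping cone of an isomorphism of chain complexes, hence as an acyclic complex. (Equivalently, one may phrase this step through the Wang sequence of the mapping torus $X\times S^1=T(\mathrm{id}_X)$ and then localize.)

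For part (2) I would use the standard identification of $H_0$ of a twisted chain complex with a module of coinvariants: for connected $X$ one has $H_0(X;\Lambda)\cong\Lambda/I$, where $I\subseteq\Lambda$ is the ideal generated by the elements $t^a-1$ as $a$ ranges over the image of the coefficient homomorphism on $\pi_1(X)$ (for disconnected $X$ one takes the corresponding direct sum over components). Since $H_0$ is a cokernel and $\Lambda'$ is flat over $\Lambda$, this gives $H_0(X;\Lambda')\cong\Lambda'/I\Lambda'$. As $z$ is a non-trivial element of that image, $z-1\in I$; but $z-1$ is a unit in $\Lambda'$, so $I\Lambda'=\Lambda'$ and $H_0(X;\Lambda')=0$.

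The only step that requires genuine care is the identification in part (1) of $C_*(X\times S^1;\Lambda)$ with the algebraic mapping cone of $z-1$ on $C_*(X;\Lambda)$: one must track the diagonal $\pi_1(X)\times\pi_1(S^1)$-action on the product cell structure of the $\ker\varphi$-cover and fix orientation and sign conventions so that the extra $S^1$-direction contributes precisely multiplication by $z-1$ (up to a unit). Everything else---flatness of localization, acyclicity of the mapping cone of a chain isomorphism, and the coinvariant formula for $H_0$---is routine.
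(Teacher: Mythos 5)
Your proof is correct. Note that the paper does not actually argue this lemma in-house: its ``proof'' consists of citing \cite[Example~2.7]{Ni} for part (1) and the standard computation of zeroth twisted homology \cite[Chapter~VI.3]{HS} for part (2). Your write-up supplies precisely the standard content behind those citations: for (1), the product cell structure identifies $C_*(X\times S^1;\Z[\Z^\mu])$ with the mapping cone of multiplication by $z-1$ on $C_*(X;\Z[\Z^\mu])$ (equivalently, one invokes the Wang sequence of the mapping torus), and after inverting $z-1$ this is the cone of a chain isomorphism, hence acyclic; for (2), $H_0$ is the module of coinvariants $\Lambda/I$ with $I$ generated by the elements $t^a-1$ for $a$ in the image of the coefficient homomorphism, and $z-1\in I$ becomes a unit after localization, so the (right-exact) base change kills $H_0$. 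Two small remarks. First, your phrase ``$z\neq 0$'' versus ``$z-1\neq 0$'' mixes additive and multiplicative notation for $\Z^\mu$; the relevant point, which you use correctly, is that $z$ is a non-trivial group element so $z-1$ is a non-zero element of the domain $\Z[\Z^\mu]$. Second, in part (2) the paper's statement is slightly loose ($\varphi$ is declared on $\pi_1(X\times S^1)$ while the conclusion concerns $H_0(X;\cdot)$); your reading, requiring $z$ to lie in the image of the coefficient homomorphism on $\pi_1(X)$ (componentwise if $X$ is disconnected), is the one consistent with how the lemma is applied to $X_L$ and $W_F$, and your proof establishes exactly that statement.
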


\begin{proof} A proof of the first statement can be found in \cite[Example 2.7]{Ni}), while a proof of the second statement follows from the standard calculation of 0-th twisted homology group, see e.g.\ \cite[Chapter~VI.3]{HS}.
\end{proof}

\subsection{Intersection forms}
\label{sub:intersection forms}

Given a compact connected oriented $n$-manifold $X$, a homomorphism $\varphi \colon \pi_1(X) \rightarrow \mathbb{Z}^\mu$ and a $(\Lambda_S,\Lambda)$-bimodule $M$, Poincar\'e duality defines isomorphisms of $\Lambda_S$-modules 
\begin{align*}
H_i(X,\partial X;M) &\cong H^{n-i}(X;M), \\
H_i(X;M) & \cong H^{n-i}(X,\partial X;M).
\end{align*}
We now consider the following sequence of $\Lambda_S$-homomorphisms
$$  \Theta \colon H_i(X;M) {\rightarrow} H_i(X,\partial W;M) \xrightarrow{\operatorname{PD}} H^{n-i}(X;M) \xrightarrow{\ev \circ \kappa} \overline{\Hom_{\Lambda_S}(H_{n-i}(X;M),\Lambda_S)}. $$
Here the first map is induced by the inclusion, the second is Poincar\'e duality, and the last was described in Subsection \ref{sub:twisted}. Assuming $X$ is $4$-dimensional, the \textit{intersection form} on $H_2(X;M)$ is defined as $\lambda(x,y)=\Theta(y)(x)$. More explicitly, if $M=\Lambda_S$ and if $a, b$ are elements of $H_2(X;M)$, then the twisted intersection form is given by $$\lambda(a,b)= \sum_{g\in \mathbb{Z}^{\mu}} s(a, t^g \, b) \, t^g \in \Lambda_S,$$
where $s$ denotes the ordinary intersection number on $\widetilde{X}$. Notice that $\lambda$ is hermitian and sesquilinear over~$\Lambda_S$, in the sense that~$\lambda(a,b)=\overline {\lambda(b,a)}$ and $\lambda(pa,qb)=p\lambda(a,b)\overline{q}$ for any~$a,b \in H_2(X;M)$ and~$p,q \in \Lambda_S.$

\subsection{The Blanchfield pairing of a colored link}
\label{sub:Blanchfield}

Let $L=L_1 \cup \dots \cup L_\mu$ be a colored link and let $X_L$ denote its exterior. Identifying $\mathbb{Z}^\mu$ with the free abelian group on $t_1,\dots,t_\mu$ and precomposing the epimorphism~$ H_1(X_L) \rightarrow \mathbb{Z}^\mu, \gamma \mapsto t_1^{\text{lk}(\gamma,L_1)}\cdots t_\mu^{\text{lk}(\gamma,L_\mu)}$ with abelianization gives rise to the \textit{Alexander module} $H_1(X_L;\Lambda_S)$ of $L$. 
Assume that~$H_1(X_L;\Lambda_S)$ is torsion over~$\Lambda_S$,  and denote by~$\Omega$ the following composition of $\Lambda_S$-homomorphisms:
\[ 
\begin{array}{rcl}
 H_1(X_L;\Lambda_S) \,\xrightarrow{\cong} \,H_1(X_L,\partial X_L;\Lambda_S) 
 \, \xrightarrow{\text{PD}}\, H^2(X_L;\Lambda_S) \,
 & \xrightarrow{\text{BS}^{-1}}&  H^1(X_L;Q/\Lambda_S) \\
 & \xrightarrow{\text{ev} \circ \kappa}& \overline{\Hom_{\Lambda_S}(H_1(X_L;\Lambda_S),Q/\Lambda_S)},
 \end{array}\]
where~$PD$ is Poincar\'e  duality and~$BS^{-1}$ is the inverse of the Bockstein homomorphism arising from the short exact sequence
~$$ 0 \rightarrow \Lambda_S \rightarrow Q \rightarrow Q/\Lambda_S \rightarrow 0$$
of coefficients. Note that in this step we implicitly used that $H_1(X_L;\Lambda_S)$ is $\Lambda_S$-torsion: indeed this assumption implies by Poincar\'e duality and an Euler characteristic argument that $H^1(X_L;Q)=H^2(X_L;Q)=0$, which in turn shows that $BS$ is invertible. It is a consequence of Lemma~\ref{lem:lambdaS} and the long exact sequence of the homology modules of the pair $(X_L,\partial X_L)$ with $\Lambda_S$-coefficients that the first homomorphism is an isomorphism. The second map is evidently an isomorphism and we already explained why the Bockstein map is invertible. Finally, the last map is also an isomorphism. This follows from the Universal Coefficient Spectral
 Sequence~\cite[Theorem~2.3]{Lev} and the fact, see Lemma~\ref{lem:lambdaS}~(2), that $H_0(X_L;\Lambda_S)=0$.

\begin{definition}
The \textit{Blanchfield pairing} of a colored link~$L$ with torsion Alexander module is the pairing
$$ \operatorname{Bl}(L) \colon  H_1(X_L;\Lambda_S) \times  H_1(X_L;\Lambda_S) \rightarrow Q/\Lambda_S ~$$
defined by~$\operatorname{Bl}(L)(a,b)=\Omega(b)(a).$
\end{definition}

It follows from the definitions that the Blanchfield pairing is sesquilinear over~$\Lambda_S$, in the sense that~$\operatorname{Bl}(L)(pa,qb)=p\operatorname{Bl}(L)(a,b)\overline{q}$ for any~$a,b \in H_1(X_L;\Lambda_S)$ and~$p,q \in \Lambda_S.$
Furthermore, the above discussion shows that this pairing is non-singular. 

\section{Pushed-in C-complexes}
\label{sec: Pushin}

In this section, we define the notion of a ``pushed-in C-complex" in the 4-ball $D^4$, study its exterior (Subsection \ref{sub:complement}) and compute its fundamental group (Subsection \ref{sub:fund}). Note that our approach differs slightly from the existing literature \cite{CF,CooperThesis}: instead of only pushing in the interiors of the Seifert surfaces, we also push in radially the corresponding sublinks. Moreover the different Seifert surfaces end up at different depths of the $4$-ball. 

\subsection{The complement of a pushed-in C-complex}
\label{sub:complement}
Let $S=S_1\cup \dots\cup S_\mu$ be a C-complex for a $\mu$-colored link $L$ and view $S^3$ as the boundary of $D^4$. For $i=1,\dots,\mu,$ we pick a tubular neighborhood $S_i\times [-2,2]$ of $S_i$ in $S^3$. Furthermore for each $i$, we fix two surfaces with boundary $S_i'$, $S_i''$ such that $S_i'\subset S_i\subset S_i''$, the complement $S_i''\sm \interior(S_i')$ is a union of small annuli around $L_i=\partial S_i$, and the respective unions $S'$ and $S''$ form C-complexes for links isotopic to $L$. Let us fix once and for all an embedding of $S^3\times [0,\mu]$ in $D^4$ such that $S^3\times \{0\}$ agrees with $S^3=\partial D^4$. In order to prevent the different tubular neighborhoods from getting mixed up, we denote the image of $(p,t)$ under this map by $p\star t$. For $i=1,\dots,\mu,$ we write 
\[ F_i\,:=\,L_i\star\big[0,i-\tmfrac{1}{4}\big]\cup_{L_i \star \lbrace i-\frac{1}{4} \rbrace} S_i\star \big\{i-\tmfrac{1}{4}\big\}\]
 and refer to 
$F:=F_1\cup \dots \cup F_\mu$
 as the \textit{push-in} of $S$. In other words, $F$ is obtained by pushing each sublink $L_i$ radially (at a different depth) into the 4-ball and then capping it off with $S_i$. Observe that the $F_i$ intersect pairwise in double points and consequently $F$ has boundary $L$ (in the sense of \cite[Section 6]{CF}). Since our goal is to study the exterior of $F$ in $D^4$, we define
$$ \nu F\,:=\, \lmcup{i=1}{\mu} \Big((\interior(S_i'')\sm S_i')\times(-1,1)\star [0,i) \,\,\cup \,\,S_i\times(-1,1)\star (i-\tmfrac{1}{2},i)\Big),$$
 and
$W_F:= D^4\sm \nu F.$
In order to compute the homology of $W_F$, we shall now decompose the latter space into more manageable pieces. First of all, denote by 
	\[ B\,\,:=\,\,D^4\,\sm\, \left(\smcup{i=1}{\mu}\,\interior(S_i'')\times (-1,1)\star[0,i) \right )\] 
 the complement of the whole trace of the push-in and observe that $B$ is homeomorphic to a 4-ball. In order to recover $W_F$ from $B$, we first set
	\[ \hspace{1cm} Y_i\,\,:=\,\,S_i'\sm \Big(\ttmcup{j\ne i}{} \interior(S_j'')\times (-2,2)\Big)\star[0,i-1]
	\,\cup\,
	S_i'\sm \Big(\ttmcup{j=i+1}{\mu}  \interior(S_j'')\times (-2,2)\Big)\star [i-1,i-\tmfrac{1}{2}]\]
for $i=1,\dots,\mu$. Observe that $Y_i$ is a closed subset of $S_i'\star [0, i-\tmfrac{1}{2}]$ which is homotopy equivalent to $S_i$. Moreover, making use of the neighborhoods of the $S_i$ in $S^3$, it makes sense to consider the closed sets $Y_i\times [-1,1] \subseteq D^4$. In the definition of the $Y_i$'s, we removed large enough neighborhoods of the clasps in order to make these sets disjoint. It remains to add the clasp parts. For $i<j$, we define the space
	\[ X_{ij}\,\,:=\,\,(S_i'\cap 	S_j')\, \star\, [0, i- \tmfrac{1}{2}]\]
which consists in a disjoint union $\bigsqcup X_{ij}^k$ of 2-disks, one for each clasp between $S_i$ and $S_j$. Using the slightly larger neighborhoods of $S_i$ and $S_j$ in $S^3$, we consider the cross-shaped subset of $[-2,2]\times [-2,2]$ given by
 $$K:=[-1,1]\times [-2,2] \cup [-2,2]\times [-1,1].$$
This way, the space $W_F$  decomposes as $B \cup Z$, where 
$$ Z\,:=\,\smsqcup{i=1}{\mu} Y_i\times[-1,1]\,\,\cup\,\, \smsqcup{i<j}{}X_{ij}\times K.$$
Observe that for each nice curve $\alpha$ in $S$ (recall Definition \ref{def:nice}), the push-offs $i^\varepsilon([\alpha])$ described in Section \ref{sub:Seifert} can be represented by curves $\alpha^\varepsilon$ which are embedded in the intersection of $S^3$ with $\bigcup_{i=1}^\mu Y_i \times \{\pm 1\} \subseteq \partial B$.

\subsection{The fundamental group of $W_F$}
\label{sub:fund}
Given a C-complex $S$, denote by $J$ the subset of $\{1,\dotsc , \mu\}^2$ consisting of pairs $(i,j)$ for which $i<j$ and there exists at least one clasp between the surfaces $S_i$ and $S_j$.

\begin{proposition}
\label{prop:pi1}
The fundamental group of $W_F$ admits the presentation
	\begin{equation*}
\langle a_1,\dotsc , a_{\mu} \, \vert \, [a_i,a_j]=e \:\text{ for all } (i,j) \in J \rangle, 
	\end{equation*}
	where the generators $a_1,\dotsc , a_{\mu}$ correspond to meridians for the surfaces $F_1,\dotsc , F_{\mu}$.
\end{proposition}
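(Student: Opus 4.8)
The plan is to compute $\pi_1(W_F)$ via van Kampen's theorem, using the decomposition $W_F = B \cup Z$ introduced in Subsection~\ref{sub:complement}, together with the further decomposition $Z = \bigsqcup_i Y_i \times [-1,1] \cup \bigsqcup_{i<j} X_{ij} \times K$. First I would record the homotopy types of the pieces: $B$ is a $4$-ball, hence simply connected; each $Y_i \times [-1,1]$ is homotopy equivalent to $S_i$, which (by the total-connectedness hypothesis, or at least connectedness of each $S_i$ — actually only connectedness is needed here) is path-connected; and each component $X_{ij}^k$ of $X_{ij} \times K$ is contractible. The intersection $B \cap Z$ deformation retracts onto a neighborhood of the push-in inside $\partial B$; concretely, for each $i$ the piece $Y_i \times [-1,1]$ meets $B$ in $Y_i \times \{-1,1\}$ (two disjoint copies of something homotopy equivalent to $S_i$), and each clasp piece $X_{ij}^k \times K$ meets $B$ in $X_{ij}^k \times (K \cap (\{\pm 2\}\times[-2,2] \cup [-2,2]\times\{\pm 2\}))$, i.e. four contractible strips.

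The key point is to identify the normal generators. Attaching $Y_i \times [-1,1]$ to $B$ along the two disjoint copies $Y_i \times \{\pm 1\}$ amounts, up to homotopy, to attaching a $1$-handle: it adds one free generator $a_i$ to $\pi_1$, which can be taken to be a meridian of the pushed-in surface $F_i$ (a loop passing once through the "slab" $Y_i \times [-1,1]$). No relation is introduced at this stage because both copies $Y_i \times \{\pm1\}$ sit inside the simply connected $B$. After attaching all the $Y_i$-slabs, $\pi_1$ of the resulting space is the free group $\langle a_1,\dots,a_\mu\rangle$. Now I attach the clasp pieces. For $(i,j)\in J$ and each clasp $k$ between $S_i$ and $S_j$, the piece $X_{ij}^k \times K$ is contractible and is glued along four contractible strips, one in each of $Y_i\times\{-1\}$, $Y_i\times\{1\}$, $Y_j\times\{-1\}$, $Y_j\times\{1\}$ — equivalently, along the boundary of the cross $K$. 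Since $K$ is itself contractible but its "boundary arms" sit in four different slabs, gluing in $X_{ij}^k \times K$ imposes exactly the relation that the loop going "out along the $i$-arms and back along the $j$-arms" is null-homotopic; tracing this loop identifies it with the commutator $[a_i,a_j]$. Crucially, different clasps between the same pair $(i,j)$ yield the \emph{same} relation $[a_i,a_j]=e$ (the loops are homotopic in $Y_i\times[-1,1]\cup Y_j\times[-1,1]$ already, since each $S_i$ is connected), so only one relation per element of $J$ survives. Assembling all the van Kampen data gives precisely the presentation $\langle a_1,\dots,a_\mu \mid [a_i,a_j]=e \text{ for } (i,j)\in J\rangle$.

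I expect the main obstacle to be the careful bookkeeping showing that gluing $X_{ij}^k\times K$ contributes exactly the single relation $[a_i,a_j]=e$ and nothing more. This requires (a) checking that the four gluing strips are connected and contractible so that no new generators appear, (b) choosing base points and paths consistently across the four slabs so that the attaching loop of $\pi_1(\partial(X_{ij}^k\times K))$ is genuinely sent to $[a_i,a_j]$ rather than to a conjugate or an inverse, and (c) verifying that distinct clasps (and the geometry of how arcs run along $S_i$) do not produce extra or inequivalent relations. This is essentially a local model computation near a single clasp, cross-shaped region $K$, combined with the observation that everything outside the slabs lives in the simply connected ball $B$. A clean way to organize (b)–(c) is to build $W_F$ up one piece at a time — first all slabs, then clasps one by one — applying van Kampen at each step with connected intersections, so that the Seifert–van Kampen hypotheses are transparently satisfied and the relations can be read off directly from the local picture near each clasp.
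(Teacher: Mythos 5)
Your proposal is correct and follows essentially the same route as the paper: the same decomposition $W_F = B \cup Z$ with $Z = \bigsqcup_i Y_i\times[-1,1] \cup \bigsqcup_{i<j} X_{ij}\times K$, where attaching each slab to the simply connected $B$ adds one free (meridional) generator $a_i$ with no relations, and gluing each contractible clasp piece $X_{ij}^k\times K$ along $X_{ij}^k\times\partial K$ (homotopy equivalent to a circle whose generator maps, up to inversion/conjugation, to $[a_i,a_j]$) adds exactly the commutator relations. The paper's proof is the same van Kampen induction, so no further comparison is needed.
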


\begin{proof} 
Recall from Subsection \ref{sub:complement} that $W_F=B \cup Z$, where $B$ is contractible and $Z=\bigsqcup_{i=1}^\mu Y_i\times[-1,1]\,\,\cup\,\, \bigsqcup_{i<j}X_{ij}\times K$. Observe that gluing $Y_1\times [-1,1]$ to $B$ is homotopically the same as identifying $Y_1\times \{-1\}$ with $Y_1\times \{1\}$ so that
	\begin{equation*}
	\pi_1(B\cup (Y_1\times [-1,1]))\cong \langle a_1 \, \vert \, a_1 \cdot 1 \cdot a_1^{-1}=1\rangle = \langle a_1 \rangle.
	\end{equation*}
	The generator $a_1$ is a meridian for the surface $F_1$.
Gluing successively $Y_2\times [-1,1],\dotsc , Y_{\mu}\times [-1,1]$ and repeating the argument adds one new generator $a_i$ for each $i$, namely the meridian of the surface $F_i$. As each inclusion induced map $\pi_1(Y_i\times \{\pm 1\}) \rightarrow \pi_1(Y_i \times [-1,1])$ factors through the trivial group $\pi_1(B)$, no relations are added and thus
$$ 	\pi_1\bigg(B\cup \smsqcup{i=1}{\mu} Y_i\times[-1,1]\bigg)\,\,\cong \,\, \langle a_1,\dotsc , a_{\mu}  \rangle. $$
In order to recover $W_F$, it remains to glue back in  the  contractible ``clasp parts'' $X_{ij}^k\times K$ (recall that $X_{ij}=\bigsqcup_k X_{ij}^k$), which are only non-empty when $(i,j)\in J$.  Note that $X_{ij}^k\times K$ and $B\cup \bigsqcup_{i=1}^{\mu} Y_i\times[-1,1]$  intersect in  $P:= X_{ij}^k \times \partial K$ which is homotopy equivalent to a circle. Moreover, under the inclusion map of $P$ into $B\cup \bigsqcup_{i=1}^{\mu} Y_i\times[-1,1]$, a generator of $\pi_1(P)$ is sent (up to inversion) to a commutator of the form $[a_i,a_j]$. Hence, by Van Kampen's theorem, one gets
	\begin{equation*} 
	\pi_1 \bigg( B \cup \smsqcup{i=1}{\mu} Y_i\times[-1,1] \cup (X_{ij}^k \times K)\bigg) \,\,\cong \,\,\langle a_1,\dotsc , a_{\mu}  \, \vert \, [a_i,a_j] \rangle .
	\end{equation*}	
Repeating the process for each $X_{ij}^k$ immediately yields the proposition.
\end{proof}

Let $p \colon \overline{W}_F \to W_F$ be the cover of $W_F$ corresponding to the kernel of the abelianization map $\pi_1(W_F) \rightarrow H_1(W_F)$ (see Figure \ref{fig:cover} for a schematic picture). Recall from the introduction that a C-complex $S$ is \textit{totally connected} if each $S_i$ is connected and $S_i \cap S_j  \neq \emptyset $ for all $i\neq j$. Proposition \ref{prop:pi1} implies both that the deck transformation group of $\overline{W}_F$ is free abelian of rank $\mu$ and the following result.

\begin{corollary}
\label{cor:H10}
If the C-complex $S$ is totally connected, then $H_1(W_F;\Lambda_S)$ vanishes.
\end{corollary}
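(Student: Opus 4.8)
The plan is to deduce Corollary~\ref{cor:H10} directly from the presentation of $\pi_1(W_F)$ obtained in Proposition~\ref{prop:pi1}, together with part~(2) of Lemma~\ref{lem:lambdaS}. First I would observe that when $S$ is totally connected, the set $J$ equals the set of \emph{all} pairs $(i,j)$ with $i<j$, so Proposition~\ref{prop:pi1} gives
$$\pi_1(W_F)\cong\langle a_1,\dots,a_\mu\mid [a_i,a_j]=e\text{ for all }i<j\rangle\cong\Z^\mu.$$
Moreover the composition $\pi_1(W_F)\to H_1(W_F)\to\Z^\mu$ defining the coefficient system is, under this identification, the identity on $\Z^\mu$ (each $a_i$ is a meridian of $L_i$ and maps to $t_i$), hence an \emph{isomorphism}; in particular the map $\varphi$ is surjective.

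The key point is then a chain-level computation of $H_1$ and $H_0$. Since $\pi_1(W_F)\cong\Z^\mu$ with $\varphi$ an isomorphism, the cover $\overline{W}_F\to W_F$ is the universal cover, and $\overline{W}_F$ is simply connected. Thus $H_1(W_F;\Lambda)\cong H_1(\overline{W}_F;\Z)=0$, and since localization is flat we get $H_1(W_F;\Lambda_S)=H_1(W_F;\Lambda)\otimes_\Lambda\Lambda_S=0$. Alternatively, and perhaps more in the spirit of the surrounding text, one can argue without invoking asphericity: build $W_F$ up to homotopy from a wedge of $\mu$ circles by attaching one $2$-cell for each commutator relation $[a_i,a_j]$, write down the resulting cellular chain complex of the cover over $\Lambda$, and compute $H_1$ of this complex directly. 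The Fox-derivative boundary $\partial_2$ of the relator $[a_i,a_j]$ contributes the familiar column with entries $\pm(1-t_j)$ in row $i$ and $\pm(1-t_i)$ in row $j$; after inverting $(1-t_k)$ for all $k$ in passing to $\Lambda_S$, these columns generate the augmentation-type kernel of $\partial_1$, forcing $H_1(W_F;\Lambda_S)=0$. Either route works; the asphericity argument is cleaner, so I would use it and only mention the chain-level alternative in a remark.

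I do not expect a serious obstacle here: the whole statement is essentially bookkeeping once Proposition~\ref{prop:pi1} is in hand. The only point requiring a little care is making the identification of the coefficient homomorphism precise — i.e.\ checking that under the presentation $\langle a_1,\dots,a_\mu\mid\dots\rangle$ the generator $a_i$ really does map to $t_i$ and not to some other element of $\Z^\mu$. This follows from the construction in Subsection~\ref{sub:complement}: $a_i$ is represented by a small meridional loop around the component $F_i$, which links $L_i$ once and links $L_j$ zero times for $j\neq i$, so its image is $t_i$ by definition of the Alexander coefficient system. With that settled, the corollary is immediate. (One could also phrase the argument so that it does not even need $H_1=0$ to be upgraded via flatness: simply note $H_1(W_F;\Lambda_S)=H_1(\overline{W}_F;\Z)\otimes_\Lambda\Lambda_S$ and $H_1(\overline{W}_F;\Z)=0$.)
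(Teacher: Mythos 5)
Your proposal is correct and is essentially the paper's own argument: total connectedness makes $J$ all pairs, so Proposition~\ref{prop:pi1} gives $\pi_1(W_F)\cong\Z^\mu$, hence the abelianization cover $\overline{W}_F$ is the universal cover, $H_1(W_F;\Lambda)\cong H_1(\overline{W}_F;\Z)=0$, and flatness of $\Lambda_S$ over $\Lambda$ finishes the job. The Fox-calculus alternative and the check that $a_i\mapsto t_i$ are fine but unnecessary, since the vanishing only uses that $\pi_1(W_F)$ is abelian (so the kernel of abelianization is trivial), and note that your final parenthetical still uses flatness, as the identification $H_1(W_F;\Lambda_S)=H_1(W_F;\Lambda)\otimes_\Lambda\Lambda_S$ is itself the flatness statement.
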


\begin{proof}
If each pair of surfaces in $S$ is joined by at least one clasp, then it follows from Proposition~\ref{prop:pi1} that $\pi_1(W_F)$ is the free-abelian group on $t_1,\dots,t_\mu$. This implies that $H_1(W_F;\Lambda)=0$, which by flatness of $\Lambda_S$ also implies that $H_1(W_F;\Lambda_S)=0$.
\end{proof}

\section{The twisted intersection forms of C-complex exteriors in the 4-ball.}
\label{sec:intform}

In this section, we describe an explicit isomorphism $H_1(S) \otimes \Lambda_S\to H_2(W_F;\Lambda_S)$  (Subsection \ref{sub:H2}) and compute the corresponding intersection form (Subsection \ref{sub:intform}).

\subsection{A geometric basis for $H_2(W_F;\Lambda_S)$}
\label{sub:H2}

The goal of this subsection is to provide a convenient basis for the $\Lambda_S$-module $H_2(W_F;\L_S)$. In order to state the main result, we briefly introduce some notation. Given an arbitrary lift $\overline{B}$ of $B$ to $\overline{W}_F$ and a nice curve $\alpha$ on $S$, lift each push-off $\alpha^\varepsilon$ to $\overline{B}$, and call it $\overline{\alpha}^\varepsilon \subset \overline{B}$. Also, set
\begin{equation*}
\sgn{\varepsilon}:= \prod_{i=1}^\mu \varepsilon_i\in \{\pm 1\}, \:\:\:\:\:\:\:\:\:\:\: 
\pmb{1}:=(1,\dotsc ,1)\in \mathbb{Z}^\mu,
\end{equation*}
and notice that the following equality holds:
\begin{equation}\label{eq:epsilonformula}
\sum_{\varepsilon}\sgn \varepsilon \: t^{\frac{\pmb{1}+\varepsilon}{2}}= \prod_{i=1}^\mu (t_i-1).
\end{equation}
Next, using Lemma \ref{lem:nice-basis}, we fix once and for all a basis $\mathcal{B}$ for $H_1(S)$ so that each element of $\mathcal{B}$ is represented by a nice curve, resulting in a set $\textbf{B}$ of representatives. The remainder of this subsection will be devoted to the proof of the following result. 
\begin{proposition}
\label{prop:phi}
For each nice curve $\alpha$ on $S$, there is a closed surface $\Phi_\alpha$ embedded in $\overline{W}_F$, which intersects $p^{-1}(B\cap Z)$ in the curve $ \sum_\varepsilon \sgn(\varepsilon)t^{\frac{\pmb{1}+\varepsilon}{2}}\overline{\alpha}^\varepsilon$. The map
\[
\Phi\colon H_1(S)\otimes \Lambda_S \to H_2(W_F; \Lambda_S) 
\]
defined on the elements of $\mathbf{B}$ as $[\alpha]\otimes 1 \mapsto [\Phi_\alpha]$ is an isomorphism
of $\Lambda_S$-modules.
\end{proposition}

\subsubsection{The construction of the surfaces $\Phi_{\alpha}$}
\label{subsub:phi}
Given a nice cycle $\alpha \subset S$, there is a Seifert surface for each $\alpha^{\varepsilon}$ (viewed as an oriented knot) in $\partial B\cong S^3$. Pushing the interior of these surfaces inside $B$ provides properly embedded surfaces $S_\alpha^\varepsilon$ in $B$ whose boundary is $\alpha^{\varepsilon}$. Lifting these surfaces to $\overline{W}_F$ as subsets $\overline{S}_\alpha^\varepsilon$ of the fixed lift $\overline{B}$ of $B$, one has $$\partial (t^{\frac{\pmb{1}+\varepsilon}{2}}\overline{S}_{\alpha}^{\varepsilon})= t^{\frac{\pmb{1}+\varepsilon}{2}}\overline{\alpha}^{\varepsilon}.$$
In order to build a closed surface from all of these disjoint $t^{\frac{\pmb{1}+\varepsilon}{2}} \overline{S}_\alpha^\varepsilon$, decompose $\alpha$ as
\begin{equation} 
\label{eq:curvedec}
	\alpha\,\,=\,\, \bigg(\smcup{i=1}{\mu} \alpha_i \bigg) \,\,\cup \,\, \bigg( \smcup{i<j}{}\, \smcup{k=1}{c(i,j)}\, \alpha_{ij}^k \bigg),
\end{equation}
where $\alpha_{i}$ is the (possibly empty) subset of $\alpha$ which lies in $S_i \setminus \bigcup_{j \neq i}{S_j}$ and $\alpha_{ij}^k$ is the (possibly empty) subset of $\alpha$ corresponding to the clasp indexed by the triple $(i,j,k)$. One can perform analogous decompositions for each push-off $\alpha^\varepsilon$ yielding segments $\alpha_i^\varepsilon$ and $\alpha_{ij}^{k,\varepsilon}$. Given two sequences $\varepsilon, \varepsilon'$ which differ only at the index $j$, say with $\varepsilon_j=-1$ and $\varepsilon'_j=+1$, connect the two surfaces $t^{\frac{\pmb{1}+\varepsilon}{2}}\overline{S}_{\alpha}^{\varepsilon}$ and  $-t^{\frac{\pmb{1}+\varepsilon'}{2}}\overline{S}_{\alpha}^{\varepsilon'}$ by adding a cylinder $t^{\frac{\pmb{1}+\varepsilon}{2}}\overline{\alpha}_j\times [-1,1]\subseteq p^{-1}(Y_j\times[-1,1])$. Repeating this process for all $\varepsilon$, $\varepsilon'$ as above, we can now set
$$ S_\alpha\,\,:=\,\,\lmcup{\varepsilon}{} \bigg( \sgn(\varepsilon) t^{\frac{\pmb{1}+\varepsilon}{2}}\overline{S}_\alpha^\varepsilon \,\,\cup\,\, \smcup{ \lbrace j\, \vert \, \varepsilon_j=-1 \rbrace}{}\sgn(\varepsilon) t^{\frac{\pmb{1}+\varepsilon}{2}} \overline{\alpha}_j \times [-1,1] \bigg), $$
where the sign $\sgn(\varepsilon)=\varepsilon_1\cdots\varepsilon_{\mu}$ is added for the orientations to be consistent. We see from its construction that $S_\alpha$ is a surface whose boundary lies in the boundary of the union of (lifts of) the topological 4-balls $X_{ij}^k\times K$. Consequently, one can find a surface $S^{\clasp}$ whose components lie in those 4-balls and such that $\partial S_{\alpha} ^{\clasp} = - \partial S_{\alpha}$.
We can hence define a closed surface
		$$ \Phi_\alpha :=S_ \alpha \cup_\partial S_\alpha^{\clasp}.$$
Using our set $\mathbf{B}$ of nice representatives 
for the basis $\mathcal{B}$ of $H_1(S)$, we can now define a $\Lambda_S$-linear map $$ \Phi\colon H_1(S)\otimes \Lambda_S \to H_2(W_F; \Lambda_S)$$ by $[\alpha]\otimes 1 \mapsto [\Phi_{\alpha}]$ for $\alpha\in\mathbf{B}$. From the construction of $\Phi$, we get the following formula.

	\begin{proposition}
	\label{prop:intersects}
Let $\partial \colon H_2(W_F;\Lambda_S) \rightarrow H_1(B \cap Z;\Lambda_S)$ be the boundary map in the Mayer-Vietoris sequence of $W_F=B \cup Z$. Then
\begin{equation}
\label{eq:intersects}
\partial \Phi ([\alpha]\otimes 1)= \sum_\varepsilon \sgn(\varepsilon)t^{\frac{\pmb{1}+\varepsilon}{2}}[{\overline\alpha}^\varepsilon]
\end{equation} 
for each nice cycle $\alpha$.
\end{proposition}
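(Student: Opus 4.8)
The plan is to read off the Mayer--Vietoris connecting homomorphism $\partial$ directly from the explicit description of the surface $\Phi_\alpha$ constructed just above; no new geometric input is needed. Recall the standard description of $\partial$ in the Mayer--Vietoris sequence of $W_F=B\cup Z$ (with $\Lambda_S$-coefficients): if a class in $H_2(W_F;\Lambda_S)$ is represented by a $2$-cycle that decomposes as $c_B+c_Z$ with $c_B$ a chain supported in $p^{-1}(B)$ and $c_Z$ a chain supported in $p^{-1}(Z)$, then $\partial c_B=-\partial c_Z$ is a $1$-cycle supported in $p^{-1}(B\cap Z)$, and its class in $H_1(B\cap Z;\Lambda_S)$ is the image of our class under $\partial$. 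So everything reduces to exhibiting such a decomposition of $\Phi_\alpha$ and differentiating the $B$-part.

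By the construction of $\Phi_\alpha=S_\alpha\cup S_\alpha^{\clasp}$ we have
\[
S_\alpha=\bigcup_{\varepsilon}\Big(\sgn(\varepsilon)\,t^{\frac{\pmb{1}+\varepsilon}{2}}\,\overline{S}_\alpha^{\varepsilon}\ \cup\ \bigcup_{\{j\,:\,\varepsilon_j=-1\}}\sgn(\varepsilon)\,t^{\frac{\pmb{1}+\varepsilon}{2}}\,\overline{\alpha}_j\times[-1,1]\Big),
\]
where each $\overline{S}_\alpha^{\varepsilon}\subseteq\overline{B}\subseteq p^{-1}(B)$ is a lift of a surface pushed into the $4$-ball $B$ with $\partial\big(t^{\frac{\pmb{1}+\varepsilon}{2}}\overline{S}_\alpha^{\varepsilon}\big)=t^{\frac{\pmb{1}+\varepsilon}{2}}\overline{\alpha}^{\varepsilon}$, each connecting cylinder lies in some $p^{-1}(Y_j\times[-1,1])\subseteq p^{-1}(Z)$, and $S_\alpha^{\clasp}$ lies in the lifts of the $4$-balls $X_{ij}^{k}\times K\subseteq Z$. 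I would therefore take
\[
c_B=\sum_{\varepsilon}\sgn(\varepsilon)\,t^{\frac{\pmb{1}+\varepsilon}{2}}\,\overline{S}_\alpha^{\varepsilon},
\]
a chain supported in $p^{-1}(B)$, and let $c_Z$ be the complementary chain of $\Phi_\alpha$, namely the connecting cylinders together with $S_\alpha^{\clasp}$, which is supported in $p^{-1}(Z)$; then $c_B+c_Z$ represents $\Phi([\alpha]\otimes 1)=[\Phi_\alpha]$.

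Now the computation is immediate. Since translations and the boundary operator commute and $\partial$ is $\Lambda_S$-linear on chains,
\[
\partial c_B=\sum_{\varepsilon}\sgn(\varepsilon)\,\partial\big(t^{\frac{\pmb{1}+\varepsilon}{2}}\overline{S}_\alpha^{\varepsilon}\big)=\sum_{\varepsilon}\sgn(\varepsilon)\,t^{\frac{\pmb{1}+\varepsilon}{2}}\,\overline{\alpha}^{\varepsilon}.
\]
To apply the recipe I need this $1$-cycle to lie in $p^{-1}(B\cap Z)$; this holds because, as arranged at the end of Subsection~\ref{sub:complement}, every push-off $\alpha^{\varepsilon}$ may be taken inside $\partial B$ and inside $\bigcup_i Y_i\times\{\pm1\}$ (with its clasp portions in the $X_{ij}^{k}\times\partial K$), so $\alpha^{\varepsilon}\subseteq B\cap Z$ and hence $\overline{\alpha}^{\varepsilon}\subseteq p^{-1}(B\cap Z)$ --- consistently with the fact that, $\Phi_\alpha$ being closed, $\partial c_B=-\partial c_Z$ is automatically supported on the overlap $B\cap Z$. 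This yields $\partial\Phi([\alpha]\otimes 1)=\sum_{\varepsilon}\sgn(\varepsilon)\,t^{\frac{\pmb{1}+\varepsilon}{2}}[\overline{\alpha}^{\varepsilon}]$ for $\alpha\in\mathbf{B}$, and the identity for an arbitrary nice cycle $\alpha$ then follows by expanding $[\alpha]=\sum_\beta n_\beta[\beta]$ over the basis $\mathcal{B}$: both sides are additive in this expansion --- the left side by linearity of $\Phi$ and $\partial$, the right side because $i^{\varepsilon}\colon H_1(S)\to H_1(S^3\setminus S)$ is a homomorphism, so $[\overline{\alpha}^{\varepsilon}]=\sum_\beta n_\beta[\overline{\beta}^{\varepsilon}]$ in $H_1(B\cap Z;\Lambda_S)$.

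The one thing that will need care is not this bit of algebra but the inputs it relies on: one needs the triad $(W_F;B,Z)$ to be excisive, so that the twisted Mayer--Vietoris sequence is available, and --- more importantly --- one needs the decomposition $\Phi_\alpha=c_B+c_Z$ to have $c_B$ and $c_Z$ genuinely supported in $p^{-1}(B)$ and $p^{-1}(Z)$ respectively, which rests on the careful placement of the curves $\alpha^{\varepsilon}$ and of the ends of the connecting cylinders inside $B\cap Z$ carried out in Subsection~\ref{sub:complement} and in the construction of $\Phi_\alpha$. One also uses implicitly that $\alpha^{\varepsilon}=i^{\varepsilon}(\alpha)$ has zero linking number with each component of $L$ --- which is built into the definition of $i^{\varepsilon}$ in Subsection~\ref{sub:Seifert}, since it takes values in $H_1(S^3\setminus S)$ --- so that $\alpha^{\varepsilon}$ lifts to a closed loop $\overline{\alpha}^{\varepsilon}$ in $\overline{W}_F$ in the first place.
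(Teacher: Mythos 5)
Your argument is correct and is essentially the paper's own (implicit) proof: the paper presents Proposition~\ref{prop:intersects} as an immediate consequence of the construction of $\Phi_\alpha$, and your decomposition of $\Phi_\alpha$ into the $B$-chain $\sum_\varepsilon \sgn(\varepsilon)t^{\frac{\pmb{1}+\varepsilon}{2}}\overline{S}_\alpha^\varepsilon$ and the $Z$-chain (the connecting cylinders together with $S_\alpha^{\clasp}$), followed by reading off the Mayer--Vietoris boundary as $\partial c_B$, is exactly the content of that construction. The only imprecision is in your peripheral justifications: the closed lift $\overline{\alpha}^\varepsilon$ exists simply because $\alpha^\varepsilon\subset B$ and the cover is trivial over the simply connected $B$ (linking numbers with $L$ are not the relevant criterion), and the additivity $[\overline{\alpha}^\varepsilon]=\sum_\beta n_\beta[\overline{\beta}^\varepsilon]$ must be argued in the finer module $H_1(B\cap Z;\Lambda_S)$ (e.g.\ by noting the $\varepsilon$-push-off is induced by a map of a model of $S$ into $B\cap Z$, sending each $S_i$ to $Y_i\times\{\varepsilon_i\}$ and each clasp arc through the corresponding contractible piece of $B\cap Z_2$), not merely from the fact that $i^\varepsilon$ is a homomorphism into the coarser group $H_1(S^3\setminus S)$ --- neither point affects the main computation.
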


Our goal is now to prove that $\Phi$ is an isomorphism.

\subsubsection{Reducing the problem to a commutativity statement}
\label{subsub:reduc}
Recall from Subsection \ref{sub:complement} that we decomposed $W_F$ into $B \cup Z$, where $Z:=\bigsqcup_{i=1}^\mu Y_i\times[-1,1]\,\,\cup\,\, \bigsqcup_{i<j}X_{ij}\times K$.  We now set 
\begin{figure} [b]
	\labellist\small\hair 2.5pt
	\pinlabel {$S_j$} at 78 57
	\pinlabel {$S_i$} at 82 125
	\pinlabel {$X_{ij}^k\times [-2,2]\times [-2,2]$} at  135 10
	\pinlabel {$Y_i\times [-1,1]$} at  205 28
	\pinlabel {$Y_j\times [-1,1]$} at  192 167
	\pinlabel {$Z_2$} at  413 80
	\pinlabel {$Z_1$} at  400 110
	\pinlabel {$B$} at  400 140
	\endlabellist
	\centering
	\includegraphics{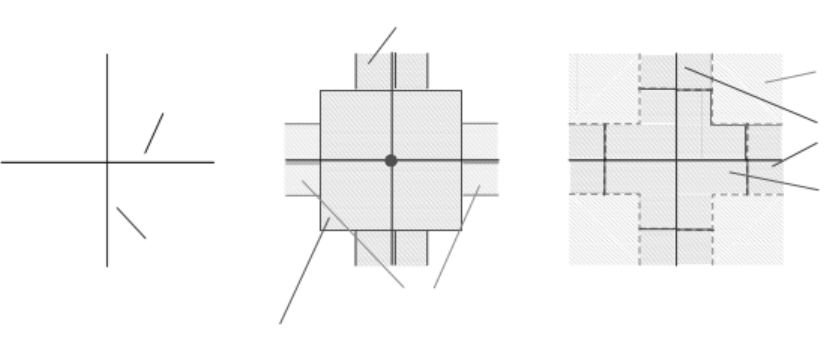}
	\caption{A dimensionally reduced sketch of $Z_1$ and $Z_2$ around a clasp.}
\end{figure}
$$Z_1\,:=\,\smsqcup{i=1}{\mu} Y_i \times [-1,1], \ \ \ \ Z_2\,=\,\smsqcup{i <j }{} X_{ij} \times K$$
so that three applications of the Mayer-Vietoris exact sequence (together with the fact that $Z_2$ is made of contractible components) produce the following commutative diagram:
	\begin{equation}\label{eq:bigdiagram} \xymatrix@C0.4cm@R0.1cm 
	{
		&&
		0\ar[dd]  & 0\ar[dd]&\\
		&&\mbox{\phantom{a}}\\
		&& 
		H_1(B \cap Z_1; \Lambda_S)\ar[dd]^a\ar[r]^\psi   & 
		H_1(Z_1; \Lambda_S) \ar[dd]&\\
		&&\mbox{\phantom{a}}\\
		0\ar[r]&H_2(W_F;\L_S)\ar[r]^\partial&H_{1}(B\cap Z;\L_S)\ar[r]^{\varphi}\ar[dd]^b& H_{1} (Z;\L_S) \ar[dd]
		\\
		&&\mbox{\phantom{a}}\\
		&&
		H_0((B \cap Z_1) \cap (B \cap Z_2); \Lambda_S)\ar[r]^{\ \ \ \ \ \ \ \  L}\ar[dd]^M   & H_0(Z_1 \cap Z_2; \Lambda_S)\ar[dd]^N&\\
		&&\mbox{\phantom{a}}\\
		&& 
		H_0(B \cap Z_1; \Lambda_S) \oplus H_0 (B \cap Z_2; \Lambda_S) \ar[r] & H_0( Z_1; \Lambda_S) \oplus H_0 (Z_2; \Lambda_S).    \\
	}
	\end{equation}

The next lemma provides a first step towards the understanding of $\ker(\varphi)$.

	\begin{lemma}
	\label{lem:snake}
The sequence 
\begin{equation}\label{shortex}
0\to \ker\psi \to \ker \varphi \to \ker L \cap \ker M \to 0
\end{equation}
of $\Lambda_S$-modules is exact.
	\end{lemma}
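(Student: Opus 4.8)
The plan is to extract the short exact sequence \eqref{shortex} by a diagram chase on the big commutative diagram \eqref{eq:bigdiagram}, treating it essentially as an instance of the snake lemma applied to the middle rectangle. First I would observe that the two columns through $\varphi$ — the one running $H_1(B\cap Z_1) \to H_1(B\cap Z) \to H_0((B\cap Z_1)\cap(B\cap Z_2))$ and the one running $H_1(Z_1)\to H_1(Z) \to H_0(Z_1\cap Z_2)$ — are exact; this is the content of the three Mayer–Vietoris sequences invoked just before the diagram (for the first column one uses that $B\cap Z = (B\cap Z_1)\cup(B\cap Z_2)$, and that $B\cap Z_2$, hence also $(B\cap Z_1)\cap(B\cap Z_2)$, has contractible components so the relevant $H_1$ and $H_2$ terms vanish; similarly for the second column using that $Z_2$ has contractible components). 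Likewise the top horizontal map $\psi$ sits in a Mayer–Vietoris sequence and the $0$'s at the very top record that $H_2$ of the relevant ambient spaces inject appropriately, so that $\ker\psi$ really is the kernel of the map labelled $\psi$ in the displayed position.

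Next, restrict attention to the commuting square with vertical maps $a$ and $b$ (from $H_1(B\cap Z_1)$ and $H_1(B\cap Z)$ down to $H_1(Z_1)$ and $H_0((B\cap Z_1)\cap(B\cap Z_2))$ respectively) — more precisely, to the two short rows
\[
0 \to H_1(B\cap Z_1) \xrightarrow{a} H_1(B\cap Z) \xrightarrow{b} H_0\big((B\cap Z_1)\cap(B\cap Z_2)\big)
\]
sitting above the analogous row for the $Z_i$'s, connected by $\psi$, $\varphi$ and $L$. The maps $\psi$, $\varphi$, $L$ form a morphism of these (left-exact) sequences, so the snake lemma — or rather an elementary direct chase, since we only need the kernel part — yields exactness of
\[
0 \to \ker\psi \to \ker\varphi \to \ker L.
\]
To identify the image of $\ker\varphi$ inside $\ker L$ with $\ker L \cap \ker M$, I would chase as follows: if $x\in\ker\varphi\subseteq H_1(B\cap Z)$, then $b(x)\in H_0((B\cap Z_1)\cap(B\cap Z_2))$ lies in $\ker L$ by commutativity of the square containing $\varphi$, $b$, $L$, and it lies in $\ker M$ because $M\circ b$ is the composite of two consecutive maps in the exact column through $b$ and $M$ (namely $H_1(B\cap Z) \xrightarrow{b} H_0((B\cap Z_1)\cap(B\cap Z_2)) \xrightarrow{M} H_0(B\cap Z_1)\oplus H_0(B\cap Z_2)$), hence is zero. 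Conversely, given $y\in\ker L\cap\ker M$, exactness of that same column at the $H_0((B\cap Z_1)\cap(B\cap Z_2))$ spot (i.e. $\ker M = \operatorname{im} b$) produces $x\in H_1(B\cap Z)$ with $b(x)=y$; then $\varphi(x)$ has trivial image under $H_1(Z)\to H_0(Z_1\cap Z_2)$ because that image is $L(y)=0$, so $\varphi(x)=\psi(z)$ for some $z$ by exactness of the second column, and subtracting (a lift of) $z$ via $a$ adjusts $x$ to an element of $\ker\varphi$ mapping to $y$. This shows $\ker\varphi \twoheadrightarrow \ker L\cap\ker M$ with kernel $\ker\psi$, giving the asserted short exact sequence \eqref{shortex}.

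The only real subtlety — and the step I would be most careful about — is making sure every edge in \eqref{eq:bigdiagram} that I use as part of an ``exact sequence'' genuinely is exact in the stated position, since the diagram is assembled from three \emph{different} Mayer–Vietoris sequences (for $W_F=B\cup Z$, for $B\cap Z=(B\cap Z_1)\cup(B\cap Z_2)$, and for $Z=Z_1\cup Z_2$) glued along shared terms, and one must check that the shared terms are labelled consistently and that the $H_2$/$H_0$ vanishing (coming from $B$ being a ball and $Z_2$ having contractible components, as recorded by the $0$'s in the diagram) is exactly what is needed. Once the exactness of the two vertical columns through $\varphi$ and of the top map $\psi$ is pinned down, the chase above is routine and purely formal; the content is entirely in the topology that went into producing \eqref{eq:bigdiagram}, which has already been set up.
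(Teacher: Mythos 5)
Your overall route is the same as the paper's: restrict diagram \eqref{eq:bigdiagram} to the two exact columns through $\varphi$ (exact because $B\cap Z_2$, $Z_1\cap Z_2$ and $(B\cap Z_1)\cap(B\cap Z_2)$ have contractible components) and run the kernel part of the snake lemma. The paper does this literally: it applies the snake lemma to the morphism of short exact sequences $0\to H_1(B\cap Z_1;\Lambda_S)\to H_1(B\cap Z;\Lambda_S)\to\ker M\to 0$ and $0\to H_1(Z_1;\Lambda_S)\to H_1(Z;\Lambda_S)\to\ker N\to 0$ given by $(\psi,\varphi,L)$, obtains the six-term sequence $0\to\ker\psi\to\ker\varphi\to\ker L\cap\ker M\to\coker\psi\to\cdots$, and then truncates it by observing that $\psi$ is surjective.

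That last observation is exactly what is missing from your chase, and it is a genuine (though easily repaired) gap. In your surjectivity argument you write ``$\varphi(x)=\psi(z)$ for some $z$ by exactness of the second column'': exactness of that column only gives $\varphi(x)\in\im\bigl(H_1(Z_1;\Lambda_S)\to H_1(Z;\Lambda_S)\bigr)$, i.e.\ $\varphi(x)$ is the image of some $w\in H_1(Z_1;\Lambda_S)$ under the vertical inclusion-induced map, not under $\psi$. To ``subtract a lift of $z$ via $a$'' you need $w$ to be hit by $\psi$, which is an extra claim; in snake-lemma terms, surjectivity of $\ker\varphi\to\ker L\cap\ker M$ requires the connecting map $\ker L\cap\ker M\to\coker\psi$ to vanish, and without $\coker\psi=0$ the asserted surjectivity can fail, so this cannot be waved through as formal. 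The missing input is precisely what the paper records: since $Z_1=\bigsqcup_{i} Y_i\times[-1,1]$ and $B\cap Z_1=\bigsqcup_{i} Y_i\times\{\pm 1\}$, each inclusion $Y_i\times\{\pm 1\}\hookrightarrow Y_i\times[-1,1]$ is a homotopy equivalence and $\psi$ is onto (on the $k$-th summand it is $(x,y)\mapsto x\otimes t_k+y\otimes 1$). With that one sentence added, your chase is correct and coincides with the paper's proof.
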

	\begin{proof}
The previous commutative diagram restricts to
	\[ \xymatrix@C1.4cm@R0.05cm
		{	
			0\ar[dd]  & 0\ar[dd]\\ 
					&&\mbox{\phantom{a}}\\
			H_1(B\cap Z_1; \Lambda_S)\ar[dd]\ar[r]^-{\psi}   & H_1(Z_1; \L_S) \ar[dd]&\\
							&&\mbox{\phantom{a}}\\
	H_{1}(B\cap Z; \L_S)\ar[r]^-{\varphi}\ar[dd]^b & H_{1} (Z;\L_S) \ar[dd] \\
					&&\mbox{\phantom{a}}\\
			\ker(M)\ar[r]^L\ar[dd]^M   &\ker(N) \ar[dd]^N& \\
							&&\mbox{\phantom{a}}\\
			0 & 0.}
		\]	
Applying the snake lemma produces the long exact sequence 
\begin{equation}\label{snake}
0 \to \ker \psi \to \ker \varphi \to \ker L \cap \ker M \to \coker \psi \to \coker \varphi  \to \ker N / \im L\vert_{\ker M} \to 0.
\end{equation}
Since $Z_1= \bigsqcup_{i=1}^\mu Y_i \times [-1,1]$ and $B \cap Z_1=  \bigsqcup_{i=1}^\mu Y_i \times \lbrace \pm 1 \rbrace$, $\psi$ is clearly surjective and the result follows.
	\end{proof}
	
Let us describe the strategy we shall use in order to show that the map $\Phi \colon H_1(S)\otimes \Lambda_S \to H_2(W_F;\Lambda_S)$ defined in Subsection \ref{subsub:phi} is an isomorphism. Using the short exact sequence
\begin{equation} \label{eq:sequenceS}
0\to \bigoplus _{i=1}^\mu H_1(S_i) \xrightarrow{\iota} H_1(S) \xrightarrow{\pi} H_1(\Gamma)\to 0,
\end{equation}
 used in the proof of Lemma \ref{lem:nice-basis}, we shall define isomorphisms $\sigma$ and $\tau$ that fit into a commutative diagram 
\begin{equation}
\label{eq:Comut}
\xymatrix@R0.5cm@C1cm{ 
	0 \ar[d]& &0 \ar[d]\\
	\bigoplus_{i=1}^\mu H_1(S_i) \otimes \Lambda_S \ar[rr]^-\sigma \ar[d]^\iota & &\ker(\psi) \ar[d]^a \\
	H_1(S) \otimes \Lambda_S \ar[d]^\pi  \ar[r]^\Phi  &H_2(W_F;\Lambda_S) \ar[r]^{\partial,\cong}  &\ker(\varphi) \ar[d]^b \\
	H_1(\Gamma) \otimes \Lambda_S \ar[rr]^\tau \ar[d]& &\ker(L) \cap \ker(M)  \ar[d]\\
	0  & & 0.\\
}
\end{equation}
The 5-lemma will then immediately imply that $\Phi$ is an isomorphism.

\begin{figure}[b]
\labellist\small\hair 2.5pt
\pinlabel {$\overline{B}$} at 190 400
\pinlabel {$t_2\overline{B}$} at 190 620
\pinlabel {$t_1\overline{B}$} at 405 400
\pinlabel {$t_1t_2\overline{B}$} at 405 620
\endlabellist
\centering
\includegraphics[width=0.4\textwidth]{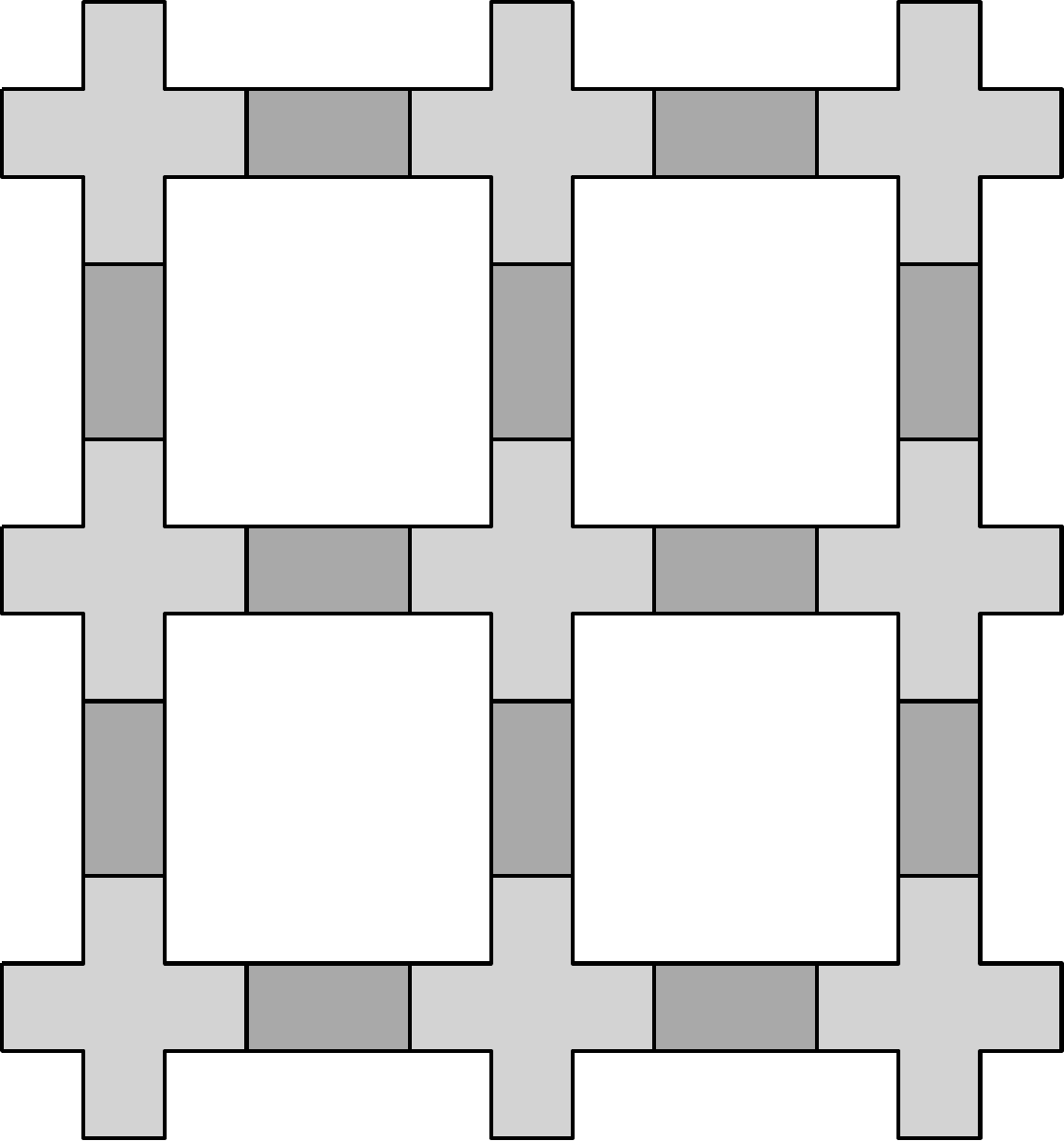}
\caption{A schematic picture of a small portion of the cover $\overline{W}_F$, in the simple case of only two surfaces and one clasp. We represented $p^{-1}(B)$ in white, $p^{-1}(Z_1)$ in dark gray and $p^{-1}(Z_2)$ in light gray.}
\label{fig:cover}
\end{figure}

\subsubsection{The short exact sequence of $H_1(S)$}      
\label{subsub:decompoH1(S)}
We fix some notation and recall the short exact sequence which was used in the proof of Lemma \ref{lem:nice-basis}. Let $c(i,j)$ be the number of clasps between surfaces $S_i$ and $S_j$ in the C-complex $S$. The clasps will be denoted by $C_{ij}^k$ for $k=1,\dots,c(i,j)$ resulting in a total number $c$ of clasps. Up to homotopy equivalence, $S$ can be constructed by taking the disjoint union of the surfaces $S_i$ and adding an arc connecting $S_i$ with $S_j$ for each clasp $C_{ij}^k$. Contracting every surface to a point produces a graph $\Gamma$ with $\mu$ vertices $\{V_k\}$ and $c$ edges $\{E_{ij}^k\}$. We shall consider $\Gamma$ as an oriented graph, where the edge $E_{ij}^k$ travels from $V_i$ to $V_j$ if $i<j$. This construction yields the short exact sequence \eqref{eq:sequenceS}, where the non-trivial maps are respectively induced by the inclusions of the disjoint $S_i$'s into $S$, and the projection to the quotient. To get the left vertical sequence of \eqref{eq:Comut}, we just tensor \eqref{eq:sequenceS} with $\Lambda_S$, without changing the name of the maps.

\subsubsection{Constructing the map $\sigma$}	
\label{subsub:sigma}
Recall from Subsection \ref{subsub:reduc} that $W_F=B \cup Z$ was further decomposed by observing that $Z=Z_1 \cup Z_2$ with $Z_1= \bigsqcup_{i=1}^\mu Y_i \times [-1,1]$ and $B \cap Z_1=  \bigsqcup_{i=1}^\mu Y_i \times \lbrace \pm 1 \rbrace$. Consequently, if we lift $B \cap Z$ as a subspace of $B$, then the restriction of $\psi$ to the $k$-th summand of $H_1(B \cap Z_1;\Lambda_S)$ is the map $(x,y) \mapsto x\otimes t_k + y\otimes 1$. Since $S_k$ and $Y_k$ are homotopy equivalent, it follows that
		\begin{equation*}
		\ker \psi = \bigoplus\limits_{i=1}^\mu 
		\Lambda_S\{(-x\otimes 1,x\otimes t_i ) \, \vert \, x\in H_1(S_i)\}.
		\end{equation*}
One can now define the map $\sigma\colon  \bigoplus_{i=1}^\mu H_1(S_i)\otimes \Lambda_S \to \ker \psi$ by 
$$ \sigma(x)\,=\, \bigg(\smprod{j\neq k}{} (t_j-1)\bigg) (-x\otimes 1, x \otimes t_k)$$
for $x \in H_1(S_k)$. Using the description of $\ker(\psi)$, the map $\sigma$ is well defined; it is an isomorphism since the $(t_j-1)$'s are invertible in $\Lambda_S$. We conclude this paragraph by proving the commutativity of the top part of (\ref{eq:sequenceS}). Given a primitive element $x \in H_1(S_k)$, we represent $\iota x\in H_1(S)$ by a nice cycle $\alpha$, which only belongs to $S_k$. Proposition \ref{prop:intersects} hence gives us
$$\partial \Phi \iota x \,\,= \,\, \sum_\varepsilon \sgn(\varepsilon)t^{\frac{\pmb{1}+\varepsilon}{2}}[\overline{\alpha}^\varepsilon].$$
Notice that, since $\alpha$ is contained in $S_k$, the curve $\alpha^\varepsilon$ only depends on the value of $\varepsilon_k$. We thus only have two push-offs for it, which we denote by $\alpha^+$ and $\alpha^-$, as one does in the case when  $S$ consists in a single Seifert surface. We will also denote by  $\hat{\varepsilon}$ the element in $\{\pm 1\}^{\mu-1}$  obtained from $\varepsilon$ by ignoring $\varepsilon_k$. We can now rewrite the above formula as
$$
\partial \Phi \iota x =
\sum_{\varepsilon\in\{\pm 1\}^{\mu}}  \!\!\!\sgn(\hat{\varepsilon}) \prod_{j\ne k}t_j^{\frac{1+\varepsilon_j}{2}} \varepsilon_k t_k^{\frac{1+\varepsilon_k}{2}}[\overline{\alpha}^{\varepsilon_k}] =
	\sum_{\varepsilon'\in\{\pm 1\}^{\mu-1}} \!\!\!\!\! \sgn(\varepsilon')  \prod_{j\ne k}t_j^{\frac{1+\varepsilon_j}{2}} (-[\overline{\alpha}^-]+t_k[\overline{\alpha}^+]).
$$
Applying \eqref{eq:epsilonformula} to the sum over the $\varepsilon'$, we get
$$\partial \Phi \iota x\,\, =\,\, \bigg(\smprod{j\neq k}{} (t_j-1)\bigg)(-[\overline{\alpha}^-]+t_k[\overline{\alpha}^+]).
$$
Commutativity follows, because by definition of $\sigma$ and $a$, we also have
 $$a \sigma(x)\,\,= \,\,\bigg(\smprod{j\neq k}{} (t_j-1)\bigg)a(-x \otimes 1, x \otimes t_k)\,\,=\,\, \bigg(\smprod{j\neq k}{} (t_j-1)\bigg)(-[\overline{\alpha}^-]+t_k[\overline{\alpha}^+]).
$$

\begin{figure}[t]
\labellist\small\hair 2.5pt
\pinlabel {$L$} at 325 759
\pinlabel {$M''$} at 340 550
\pinlabel {$M'$} at 80 550
\endlabellist
\centering
\includegraphics[width=0.7\textwidth,scale=0.6]{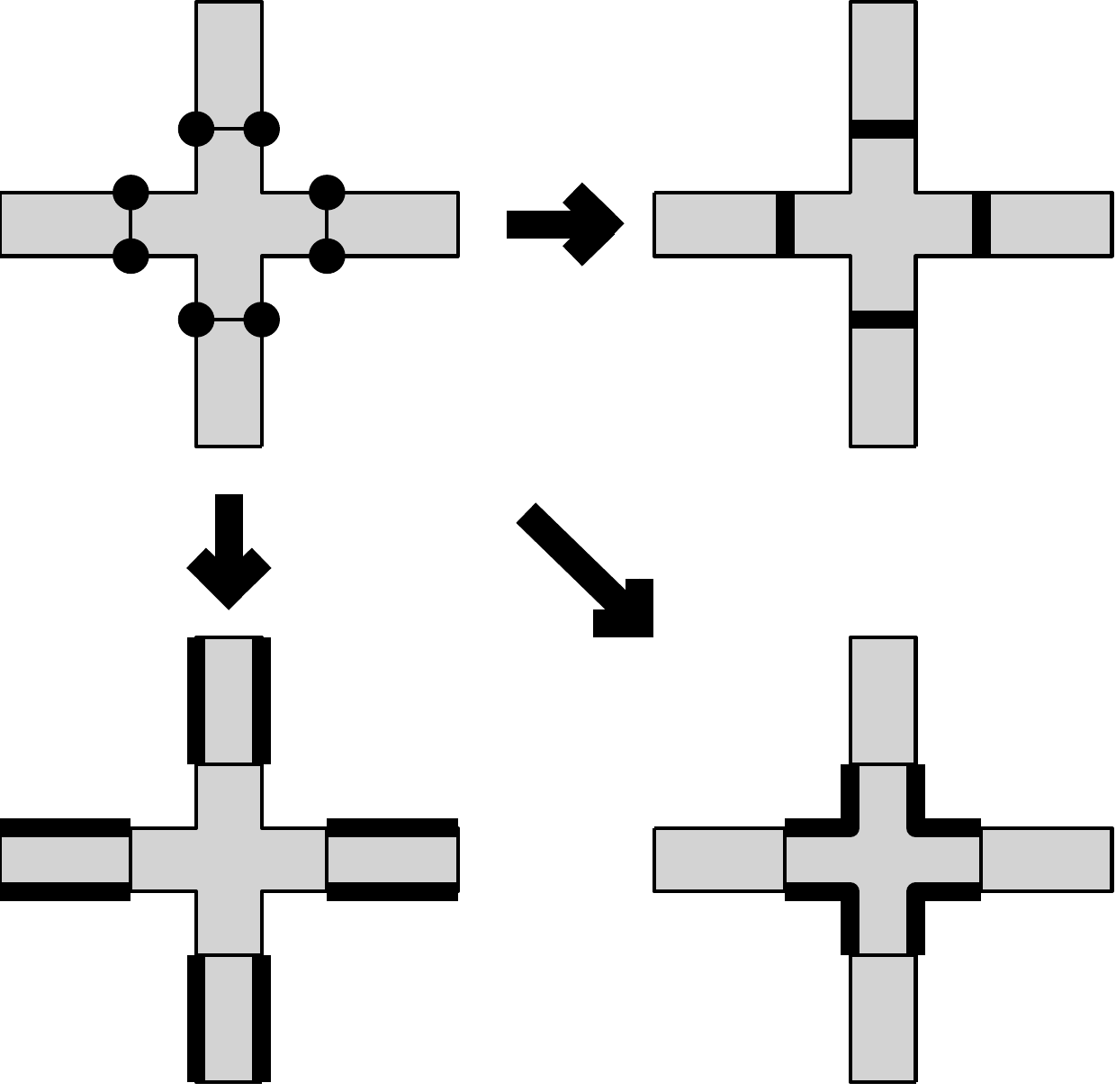}
\caption{The inclusion induced maps $L,M',M''$.}
\label{fig:MapsLMversion1}
\end{figure}

\subsubsection{Constructing the map $\tau$}
\label{subsub:tau}
First, we describe the space $\ker L \cap \ker M$ by making use of the following portion of the Mayer-Vietoris diagram \eqref{eq:bigdiagram}:
	\[\xymatrix@R0.3cm
	{
		&&
		H_0((B \cap Z_1) \cap (B \cap Z_2); \Lambda_S)\ar[r]^{\ \ \ \ \ \ \ \  L}\ar[dd]^{M=M'\oplus M''}  & H_0(Z_1 \cap Z_2; \Lambda_S)&\\
		&&\mbox{\phantom{a}}\\
		&& 
		H_0(B \cap Z_1;\Lambda_S) \oplus H_0 (B \cap Z_2; \Lambda_S).  \\}
	\]
\begin{figure}[b]
\labellist\small\hair 2.5pt
\pinlabel {+} at 150 780
\pinlabel {-} at 300 780
\pinlabel {-} at 150 580
\pinlabel {+} at 300 580
\pinlabel {-} at 125 740
\pinlabel {+} at 125 610
\pinlabel {+} at 325 740
\pinlabel {-} at 325 610
\endlabellist
\centering
\includegraphics[width=0.25\textwidth,scale=0.6]{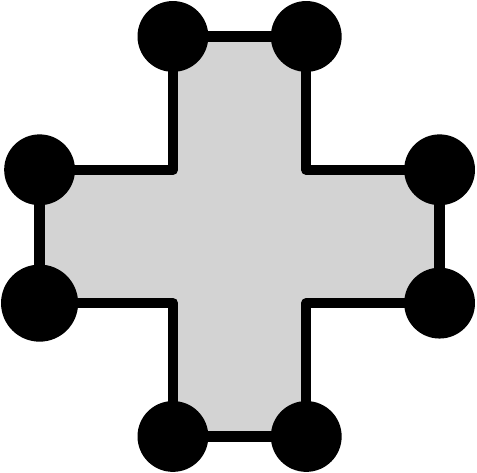}
\caption{The elements $v_{ij}^k$ that span $\ker(L) \cap \ker(M'')$}
\label{fig:Plusminus}
\end{figure}
The spaces $B\cap Z_2$, $Z_1\cap Z_2$ and $B\cap Z_1\cap Z_2$ are all of the form $\bigsqcup_{i<j}X_{ij}\times P$ for some contractible subset $P$ of $\partial K$. Consequently the restriction of the cover to each of these subspaces is the trivial $\mathbb{Z}^\mu$-covering, and the inclusion induced maps $L,M'$ and $M''$ can be understood using Figure \ref{fig:MapsLMversion1}. It follows  that $\ker L \cap \ker M''$ is generated by $c$ linearly independent elements $v_{ij}^k$, one for each clasp $C_{ij}^k$ (as illustrated in Figure \ref{fig:Plusminus}):
\begin{equation}\label{span}
	\ker L \cap \ker M'' = \Span _{\L_S} \{v_{ij}^k \, \vert \, i  < j , \, 1\leq k\leq c(i,j) \}.
	\end{equation}
Since $\ker L\cap \ker M$ is the subspace of $\ker L \cap \ker M''$ which is annihilated by $M'$, we now compute the image of $v_{ij}^k$ under $M'$. Observing that $H_0(B \cap Z_1; \Lambda_S)\cong \bigoplus_{i=1}^\mu H_0(\overline{Y_i\times \{-1\}} \sqcup\overline{Y_i\times \{1\}})\otimes \Lambda_S$ and denoting by $y_i^\pm$ a positive generator of $H_0(\overline{Y_i\times \{\pm 1\}})$, a short computation using Figure \ref{fig:MapsLMversion1} shows that 
 \begin{equation}
 \label{eq:mprime}
 M'v_{ij}^k=t_i(t_j-1) y_i^+ -(t_j-1)y_i^-  - t_j(t_i-1)y_j^+ +(t_i-1)y_j^-.
 \end{equation}
Let $T \colon	C_1(\Gamma) \to  \Span_{\Lambda_S}\{v_{ij}^k\}_{ijk}$ be the $\Z$-linear map defined on generators by
	\begin{equation*}
	T(E_{ij}^k) := \prod_{l\ne i,j} \!(t_l-1)\:v_{ij}^k.
	\end{equation*}
Note that since $\Gamma$ is a graph, its first homology group $H_1(\Gamma)$ is a subspace of $C_1(\Gamma)$.
\begin{proposition}
The restriction of $T$ to $H_1(\Gamma)$ takes values in $\ker L \cap \ker M$.
\end{proposition}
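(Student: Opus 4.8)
The statement to prove is that $T(H_1(\Gamma)) \subseteq \ker L \cap \ker M$. Since we already know from \eqref{span} that each $v_{ij}^k$ lies in $\ker L \cap \ker M''$, and $T$ maps $C_1(\Gamma)$ into $\Span_{\Lambda_S}\{v_{ij}^k\}$, it is immediate that $T(C_1(\Gamma)) \subseteq \ker L \cap \ker M''$; hence $T(H_1(\Gamma)) \subseteq \ker L \cap \ker M''$ as well. Therefore the only thing that needs checking is that $M'$ annihilates $T(z)$ for every $1$-cycle $z \in H_1(\Gamma) \subseteq C_1(\Gamma)$. The plan is to compute $M' \circ T$ on the generators $E_{ij}^k$ using formula \eqref{eq:mprime}, reorganize the result as a boundary-type expression in the chain complex of $\Gamma$, and then invoke the fact that $z$ is a cycle.

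\textbf{Key steps.} First I would apply $M'$ to $T(E_{ij}^k) = \prod_{l \ne i,j}(t_l - 1)\, v_{ij}^k$, using \eqref{eq:mprime}, to obtain
\[
M'T(E_{ij}^k) = \Big(\smprod{l\ne i,j}{}(t_l-1)\Big)\big(t_i(t_j-1)y_i^+ - (t_j-1)y_i^- - t_j(t_i-1)y_j^+ + (t_i-1)y_j^-\big).
\]
The crucial observation is that the factor $(t_j - 1)$ multiplying the $y_i^{\pm}$ terms, when combined with $\prod_{l \ne i,j}(t_l - 1)$, gives $\prod_{l \ne i}(t_l - 1)$, and symmetrically $(t_i-1)\prod_{l\ne i,j}(t_l-1) = \prod_{l\ne j}(t_l-1)$. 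So the expression becomes
\[
M'T(E_{ij}^k) = \Big(\smprod{l\ne i}{}(t_l-1)\Big)(t_i\, y_i^+ - y_i^-) - \Big(\smprod{l\ne j}{}(t_l-1)\Big)(t_j\, y_j^+ - y_j^-).
\]
Now define, for each vertex $V_i$ of $\Gamma$, the element $w_i := \big(\prod_{l \ne i}(t_l - 1)\big)(t_i\, y_i^+ - y_i^-) \in H_0(B \cap Z_1;\Lambda_S)$. The computation above says precisely that $M'T(E_{ij}^k) = w_i - w_j$, which is exactly $(w_\bullet)$ evaluated on the boundary $\partial E_{ij}^k = V_j - V_i$ (up to the sign convention for edge orientation fixed in Subsection \ref{subsub:decompoH1(S)}, where $E_{ij}^k$ runs from $V_i$ to $V_j$). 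In other words, $M' \circ T \colon C_1(\Gamma) \otimes \Lambda_S \to H_0(B \cap Z_1;\Lambda_S)$ factors as $C_1(\Gamma)\otimes\Lambda_S \xrightarrow{\partial} C_0(\Gamma)\otimes\Lambda_S \xrightarrow{V_i \mapsto w_i} H_0(B\cap Z_1;\Lambda_S)$. Consequently, if $z \in H_1(\Gamma) \subseteq C_1(\Gamma)$ is a cycle, then $\partial z = 0$ in $C_0(\Gamma)$, so $M'T(z) = 0$; combined with $M''T(z) = 0$ this gives $MT(z) = 0$, and together with $LT(z) = 0$ this proves $T(z) \in \ker L \cap \ker M$.

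\textbf{Main obstacle.} The argument is essentially a direct calculation, so there is no deep difficulty; the only real care needed is in step two, namely the bookkeeping of signs and orientations. One must verify that formula \eqref{eq:mprime} is being applied with the correct sign conventions relative to the orientation of the edge $E_{ij}^k$ (which by convention goes from $V_i$ to $V_j$ when $i < j$), and that the generators $y_i^\pm$ and $v_{ij}^k$ are normalized consistently with Figures \ref{fig:MapsLMversion1} and \ref{fig:Plusminus}. Once the expression $M'T(E_{ij}^k) = w_i - w_j$ is established with the signs matching $-\partial E_{ij}^k$ (or $\partial E_{ij}^k$, depending on convention), the factorization through $\partial$ is automatic and the conclusion follows from $z$ being a cycle. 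I would therefore present the proof by first recording the identity $M'T(E_{ij}^k) = w_i - w_j$ as the key computation, then remarking that this identifies $M'\circ T$ with a map factoring through the boundary operator of $\Gamma$, and finally concluding.
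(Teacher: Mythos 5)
Your proof is correct, and it is in fact tighter than the paper's own argument. The paper proves the proposition by the same basic computation (apply \eqref{eq:mprime} to $T(E_{ij}^k)$ and use that the $v_{ij}^k$ already lie in $\ker L\cap\ker M''$ by \eqref{span}), but it only carries the computation out for a single sample cycle of the form $E_{ij}^1+E_{jk}^1-E_{ik}^1$ and asserts that ``the general case follows the exact same steps.'' You instead extract the structural reason the computation always works: the identity
\[
M'T(E_{ij}^k)\;=\;\Bigl(\prod_{l\ne i}(t_l-1)\Bigr)(t_i\,y_i^+-y_i^-)\;-\;\Bigl(\prod_{l\ne j}(t_l-1)\Bigr)(t_j\,y_j^+-y_j^-)\;=\;w_i-w_j,
\]
which exhibits $M'\circ T$ as the composition of the simplicial boundary $\partial\colon C_1(\Gamma)\to C_0(\Gamma)$ with the assignment $V_i\mapsto w_i$ (up to an overall sign that is irrelevant for the conclusion). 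Hence every cycle $z\in H_1(\Gamma)=\ker\partial$ satisfies $M'T(z)=0$, and since $T(z)$ automatically lies in $\ker L\cap\ker M''$, the result follows for \emph{all} cycles at once. So the two routes use the same ingredients, but yours replaces the paper's representative example with a complete, uniform argument; the only care required --- which you correctly flag --- is the consistency of the orientation conventions for $E_{ij}^k$, $y_i^\pm$ and $v_{ij}^k$ entering \eqref{eq:mprime}, and since the factorization through $\partial$ kills cycles regardless of the overall sign, this does not affect the conclusion.
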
 
\begin{proof}
Since this proof consists in a straightforward computation which combines \eqref{eq:mprime} and the definition of $T$, we shall only give the details in a simple case. The general case follows the exact same steps.  Suppose that in our C-complex there are clasps $C_{ij}^1$, $C_{il}^1$, $C_{jk}^1$, with $i<j<k$, then the element $\gamma:=E_{ij}^1+E_{jk}^1-E_{ik}^1$ is in $H_1(\Gamma)$. Since $$ T\gamma = \prod_{l\ne i,j,k}\!(t_l-1)\left((t_k-1)v_{ij}^1+(t_i-1)v_{jk}^1-(t_j-1)v_{ik}^1\right)$$
is contained in $\ker L\cap \ker M''$, it only remains to check that $M'T\gamma=0$. The immediate computation
\begin{equation*} \begin{split}
M'T\gamma = \prod_{l\ne i,j,k}\!(t_l-1)\cdot[&  (t_k-1)(t_i(t_j-1) y_i^+ -(t_j-1)y_i^-   - t_j(t_i-1)y_j^+  +(t_i-1)y_j^-) +\\
&(t_i-1)( t_j(t_k-1) y_j^+ -(t_k-1)y_j^-     - t_k(t_j-1)y_k^+   +(t_j-1)y_k^-)\\
-&(t_j-1)(t_i(t_k-1)y_i^+ -(t_k-1)y_i^-     - t_k(t_i-1)y_k^+   +(t_i-1)y_k^-)]=0,
\end{split}\end{equation*}
which relies on \eqref{eq:mprime} concludes the proof.
\end{proof}
We can now define the $\Lambda_S$-linear map $$\tau\colon H_1(\Gamma)\otimes \Lambda_S\to \ker L \cap \ker M$$ as the extension to $\Lambda_S$-scalars of the $\Z$-linear map $T\colon H_1(\Gamma)\to\ker L \cap \ker M$.

\begin{proposition}
The map $\tau$ is an isomorphism of $\Lambda_S$-modules.
\end{proposition}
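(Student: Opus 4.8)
The plan is to realize $\tau$ as the restriction of an isomorphism and then identify its image. Write $\widehat{T}\colon C_1(\Gamma)\otimes\Lambda_S\to\Span_{\Lambda_S}\{v_{ij}^k\}$ for the $\Lambda_S$-linear extension of $T$. Since each $\prod_{l\ne i,j}(t_l-1)$ is a unit in $\Lambda_S$, the map $\widehat{T}$ sends the basis $\{E_{ij}^k\otimes 1\}$ of the free module $C_1(\Gamma)\otimes\Lambda_S$ to the basis $\{v_{ij}^k\}$ of $\ker L\cap\ker M''$ (recall \eqref{span}) up to unit rescaling, and is therefore an isomorphism. By the preceding proposition, $\widehat{T}$ maps the submodule $H_1(\Gamma)\otimes\Lambda_S\subseteq C_1(\Gamma)\otimes\Lambda_S$ into $\ker L\cap\ker M$, and $\tau$ is exactly this restriction; in particular $\tau$ is already injective. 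Because $\ker L\cap\ker M$ is, by definition, the submodule of $\ker L\cap\ker M''$ annihilated by $M'$, surjectivity of $\tau$ onto $\ker L\cap\ker M$ is equivalent, through the isomorphism $\widehat{T}$, to the single identity
\[
\ker\big(M'\circ\widehat{T}\big)\;=\;H_1(\Gamma)\otimes\Lambda_S \qquad\text{inside }\; C_1(\Gamma)\otimes\Lambda_S .
\]
So everything reduces to computing $M'\circ\widehat{T}$ and its kernel.

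Feeding \eqref{eq:mprime} into the definition of $\widehat{T}$ and absorbing the prefactor $\prod_{l\ne i,j}(t_l-1)$ into the factors $(t_j-1)$ and $(t_i-1)$, I expect to obtain the telescoping formula
\[
M'\widehat{T}(E_{ij}^k)\;=\;w_i-w_j ,
\]
where $w_m:=\big(\prod_{l\ne m}(t_l-1)\big)\,(t_m y_m^+-y_m^-)\in H_0(B\cap Z_1;\Lambda_S)$, with no dependence on the clasp index $k$. Equivalently, letting $\rho\colon C_0(\Gamma)\otimes\Lambda_S\to H_0(B\cap Z_1;\Lambda_S)$ be the $\Lambda_S$-linear map $V_m\mapsto w_m$ and $\partial_\Gamma\colon C_1(\Gamma)\to C_0(\Gamma)$ the simplicial boundary of the oriented graph $\Gamma$ (so $\partial_\Gamma E_{ij}^k=V_j-V_i$), this says $M'\circ\widehat{T}=-\,\rho\circ(\partial_\Gamma\otimes\mathrm{id})$. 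The map $\rho$ is injective: it is the direct sum over $m$ of the maps $\Lambda_S\to\Lambda_S y_m^+\oplus\Lambda_S y_m^-$ sending $V_m$ to $\big(\prod_{l\ne m}(t_l-1)\big)(t_m y_m^+-y_m^-)$, and each of these is injective because $\prod_{l\ne m}(t_l-1)$ is a unit and $-1$ appears as the coefficient of $y_m^-$. Hence $\ker\big(M'\circ\widehat{T}\big)=\ker\big(\partial_\Gamma\otimes\mathrm{id}_{\Lambda_S}\big)$, the overall sign being irrelevant to the kernel.

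Finally, since $\Gamma$ has no $2$-cells we have $\ker\partial_\Gamma=H_1(\Gamma)$, and since $\Lambda_S$ is flat over $\Z$ (being a localization of the free $\Z$-module $\Lambda$) the functor $-\otimes_\Z\Lambda_S$ is exact, so $\ker\big(\partial_\Gamma\otimes\mathrm{id}_{\Lambda_S}\big)=(\ker\partial_\Gamma)\otimes\Lambda_S=H_1(\Gamma)\otimes\Lambda_S$. This is precisely the identity required above, so $\tau$ is injective with image $\ker L\cap\ker M$, hence an isomorphism of $\Lambda_S$-modules. I expect the only \emph{genuinely delicate} step to be the displayed computation of $M'\widehat{T}(E_{ij}^k)$: one must keep careful track of the $\pm$ decorations on the $y_i$'s and of the clasp orientation conventions fixed in Figures~\ref{fig:MapsLMversion1} and~\ref{fig:Plusminus}, exactly as in the proof of the preceding proposition. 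Once the telescoping form $w_i-w_j$ has been reached, however, the remainder of the argument is purely formal.
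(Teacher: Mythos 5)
Your argument is correct, and it is essentially the paper's own proof in different packaging: the paper identifies $\ker L\cap\ker M$ inside $\ker L\cap\ker M''\cong\Lambda_S^c$ as the null space of an explicit $\mu\times c$ matrix and turns it into the incidence matrix of $\Gamma$ by multiplying rows and columns by the units $(t_l-1)$, which is precisely your factorization $M'\circ\widehat{T}=-\rho\circ(\partial_\Gamma\otimes\mathrm{id})$ with $\rho$ injective, followed by the same flatness argument identifying $\ker(\partial_\Gamma\otimes\mathrm{id})$ with $H_1(\Gamma)\otimes\Lambda_S$. Your telescoping formula $M'\widehat{T}(E_{ij}^k)=w_i-w_j$ does check out against \eqref{eq:mprime}, so no gap remains.
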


\begin{proof} 
As earlier in the paper, let $c$ be the total number of clasps of the C-complex $S$. We will identify the codomain $\ker L \cap \ker M$ with the kernel of a $\mu \times c$ matrix with coefficients in $\Lambda_S$. During this proof, rows of $(\mu \times c)$-matrices will be indexed by integers $l =1,\dots,\mu$ and columns by triples $(i,j,k)$ with $1\leq i<j \leq \mu$ and $1 \leq k \leq c(i,j)$. 
We start by noticing that, thanks to \eqref{span} and \eqref{eq:mprime}, we have
	\begin{equation*}
	\ker L\cap \ker M\,\,=\,\,\bigg\{\smsum{i,j,k}{} a_{ij}^kv_{ij}^k \: \vert \:  \forall \, l=1,\dotsc , \mu, \: \smsum{j,k}{}a_{lj}^k(t_j-1) -  \smsum{i,k}{}a_{il}^k(t_i-1)=0 \bigg\}. 
	\end{equation*}
It follows that $\ker L\cap \ker M$ is the subspace of $\Span_{\Lambda_S}\{v_{ij}^k\}_{ijk}$ given by the null space of the $\mu\times c $ matrix $R$ with $(l,ijk)$-coefficient 
$$(R_l^{ijk})=((t_j-1)\delta_{il}-(t_i-1)\delta_{jl}),$$ 
where $\delta_{ij}$ is the Kronecker delta function.
For each $l$, multiplying the $l$-th row of $R$ by $(t_l-1)$ yields a $(\mu \times c)$-matrix $Q$ whose kernel is still $\ker L\cap \ker M$ and whose $(l,ijk)$-coefficient is
$$(Q_l^{ijk})=((t_i-1)(t_j-1)(\delta_{il}-\delta_{jl})).$$
Next, multiplying each $ijk$-column of $Q$ by $(t_i-1)^{-1}(t_j-1)^{-1}$ results in a matrix $P$ whose $(l,ijk)$-coefficient is $$(P_l^{ijk})=(\delta_{il}-\delta_{jl}).$$
Since $P$ represents the boundary operator $\partial \colon C_1(\Gamma) \to C_0(\Gamma)$,  its kernel over $\Lambda_S$ is isomorphic to $H_1(\Gamma)\otimes \Lambda_S$. Consequently, in order to conclude the proof, it only remains to show that $\ker Q \cong \ker P$. The operations we performed on the columns of $Q$ give rise to a $\Lambda_S$-module isomorphism
$$
v_{ij}^k \mapsto (t_i-1)^{-1}(t_j-1)^{-1}v_{ij}^k\,\,=\,\,\bigg(\smprod{l=1}{\mu} (t_l-1)\bigg)^{-1}\!\!\smprod{l\ne i, j}{}(t_l-1) v_{ij}^k \,\,=\,\, \bigg(\smprod{l=1}{\mu} (t_l-1)\bigg)^{-1} \tau(E_{ij}^k)$$
which restricts to an isomorphism $\ker P \to \ker Q$. Since the $(t_i-1)$ are invertible in $\Lambda_S$, $\tau$ is an isomorphism.
\end{proof}

In order to conclude the proof of the reduction discussed in Subsection \ref{subsub:reduc}, it only remains to prove the next proposition.

\begin{proposition}
\label{prop:bottomsquare}
The bottom square of (\ref{eq:Comut}) commutes.
\end{proposition}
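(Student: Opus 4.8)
The plan is to verify the commutativity of the bottom square directly, i.e.\ to prove the identity $b\circ\partial\circ\Phi=\tau\circ\pi$ of $\Lambda_S$-linear maps $H_1(S)\otimes\Lambda_S\to\ker L\cap\ker M$. Since all four maps are $\Lambda_S$-linear, it is enough to test the equality on the distinguished generators $[\alpha]\otimes 1$, where $\alpha$ runs over the chosen set $\mathbf{B}$ of nice representatives. For such an $\alpha$, niceness forces it to meet each clasp $C_{ij}^k$ at most once, so $\pi([\alpha])=\sum\eta_{ij}^k E_{ij}^k\in H_1(\Gamma)$, the sum being over the clasps crossed by $\alpha$ and $\eta_{ij}^k\in\{\pm1\}$ comparing the direction in which $\alpha$ runs through the clasp with the chosen orientation $V_i\to V_j$ of $E_{ij}^k$; applying $\tau$ (the $\Lambda_S$-linear extension of $T$) gives $\tau\pi([\alpha]\otimes 1)=\sum\eta_{ij}^k\bigl(\prod_{l\ne i,j}(t_l-1)\bigr)v_{ij}^k$. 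The task is thus to show that $b\,\partial\,\Phi([\alpha]\otimes 1)$ equals this same expression.

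For the other side, Proposition~\ref{prop:intersects} gives $\partial\Phi([\alpha]\otimes 1)=\sum_\varepsilon\sgn(\varepsilon)\,t^{\frac{\pmb 1+\varepsilon}{2}}[\overline{\alpha}^\varepsilon]\in H_1(B\cap Z;\Lambda_S)$, so I would next compute $b[\overline{\alpha}^\varepsilon]$. Using the geometric decomposition~\eqref{eq:curvedec} of $\alpha$, hence of each push-off $\alpha^\varepsilon$, the lifted cycle $\overline{\alpha}^\varepsilon$ splits into a chain supported on the arcs lying over the $Y_i$'s, i.e.\ in $B\cap Z_1$, and a chain supported on the clasp arcs lying in $B\cap Z_2$, the two being glued along points of $(B\cap Z_1)\cap(B\cap Z_2)$. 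The Mayer--Vietoris connecting map $b$ therefore returns the signed sum of the endpoints of the $B\cap Z_1$-part, so that the only contributions come from the clasps crossed by $\alpha$, the clasp $C_{ij}^k$ contributing $\eta_{ij}^k\bigl([\overline P^{(\varepsilon_i)}]-[\overline Q^{(\varepsilon_j)}]\bigr)$, where $P^{(\varepsilon_i)}$ is the endpoint of the clasp arc lying over $Y_i$ at height $\varepsilon_i$, $Q^{(\varepsilon_j)}$ the one over $Y_j$ at height $\varepsilon_j$, and the overbars denote the lifts sitting in $\overline{\alpha}^\varepsilon\subset\overline B$.

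It then remains, fixing one clasp $C_{ij}^k$ crossed by $\alpha$, to identify $\sum_\varepsilon\sgn(\varepsilon)\,t^{\frac{\pmb 1+\varepsilon}{2}}\bigl([\overline P^{(\varepsilon_i)}]-[\overline Q^{(\varepsilon_j)}]\bigr)$ with $\bigl(\prod_{l\ne i,j}(t_l-1)\bigr)v_{ij}^k$. Since $P^{(\varepsilon_i)}$ depends neither on $\varepsilon_j$ nor on the $\varepsilon_l$ with $l\ne i,j$, and symmetrically for $Q^{(\varepsilon_j)}$, summing over the coordinates $\varepsilon_l$ ($l\ne i,j$) factors out $\prod_{l\ne i,j}(t_l-1)$ by~\eqref{eq:epsilonformula}, and the residual sum over $\varepsilon_i,\varepsilon_j\in\{\pm1\}$ collapses to $(t_j-1)\bigl(t_i[\overline P^{(+1)}]-[\overline P^{(-1)}]\bigr)-(t_i-1)\bigl(t_j[\overline Q^{(+1)}]-[\overline Q^{(-1)}]\bigr)$. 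The hard part will be recognizing this expression as $v_{ij}^k$: this requires pinning down in which sheet of the cover each of the four endpoints and their $t_i,t_j$-translates lies, and matching the resulting eight signed points with the combinatorial element $v_{ij}^k$ drawn in Figure~\ref{fig:Plusminus}, in a way consistent with the description of the maps $L,M',M''$ in Figure~\ref{fig:MapsLMversion1}. A useful consistency check along the way is that $(t_j-1)(t_iy_i^+-y_i^-)-(t_i-1)(t_jy_j^+-y_j^-)$ is exactly the right-hand side of~\eqref{eq:mprime}, i.e.\ $M'v_{ij}^k$, so the computed expression and $v_{ij}^k$ at least have the same image under $M'$. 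Granting the identification, summing the clasp contributions over all clasps crossed by $\alpha$ yields $b\,\partial\,\Phi([\alpha]\otimes 1)=\sum\eta_{ij}^k\bigl(\prod_{l\ne i,j}(t_l-1)\bigr)v_{ij}^k=\tau\pi([\alpha]\otimes 1)$, so the square commutes; I expect a short case analysis on the orientation of $\alpha$ through the clasp (fixing $\eta_{ij}^k$) and on which side of $S_i,S_j$ the heights $\varepsilon_i,\varepsilon_j$ select will be needed to settle all the signs, and this local bookkeeping is the main obstacle.
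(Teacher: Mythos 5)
Your proposal is correct and follows essentially the same route as the paper: test on the nice generators in $\mathbf{B}$, compute $\tau\pi$ combinatorially via the crossing signs, use Proposition~\ref{prop:intersects}, localize the Mayer--Vietoris boundary $b$ at the clasps, and factor out $\prod_{l\ne i,j}(t_l-1)$ with \eqref{eq:epsilonformula}. The one step you defer, namely identifying $(t_j-1)\bigl(t_i[\overline{P}^{(+1)}]-[\overline{P}^{(-1)}]\bigr)-(t_i-1)\bigl(t_j[\overline{Q}^{(+1)}]-[\overline{Q}^{(-1)}]\bigr)$ with (a sign times) $v_{ij}^k$, is exactly the local check the paper settles via Figure~\ref{fig:Curve} (the paper takes the boundary of the $B\cap Z_2$ part, you take that of the $B\cap Z_1$ part, which agree up to the Mayer--Vietoris sign), and expanding your expression does reproduce the eight signed lifts that constitute $v_{ij}^k$.
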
 

\begin{proof}
Given a nice cycle $\alpha$ in $S$ and a clasp $C_{ij}^k$, define
$$n_{ij}^k=\begin{cases}
1         & \mbox{if }\alpha \mbox{ crosses } C_{ij}^k  \mbox{ from } i \mbox{ to } j,  \\ 
-1         & \mbox{if }\alpha \mbox{ crosses } C_{ij}^k  \mbox{ from } j \mbox{ to } i,  \\ 
0         & \mbox{if }\alpha \mbox{ does not cross } C_{ij}^k \text{ at all}.
 \end{cases} $$
As we will see, the image of $[\alpha]\otimes 1$ under both $\tau \pi$ and $b \partial \Phi$ will only depend on this combinatorial data. We start with the computation of $\tau \pi([\alpha]\otimes 1)$. From the definitions of $\pi$ and $n_{ij}^k$, it is clear that
$$\pi ([\alpha]\otimes 1)= \sum_{i<j,\: k} n_{ij}^k E_{ij}^k,$$
and consequently the definition of $\tau$ yields
$$\tau \pi ([\alpha]\otimes 1)=\sum_{i<j,\: k} \prod _{l\ne i,j}\! (t_l-1) \:n_{ij}^k v_{ij}^{k}.$$
To conclude, we compute $b \partial \Phi([\alpha]\otimes 1)$. 
Thanks to Proposition \ref{prop:intersects}, we have $
\partial \Phi ([\alpha]\otimes 1)=\sum_\varepsilon \sgn(\varepsilon)t^{\frac{\pmb{1}+\varepsilon}{2}}[\overline{\alpha}^\varepsilon].$
Since the map $b$ is the boundary homomorphism in the Mayer--Vietoris sequence for $B\cap Z=(B \cap Z_1) \cup (B \cap Z_2)$, it only depends on the behavior of $\alpha$ at the clasps. 
For $\varepsilon\in\{\pm 1\}^\mu$, the part of $\overline{\alpha}^\varepsilon$ contained in $B\cap Z_2$ will be either empty or a path connecting $S_i\times \{\varepsilon_i\}$ to $S_j\times \{\varepsilon_j\}$, whose direction depends on the sign of $n_{ij}^k$. Since this data does not depend on the coordinates of $\varepsilon$ different from $i$ and $j$, we will denote such a strand by $\overline{\alpha}_{ij}^{k,\varepsilon_i\varepsilon_j}$. The part of $\partial \Phi ([\alpha]\otimes 1)$ which is contained in $B\cap Z_2$ is hence
$$\smsum{\varepsilon}{} \sgn(\varepsilon)t^{\frac{\pmb{1}+\varepsilon}{2}}\smsum{i<j,\, k}{}\overline{\alpha}_{ij}^{k,\varepsilon_i\varepsilon_j}=
\smsum{i<j,\, k}{} \smsum{\varepsilon'}{} \,\sgn(\varepsilon')\smprod{l\ne i,j}{}\!t^{\frac{1+\varepsilon_l}{2}}\:
  \Big(t_it_j\overline{\alpha}_{ij}^{k,++} -t_i \overline{\alpha}_{ij}^{k,+-} -t_j\overline{\alpha}_{ij}^{k,-+}+ \overline{\alpha}_{ij}^{k,--}\Big),$$ 
  where $\varepsilon'$ now varies in $\{\pm 1 \}^{\mu-2}$.
  We observe now that the boundary of $(t_it_j\overline{\alpha}_{ij}^{k,++} -t_i \overline{\alpha}_{ij}^{k,+-} -t_j\overline{\alpha}_{ij}^{k,-+}+ \overline{\alpha}_{ij}^{k,--})$ is given by $-n_{ij}^kv_{ij}^k$ (see Figure \ref{fig:Curve}). So, taking into account a minus sign coming from the Mayer-Vietoris sequence, we get
$$b\partial \Phi ([\alpha]\otimes 1)= \sum_{i<j,\, k} \sum_{\varepsilon'}\,\sgn(\varepsilon')\prod_{l\ne i,j}\!\! t^{\frac{1+\varepsilon_l}{2}}\: n_{ij}^kv_{ij}^k.$$
As a last step, we apply \eqref{eq:epsilonformula} to the sum over the $\varepsilon'$, obtaining
$$b\partial \Phi ([\alpha]\otimes 1)=\sum_{i<j,\: k}  \prod _{l\ne i,j}\! (t_l-1) \: n_{ij}^k v_{i,j}^{k}.$$
This concludes the proof of the proposition.

\end{proof}
\begin{figure}[t]
\labellist\small\hair 2.5pt
\pinlabel {$-$} at 82 170
\pinlabel {$+$} at 82 341
\pinlabel {$-$} at  158 417
\pinlabel {$+$} at 332 417
\pinlabel {$-$} at   408     341
\pinlabel {$+$} at   408 170
\pinlabel {$-$} at   332 91 
\pinlabel {$+$} at 158 91
\pinlabel {$\overline{\alpha}^{--}$} at 160 230
\pinlabel {$-t_j \overline{\alpha}^{-+}$} at 160 272
\pinlabel {$-t_i\overline{\alpha}^{+-}$} at 340 230
\pinlabel {$t_it_j\overline{\alpha}^{++}$} at 340 272
\endlabellist
\centering
\includegraphics[width=0.4\textwidth,scale=1]{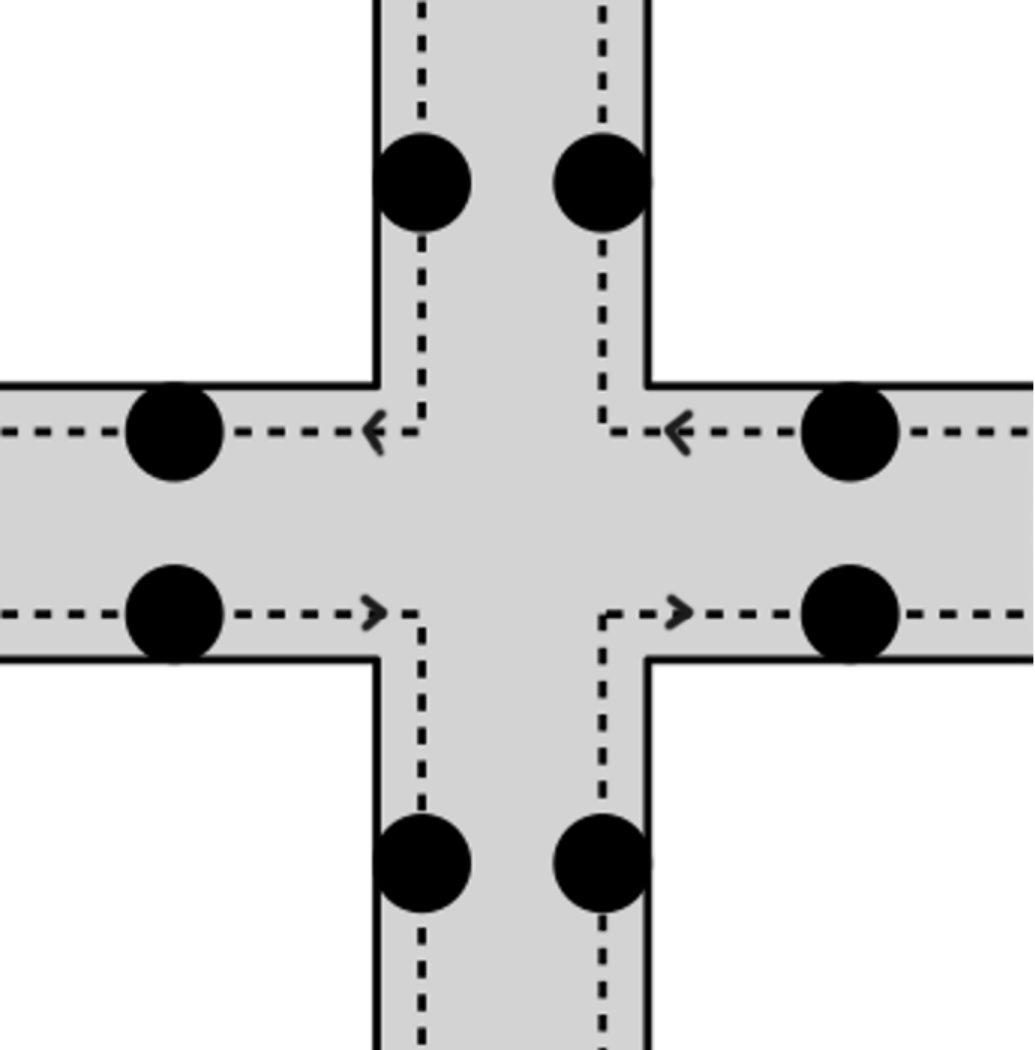}
\caption{A representation of the curve  $(t_it_j\overline{\alpha}_{ij}^{k,++} -t_i \overline{\alpha}_{ij}^{k,+-} -t_j\overline{\alpha}_{ij}^{k,-+}+ \overline{\alpha}_{ij}^{k,--})$, in the case where $n_{ij}^k=1$. Observe that its boundary gives the opposite of the element $v_{ij}^k$ depicted in Figure \ref{fig:Plusminus}.} 
\label{fig:Curve}
\end{figure}
Combining the reduction of Subsection \ref{subsub:reduc} with the results of Subsection \ref{subsub:sigma}, we have now completed the proof of Proposition \ref{prop:phi}. Indeed, since we now know that the maps $\partial, \sigma$ and $\tau$ are all isomorphisms, applying the $5$-lemma to the diagram in Equation (\ref{eq:Comut}) implies that $\Phi \colon H_1(S) \otimes \Lambda_S \to H_2(W_F;\Lambda_S)$ is an isomorphism, as desired.

\subsection{The twisted intersection pairing of $W_F$}
\label{sub:intform}
In Subsection \ref{sub:complement}, we decomposed the exterior of the push-in as $W_F=B \cup Z$, where $B$ was homeomorphic to the 4-ball. Then, in subsection \ref{sub:H2}, we used this decomposition to build an isomorphism $\Phi \colon H_1(S) \otimes \Lambda_S \to H_2(W_F;\Lambda_S)$. In this subsection, we use the resulting basis of $H_2(W_F;\Lambda_S)$ (recall Proposition \ref{prop:phi})  to compute the twisted intersection form $\lambda$ of $W_F$. More precisely, given $\alpha, \beta \in H_1(S)$, we relate $\lambda([\Phi_\alpha],[\Phi_\beta])$ to the matrix $H(t)$ described in Subsection \ref{sub:Seifert}.
\medbreak
Recall from Subsection \ref{sub:intersection forms}, that the formula for $\lambda([\Phi_\alpha], [\Phi_\beta])$ involves the algebraic intersection of $\Phi_\alpha$ with $t^g \Phi_\beta$ for each $g \in \Z^\mu$. In order to pinpoint where these intersections take place, we consider the space 
$$T\,\,:=\,\,\smcup{i=1}{\mu} S_i \times [-1,1] \subseteq S^3$$ 
so that $\Phi _\alpha \subseteq \bigsqcup_{\varepsilon\in \{\pm 1\}^{\mu}}t^{\frac{\pmb{1}+\varepsilon}{2}}\overline{B}\cup p^{-1}(T\star 0),$ where $p \colon \overline{W}_F \to W_F$ denotes the covering  corresponding to the kernel of the abelianization map $\pi_1(W_F) \rightarrow H_1(W_F)$ and $B\cap S^3 = S^3\backslash \mathring{T}$. Moreover,  smoothing the corners, $T$ becomes a smooth submanifold of $S^3$ and, as such, its oriented boundary admits a neighborhood $\partial T \times [-\delta, \delta]$, where the positive part lives outside of $T$.  

The next lemma (whose proof can be understood by looking at Figure \ref{fig:s3}) will make it possible to transfer information from $\partial B$ to the standard $S^3$.

\begin{lemma}
\label{lem:S3}
There exists an orientation preserving homeomorphism between $\partial B$ and $S^3$, which brings $\partial T \star [0,\frac{1}{2}]\subseteq B$ to $\partial T \times [0,\delta]\subseteq S^3$. $($Notice that $\partial T\star \{\frac{1}{2}\}$ is sent to $\partial T \times\{0\}$, while $\partial T\star \{0\}$ is sent to $\partial T \times\{\delta\}$$)$.
\end{lemma}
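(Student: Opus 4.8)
\textbf{Proof plan for Lemma~\ref{lem:S3}.}
The plan is to build the homeomorphism $\partial B \to S^3$ in two stages, isolating the ``radial collar'' $\partial T \star [0,\tfrac12]$ and treating it separately from the rest. Recall that $B = D^4 \sm \bigcup_i \interior(S_i'')\times(-1,1)\star[0,i)$, and that $\partial B$ consists of the two pieces $B\cap S^3 = S^3 \sm \interior(T)$ sitting at radial depth $0$, together with the ``walls'' of the removed tubular neighborhoods, which after the usual collapsing of the $\star$-direction assemble into a copy of $T$ (at depth $\tfrac12$) glued to $\partial T \star [0,\tfrac12]$ along $\partial T \star\{\tfrac12\}$. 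So abstractly $\partial B = (S^3\sm \interior T) \cup_{\partial T\star\{0\}} (\partial T \star[0,\tfrac12]) \cup_{\partial T\star\{\tfrac12\}} T'$, where $T'$ is a second copy of $T$; this is just the double of $T$ along its boundary, reparametrised, and is therefore a closed oriented $3$-manifold which we must identify with $S^3$.

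First I would observe that $\partial T \star[0,\tfrac12] \cong \partial T \times[0,\tfrac12]$ canonically (the $\star$-embedding was chosen to be a genuine product embedding of $S^3\times[0,\mu]$), so we may rename its two ends as $\partial T\star\{\tfrac12\}$ and $\partial T \star\{0\}$ without ambiguity. The target $S^3$ likewise decomposes as $(S^3\sm \interior(T\times[-\delta,\delta])) \cup (\partial T\times[-\delta,\delta])$ after we thicken $T$ to its collar neighborhood $T\times[-\delta,\delta]$ inside $S^3$; but $T\times[-\delta,\delta]$ is itself homeomorphic to $T$ (a manifold times an interval, absorbed into $T$ since $T$ already has a collar on its boundary), so $S^3 = (S^3\sm\interior T) \cup_{\partial T} \partial T\times[0,\delta] \cup_{\partial T\times\{\delta\}} T$ as well, matching the description of $\partial B$ piece for piece. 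The homeomorphism is then defined piecewise: it is the identity on $S^3\sm\interior(T)$ at depth $0$ (sending it to $S^3\sm\interior(T\times[-\delta,\delta])$ via the collar), it is the obvious affine reparametrisation $[0,\tfrac12]\to[0,\delta]$ on $\partial T \star[0,\tfrac12] = \partial T\times[0,\tfrac12] \to \partial T\times[0,\delta]$ (which indeed sends $\partial T\star\{\tfrac12\}\mapsto \partial T\times\{0\}$ and $\partial T\star\{0\}\mapsto \partial T\times\{\delta\}$, as claimed in the parenthetical remark), and it is the identity on the copy of $T$ at depth $\tfrac12$. One checks these agree on the overlaps $\partial T\star\{0\}$ and $\partial T\star\{\tfrac12\}$, so they glue to a homeomorphism, and each piece is orientation preserving for the chosen conventions (here one must be a little careful: the orientation of $\partial B$ is the boundary orientation of $B\subset D^4$, and the collar direction $[0,\tfrac12]$ of $\partial T\star[0,\tfrac12]$ is outward-radial, which matches the convention stated in the text that ``the positive part lives outside of $T$'' for $\partial T\times[-\delta,\delta]$).

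The main obstacle is not the construction but the bookkeeping of orientations and of which ``side'' is which: one has to verify that the double of $T$ along $\partial T$, assembled in the specific way dictated by the geometry of $B$, really is $S^3$ rather than some other closed $3$-manifold, and that the gluing respects orientations so the resulting homeomorphism is orientation preserving. For this I would argue that $B$ is homeomorphic to $D^4$ (already noted in Subsection~\ref{sub:complement}), so $\partial B \cong S^3$ a priori; it then only remains to arrange the homeomorphism so that it carries the subset $\partial T\star[0,\tfrac12]$ to the prescribed collar $\partial T\times[0,\delta]$ of $\partial T\subset S^3$ with the stated endpoint behaviour, which is a matter of isotoping within $S^3$ (using that any two bicollars of a two-sided separating surface in a $3$-manifold are ambiently isotopic) and reversing the collar parameter if necessary to match $\tfrac12\leftrightarrow 0$. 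Keeping track of the orientation conventions throughout is the delicate part; the rest is routine.
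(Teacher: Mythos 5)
The paper gives no proof of Lemma~\ref{lem:S3}: it is stated as geometrically evident and only illustrated by the schematic Figure~\ref{fig:s3}, so there is no written argument to compare yours with. Your main, piecewise construction is exactly the intended one and is correct in outline: $\partial B$ is the union of the outer piece $S^3\setminus\interior(T)$ at depth $0$, the wall $\partial T\star[0,\frac12]$, and a copy of $T$ formed by the deeper walls and the bottoms of the excavated region, while $S^3=\bigl(S^3\setminus\interior(T\cup\partial T\times[0,\delta))\bigr)\cup\partial T\times[0,\delta]\cup T$; matching the pieces (identity up to a collar push on the two end pieces, an affine flip of the collar coordinate in the middle) and checking orientation at one point of the outer piece, where the map is essentially the inclusion into $\partial D^4$, does the job. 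One wrinkle you gloss over (as does the paper) is that for $\mu>1$ the excavated trenches reach different depths $[0,i)$, so the ``copy of $T$ at depth $\frac12$'' is really the lower frontier of a staircase-shaped region; it is homeomorphic to $T$ because every radial fiber meets the removed set in an interval starting at $S^3$, so this frontier is a graph-like copy of the top slice. Also, the thickening of $T$ in $S^3$ should be written $T\cup(\partial T\times[0,\delta])$, not $T\times[-\delta,\delta]$, which would be four-dimensional.

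Two of your side remarks, however, are wrong and should be dropped. First, $\partial B$ is \emph{not} ``the double of $T$ along its boundary'': $T\cup_{\partial T}T$ has $b_1\geq\operatorname{rk}H_1(S)>0$ in any nontrivial case and so is never $S^3$, and $S^3\setminus\interior(T)$ is in general not even homeomorphic to $T$. The correct description --- and the reason $\partial B\cong S^3$ --- is that $\partial B$ is $S^3$ cut along the two-sided closed surface $\partial T$ and reglued with a product collar inserted. You never use the ``double'' identification, so this slip is harmless, but as stated it is false. Second, the fallback in your last paragraph has a genuine gap: knowing abstractly that $B\cong D^4$, hence $\partial B\cong S^3$, and then invoking uniqueness of bicollars is not enough, because an arbitrary homeomorphism $\partial B\to S^3$ may carry $\partial T\star\{\frac12\}$ to a \emph{knotted} copy of that surface, one whose complementary regions are not homeomorphic to $T$ and $S^3\setminus\interior(T)$; uniqueness of bicollars standardizes the collar of a fixed embedded surface, not the embedding itself, so no ambient isotopy need carry the image to the standard $\partial T$ with the prescribed sides. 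The piecewise construction is therefore not just the cleaner route --- it is the one that actually proves the lemma.
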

\begin{figure}[h]
\labellist\small\hair 2.5pt
\pinlabel {$\partial T \times [0,\delta]$} at 35 100
\pinlabel {$T$} at 35 84
\pinlabel {$S^3$} at 13 11
\pinlabel {$T \star [0,1]$} at 140 70
\pinlabel {$B$} at 257 66
\pinlabel {$\partial T \star [0,\frac{1}{2}]$} at 305 98
\pinlabel {$\partial B$} at 385 72
\endlabellist
\centering
\includegraphics[width=0.9\textwidth]{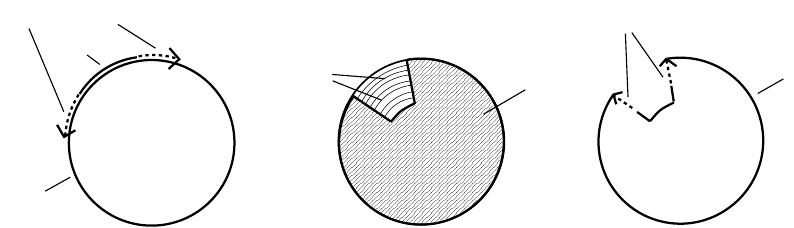}
\caption{Schematic picture of the statement of Lemma~\ref{lem:S3}.}
\label{fig:s3}
\end{figure}
We are now ready to prove the main result of this section.
This result is precisely Theorem~\ref{thm:intform-intro} from the introduction.

\begin{theorem}
\label{thm:intform}
Let $H(t)$ be the matrix described in Subsection \ref{sub:Seifert}, corresponding to the basis $\mathcal{B}$ of $H_1(S)$. With respect to the image of $\mathcal{B}$ by the isomorphism $\Phi$ of Proposition \ref{prop:phi}, the twisted intersection form $\lambda\colon H_2(W_F;\Lambda_S)\times H_2(W_F;\Lambda_S)\to \Lambda_S$ is represented by $H(t)$. More explicitly, if $\Phi_\alpha$ and $\Phi_\beta$ are two of the surfaces constructed above, then we have the formula
\[\lambda([\Phi_\alpha],[\Phi_\beta])=\sum_{\varepsilon} \prod_{i=1}^{\mu} (1-t_i^{\varepsilon_i})\lk(\alpha^{\varepsilon}, \beta).\]
\end{theorem}

\begin{proof}
Fix 
$[\alpha], [\beta] \in \mathcal{B}$, such that $\alpha$ and $\beta$ are the nice representatives in $\mathbf{B}$ we used to define the map $\Phi$. 
We perform homotopies which push $\Phi _\alpha$ and $\Phi _\beta$ inside the interior of $W_F$ in such a way that
$$\displaystyle \Phi _\alpha \subseteq p^{-1}(B)\cup p^{-1}\big(T\star \tmfrac{1}{4}\big)\:\:\:, \:\:\:\displaystyle\Phi _\beta \subseteq p^{-1}(B)\cup p^{-1}\big(T\star \tmfrac{1}{2}\big),$$
and that $\Phi _\alpha$ intersect $t^g\phi_\beta$ transversally for all $g\in \mathbb{Z}^\mu$.
Recalling the construction of $\Phi_\alpha$ and $\Phi_\beta$, we see that the algebraic intersection of $\Phi_\alpha$ with $t^g \Phi_\beta$ can now only happen in the disjoint union of the $4$-balls $t^h\overline{B}$.  
It follows that  
$$ \lambda([\Phi_\alpha],[\Phi_\beta]) \overset{\text{def}}{=}\sum_{g\in \mathbb{Z}^{\mu}} s(\Phi_\alpha, t^g \, \Phi_\beta) \, t^g   =\sum_{g\in \mathbb{Z}^{\mu}} \sum_{\varepsilon, \varepsilon' \in \{\pm 1\}^{\mu}} \sgn(\varepsilon)\sgn(\varepsilon') s(t^{\frac{\pmb{1}+\varepsilon}{2}}\overline{S}_{\alpha}^{\varepsilon}, t^gt^{\frac{\pmb{1}+\varepsilon'}{2}}\overline{S}_{\beta}^{\varepsilon'})t^g.$$
Moreover, the two surfaces $t^{\frac{\pmb{1}+\varepsilon}{2}}S_{\alpha}^{\varepsilon}$ and $t^g t^{\frac{\pmb{1}+\varepsilon}{2}}S_{\beta}^{\varepsilon'}$ only intersect when they belong to the same lift of $B$, i.e. when $t^gt^{\frac{\pmb{1}+\varepsilon'}{2}}=t^{\frac{\pmb{1}+\varepsilon}{2}}$. This occurs precisely when $g=\frac{\varepsilon-\varepsilon '}{2}$ and, in this case, translation invariance gives us
 $$ s(t^{\frac{\pmb{1}+\varepsilon}{2}}\overline{S}_{\alpha}^{\varepsilon}, t^gt^{\frac{\pmb{1}+\varepsilon'}{2}}\overline{S}_{\beta}^{\varepsilon'})=s(S_\alpha^\varepsilon,S_\beta^{\varepsilon'}).$$
Homotope $\overline{S}_{\alpha}^{\varepsilon}, \overline{S}_{\beta}^{\varepsilon'}\subseteq \overline{B}$ so that their boundaries are respectively $\overline{\alpha}^{\varepsilon}\star \{\frac{1}{4}\}$ and $ \overline{\beta}^{\varepsilon'}\star \{\frac{1}{2}\}$. Consequently, the algebraic intersection $s(S_\alpha^\varepsilon,S_\beta^{\varepsilon'})$ coincides with the linking number in $\partial  \overline{B}$ of $\overline {\alpha}^{\varepsilon}\star \{\frac{1}{4}\}$ and $\overline{\beta}^{\varepsilon'}\star \{\frac{1}{2}\}$, which in turn equals the linking number in $\partial B$ of $\alpha^{\varepsilon}\star \{\frac{1}{4}\} $ and $\beta^{\varepsilon'}\star \{\frac{1}{2}\}$. Lemma \ref{lem:S3} now provides the existence of an orientation preserving homeomorphism from $\partial B$ to $S^3$ that brings $\alpha^{\varepsilon}\star \{\frac{1}{4}\} $ to $\alpha ^{\varepsilon} \times \{\frac{\delta}{2}\}$, and $\beta^{\varepsilon'}\star \{\frac{1}{2}\}$ to $\beta ^{\varepsilon'}=\beta^{\varepsilon'}\times \{0\}$. As a consequence, we obtain
	$$s(S_{\alpha}^{\varepsilon}, S_{\beta}^{\varepsilon'})= \lk(\alpha ^{\varepsilon} \times \{\delta/ 2\},  \beta ^{\varepsilon'}) = \lk (\alpha ^{\varepsilon}, \beta ).$$
Putting everything together, we get
$$ 	\lambda([\Phi_\alpha],[\Phi_\beta]) 
=\sum_{\varepsilon, \varepsilon' \in \{\pm 1\}^{\mu}} \sgn(\varepsilon)\sgn(\varepsilon') \lk(\alpha^{\varepsilon}, \beta) t^{\frac{\varepsilon - \varepsilon'}{2}}.$$
We will now algebraically manipulate this last expression in order to get the desired formula. Factoring out the terms involving $\varepsilon'$ and using \eqref{eq:epsilonformula} (applied to the variables $t_i^{-1}$), we rewrite this as
$$\lambda([\Phi_\alpha],[\Phi_\beta])
= \sum_{\varepsilon}\sgn (\varepsilon)\lk(\alpha^{\varepsilon}, \beta)\, t^{\frac{\varepsilon + \pmb{1}}{2}} \sum_{\varepsilon'}\sgn(\varepsilon')\, t^{-\frac{\pmb{1}+\varepsilon'}{2}}
=\sum_{\varepsilon}\sgn (\varepsilon)\lk(\alpha^{\varepsilon}, \beta)\, t^{\frac{\varepsilon + \pmb{1}}{2}}\prod_{i=1}^{\mu}(t_i^{-1}-1).$$
Then, from the identity $\varepsilon_i \, t_i^{\frac{\varepsilon_i + 1}{2}} (t_i^{-1}-1)=1-t_i^{\varepsilon_i}$, we get
$$\lambda([\Phi_\alpha],[\Phi_\beta])=\sum_{\varepsilon} \prod_{i=1}^{\mu} (1-t_i^{\varepsilon_i})\lk(\alpha^{\varepsilon}, \beta),$$
which concludes the proof of the theorem.
\end{proof}

\section{Proof of Theorem \ref{thm:main}}
\label{sec:proof}

\subsection{Preliminary lemmas}
\label{sub:lemmas}
Let $L$ be a colored link and let $S$ be a C-complex for $L$. 
Given the exterior $W_F$ of the push-in $F$ of the C-complex $S$, we saw in Section \ref{sec:intform} that the intersection form on $H_2(W_F;\Lambda_S)$ could be computed in terms of generalized Seifert matrices. In this subsection, we prove some technical lemmas which will enable us to relate the above mentioned intersection form to the Blanchfield pairing on $\partial W_F$.

\begin{lemma}
\label{lem:ev}
If the C-complex $S$ is totally connected, then the evaluation map 
$$\ev \circ \kappa \colon H^2(W_F;\Lambda_S) \rightarrow \overline{\Hom_{\Lambda_S}(H_2(W_F;\Lambda_S),\Lambda_S)}$$ is an isomorphism.
\end{lemma}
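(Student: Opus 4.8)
The plan is to realize the evaluation map $\ev\circ\kappa$ as an edge homomorphism in the Universal Coefficient Spectral Sequence of \cite[Theorem~2.3]{Lev}, applied to the chain complex $C_*(W_F;\Lambda_S)$, and to show that this edge map is an isomorphism by checking that the relevant $\mathrm{Ext}$ and $\mathrm{Tor}$ correction terms vanish. Concretely, the spectral sequence has $E_2^{p,q}=\mathrm{Ext}^p_{\Lambda_S}(H_q(W_F;\Lambda_S),\Lambda_S)$ converging to $H^{p+q}(W_F;\Lambda_S)$, and the composition $\ev\circ\kappa$ in degree $2$ is the natural map $H^2(W_F;\Lambda_S)\to\overline{\Hom_{\Lambda_S}(H_2(W_F;\Lambda_S),\Lambda_S)}=E_2^{0,2}$. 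So it suffices to prove that $E_2^{p,q}=0$ whenever $p+q=2$ and $q<2$ (so that the filtration of $H^2$ reduces to the $E_\infty^{0,2}=E_2^{0,2}$ term), and that $E_2^{1,2}=0$ (so no nonzero differential can hit $E_2^{0,2}$). Since $\Lambda_S$ has global dimension at most $2$ (it is a localization of $\mathbb{Z}[t_1^{\pm1},\dots,t_\mu^{\pm1}]$, which has global dimension $\mu+1$, but the only $\mathrm{Ext}$-degrees that enter below are small), the only terms to kill are $E_2^{2,0}=\mathrm{Ext}^2_{\Lambda_S}(H_0(W_F;\Lambda_S),\Lambda_S)$ and $E_2^{1,1}=\mathrm{Ext}^1_{\Lambda_S}(H_1(W_F;\Lambda_S),\Lambda_S)$ and $E_2^{1,2}=\mathrm{Ext}^1_{\Lambda_S}(H_2(W_F;\Lambda_S),\Lambda_S)$.

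The first two vanish immediately from the total-connectedness hypothesis: by Lemma~\ref{lem:lambdaS}(2) we have $H_0(W_F;\Lambda_S)=0$ (the image of $\varphi$ contains all the $t_i$, hence a nontrivial element), so $E_2^{2,0}=0$; and by Corollary~\ref{cor:H10}, total connectedness gives $H_1(W_F;\Lambda_S)=0$, so $E_2^{1,1}=0$. For the remaining term $E_2^{1,2}=\mathrm{Ext}^1_{\Lambda_S}(H_2(W_F;\Lambda_S),\Lambda_S)$, I would invoke Theorem~\ref{thm:main1}: the isomorphism $\Phi\colon H_1(S)\otimes\Lambda_S\xrightarrow{\ \cong\ }H_2(W_F;\Lambda_S)$ exhibits $H_2(W_F;\Lambda_S)$ as a \emph{free} $\Lambda_S$-module of rank $n=\operatorname{rk}H_1(S)$, and $\mathrm{Ext}^p_{\Lambda_S}$ of a free module vanishes for all $p\geq 1$. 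Hence $E_2^{1,2}=0$ and also $E_2^{0,2}=\overline{\Hom_{\Lambda_S}(H_2(W_F;\Lambda_S),\Lambda_S)}$ is itself free of rank $n$. Assembling these vanishings: the abutment filtration on $H^2(W_F;\Lambda_S)$ has associated graded $E_\infty^{2,0}\oplus E_\infty^{1,1}\oplus E_\infty^{0,2}$, the first two are subquotients of the zero groups $E_2^{2,0}$ and $E_2^{1,1}$ hence zero, and $E_\infty^{0,2}=E_2^{0,2}$ because nothing maps into or out of it ($E_2^{1,2}=0$ and the only other potential differential would land in $E_2^{2,1}$, but its source $E_2^{0,2}$ receiving nothing and the relevant differential $d_2\colon E_2^{0,2}\to E_2^{2,1}$ —  wait, more simply: $d_2$ out of $E_2^{0,2}$ goes to $E_2^{2,1}$, and one checks $E_2^{2,1}=\mathrm{Ext}^2(H_1(W_F;\Lambda_S),\Lambda_S)=0$ again by Corollary~\ref{cor:H10}). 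Therefore $H^2(W_F;\Lambda_S)\xrightarrow{\ \cong\ }E_2^{0,2}$, and this isomorphism is exactly $\ev\circ\kappa$.

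The main obstacle I anticipate is purely bookkeeping: matching the topologically-defined composite $\ev\circ\kappa$ (as set up in Subsection~\ref{sub:twisted}) with the algebraically-defined edge homomorphism of the UCSS, and being careful about the conjugation $\overline{(-)}$ and the left/right module conventions so that the identification is literally the edge map and not merely an abstract isomorphism. This is the same naturality check that underlies the standard UCT-for-cohomology argument; once one accepts \cite[Theorem~2.3]{Lev} as giving that identification (as is done in Subsection~\ref{sub:Blanchfield} for $H^1$ with $Q/\Lambda_S$ coefficients), the proof reduces to the three $\mathrm{Ext}$-vanishing statements above, each of which is immediate from the hypotheses and from Theorem~\ref{thm:main1}. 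A minor secondary point is to confirm $\Lambda_S$ is Noetherian of finite global dimension so that the spectral sequence converges and the truncation of degrees is legitimate; this is standard since $\Lambda_S$ is a localization of a Laurent polynomial ring over $\mathbb{Z}$.
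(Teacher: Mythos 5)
Your proposal is correct and takes essentially the same route as the paper: apply Levine's universal coefficient spectral sequence to $C_*(W_F;\Lambda_S)$ and observe that all terms interfering with $E_2^{0,2}=\Hom_{\Lambda_S}(H_2(W_F;\Lambda_S),\Lambda_S)$ in total degree $2$ vanish because $H_0(W_F;\Lambda_S)=H_1(W_F;\Lambda_S)=0$ by Lemma~\ref{lem:lambdaS} and Corollary~\ref{cor:H10}, so the edge map $\ev\circ\kappa$ is an isomorphism. Your extra appeal to Theorem~\ref{thm:main1} for freeness of $H_2(W_F;\Lambda_S)$ (and the global-dimension remark) is superfluous, since $E_2^{1,2}$ sits in total degree $3$ and no differential from it can reach $E_2^{0,2}$; the paper's proof uses only the two vanishing statements.
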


\begin{proof}
Since the map 
\[ \kappa\colon H^2(W_F;\Lambda_S) \,\,\rightarrow\,\, H_2\Big(\overline{\Hom_{\Lambda_S}\big(C_*(W_F;\Lambda_S),\Lambda_S\big)}\Big)\]  is easily seen to be an isomorphism. It suffices to show that $\ev$ is an isomorphism. Now we consider the universal coefficient spectral sequence, see e.g.\ \cite[Theorem 2.3]{Lev}, which starts at~$E^2_{p,q}= \text{Ext}^q_{\Lambda_S}(H_p(X;\Lambda_S),\Lambda_S)$, has differential of degree~$(1-r,r)$ and converges to~$H_*\big({\Hom_{\Lambda_S}\big(\overline{C_*(W_F;\Lambda_S)},\Lambda_S\big)}\big)$. We have~$H_0(W_F;\Lambda_S)=H_1(W_F;\Lambda_S)=0$ by Lemma~\ref{lem:lambdaS} and Corollary~\ref{cor:H10}. Therefore 
\[ \begin{array}{rcl} \text{ev}\colon  H_2\big({\Hom_{\Lambda_S}(\overline{C_*(W_F;\Lambda_S)},\Lambda_S)}\big) &\rightarrow&
\text{Ext}^0_{\Lambda_S}(H_2(W_F;\Lambda_S),\Lambda_S)\\
&=& {\Hom_{\Lambda_S}\big(H_2(C_*(W_F;\Lambda_S)),\Lambda_S\big)}\end{array}\]
is an isomorphism.
\end{proof}

\begin{lemma}
\label{lem:W_FFtorsion}
$H_1(\partial W_F; \Lambda_S)$ is isomorphic to~$H_1(X_L;\Lambda_S)$. 
\end{lemma}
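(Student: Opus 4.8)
The plan is to realize $\partial W_F$ as a union of the link exterior and a ``lateral'' part, and to kill the lateral contribution via Mayer--Vietoris with $\Lambda_S$-coefficients. Concretely, $X_L=S^3\cap W_F$ is a codimension-zero submanifold of $\partial W_F$ (since $\nu F\cap S^3$ is exactly a tubular neighborhood $\nu L$ of $L$), so writing $M:=\overline{\partial W_F\setminus X_L}$ for the complementary part, coming from the frontier of the tube $\nu F$, we get $\partial W_F=X_L\cup_{\partial X_L}M$. I would first record that the two coefficient systems are compatible: the homomorphism $\pi_1(X_L)\to\mathbb Z^\mu$ defining the Alexander module is the restriction of $\pi_1(W_F)\to H_1(W_F)=\mathbb Z^\mu$, since a meridian of the $i$-th sublink maps to $t_i$ on both sides. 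Then, from the Mayer--Vietoris sequence
\[
\cdots\to H_n(\partial X_L;\Lambda_S)\to H_n(X_L;\Lambda_S)\oplus H_n(M;\Lambda_S)\to H_n(\partial W_F;\Lambda_S)\to\cdots,
\]
it suffices to prove $H_*(\partial X_L;\Lambda_S)=0$ and $H_*(M;\Lambda_S)=0$; the inclusion $X_L\hookrightarrow\partial W_F$ then induces isomorphisms in every degree, in particular in degree~$1$, which is the assertion.

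The vanishing of $H_*(\partial X_L;\Lambda_S)$ is easy. Each component of $\partial X_L$ is a torus $T_c$ around a component of some $L_i$, and a meridian of $T_c$ maps to $t_i\neq e\in\mathbb Z^\mu$; viewing $T_c$ as $S^1\times S^1$ with the meridian as one factor, Lemma~\ref{lem:lambdaS}(1) shows that $C_*(T_c;\mathbb Z[\mathbb Z^\mu][(t_i-1)^{-1}])$ is acyclic, and flatness of $\Lambda_S$ over that localization gives $H_*(T_c;\Lambda_S)=0$, hence $H_*(\partial X_L;\Lambda_S)=0$.

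The heart of the argument is $H_*(M;\Lambda_S)=0$. Here $M$ is the frontier in $D^4$ of a tubular neighborhood of the immersed push-in $F=F_1\cup\cdots\cup F_\mu$; away from the finitely many transverse double points of $F$ (one per clasp) it is the unit normal $S^1$-bundle of $F$, which is a product bundle since each $F_i$, being a pushed-in Seifert surface, has trivial normal bundle, while near a clasp double point it is the boundary of the plumbing of two trivial $D^2$-bundles. I would decompose $M$ along the lines of the decomposition $W_F=B\cup Z_1\cup Z_2$ of Section~\ref{sec: Pushin}: over each $Y_i\simeq S_i$ a piece of the form (punctured $S_i$)$\times S^1$ with the $S^1$-factor a meridian of $F_i$; over each clasp region $X_{ij}^k$ a piece assembled from copies of (annulus)$\times S^1$, with the $S^1$-factor a meridian of $F_i$ or of $F_j$, glued along tori; and the overlaps between these pieces, which are again tori, or products of an arc or a circle with a meridional $S^1$. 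In every one of these building blocks the distinguished $S^1$ maps to some $t_l\neq e$ in $\mathbb Z^\mu$, so Lemma~\ref{lem:lambdaS}(1) makes it $\Lambda_S$-acyclic, and an iterated Mayer--Vietoris over this finite decomposition gives $H_*(M;\Lambda_S)=0$.

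The main obstacle is precisely this last step: making the decomposition of $M$ explicit and verifying that each constituent (and each overlap) really is a product --- or a finite union of products glued along $\Lambda_S$-acyclic tori --- with a circle factor representing a meridian of one of the $F_i$, especially at the clasp double points. Once that geometric bookkeeping is carried out, the conclusion $H_1(\partial W_F;\Lambda_S)\cong H_1(X_L;\Lambda_S)$ is a purely formal consequence of Lemma~\ref{lem:lambdaS} and Mayer--Vietoris, and in fact holds verbatim in all degrees.
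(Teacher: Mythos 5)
Your proof is correct and takes essentially the same route as the paper: decompose $\partial W_F = X_L \cup_{L\times S^1} \overline{\partial \nu F}$, kill $L\times S^1$ with Lemma~\ref{lem:lambdaS}, and then kill the lateral piece by cutting it into products carrying a meridional circle factor (punctured surfaces times $S^1$, glued along tori at the clasp double points) and applying Lemma~\ref{lem:lambdaS} again through Mayer--Vietoris. The ``geometric bookkeeping'' at the double points that you flag as the remaining obstacle is treated in the paper at the same level of detail (remove a disk $D_{ij}^k$ around each double point to obtain $X_i\times S^1$, then glue back $D_{ij}^k\times S^1$ along the tori $\partial D_{ij}^k\times S^1$), so there is no genuine gap.
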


\begin{proof}
The boundary of~$W_F$ decomposes as~$X_L \cup_{L \times S^1} \overline{\partial \nu F}$ and the resulting Mayer-Vietoris sequence with~$\Lambda_S$-coefficients is given by 
$$ H_1(L \times S^1;\Lambda_S) \rightarrow H_1(X_L;\Lambda_S) \oplus H_1(\overline{\partial \nu F};\Lambda_S) \rightarrow H_1(\partial W_F;\Lambda_S) \rightarrow H_0(L \times S^1;\Lambda_S).~$$
As the restriction of~$H_1(W_F) \rightarrow \mathbb{Z}^\mu $ to~$H_1(L_i \times S^1;\mathbb{Z})$ sends each meridian to~$t_i$, Lemma \ref{lem:lambdaS} ensures that~$C_*(L \times S^1;\Lambda_S)$ is acyclic. Consequently, it only remains to show that~$H_1(\overline{\partial \nu F};\Lambda_S)$ vanishes. Away from the $c$ double points of $F$, the boundary of a tubular neighborhood of each~$F_i$ consists in~$F_i \times S^1$ for $i=1,\dots,\mu$. Given a double point $C_{ij}^k \in F_i \cap F_j$, remove the open disk $D_{ij}^k$ which consists in the component of $\nu (F_j\cap F_i)$ containing $C_{ij}^k$. Repeating the process for each double point produces punctured surfaces~$X_1,\dots,X_\mu$. The manifold~$\overline{\partial \nu F}$ can now be recovered from the union of the~$X_i \times S^1$ by gluing each $D_{ij}^k \times S^1$ along the tori~$\partial D_{ij}^k \times S^1$.
The corresponding Mayer-Vietoris exact sequence yields
$$  \smoplus{i<j}{} \smoplus{k=1}{c(i,j)} H_1\big( \partial D_{ij}^k \times S^1;\Lambda_S \big) \rightarrow \smoplus{i=1}{\mu} H_1(X_i \times S^1;\Lambda_S ) \rightarrow H_1(\overline{\partial \nu F};\Lambda_S) \rightarrow \smoplus{i<j}{} \smoplus{k=1}{c(i,j)} H_0 \big( \partial D_{ij}^k \times S^1;\Lambda_S \big).$$
As each~$S^1$ factor arises as a meridian of the link~$L$, one can apply Lemma \ref{lem:lambdaS} and the claim immediately follows.
\end{proof}

\begin{lemma}
\label{lem:cohomoIsom}
If $H_1(X_L;\Lambda_S)$ is torsion, then the inclusion induced map $ H^2(W_F,\partial W_F;Q) \to H^2(W_F;Q)$ is an isomorphism.
\end{lemma}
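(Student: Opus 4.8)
The plan is to extract the claim from the long exact sequence of the pair $(W_F,\partial W_F)$ with $Q$-coefficients,
\[
H^1(\partial W_F;Q)\to H^2(W_F,\partial W_F;Q)\to H^2(W_F;Q)\to H^2(\partial W_F;Q),
\]
so that it suffices to show $H^1(\partial W_F;Q)=0$ and $H^2(\partial W_F;Q)=0$. Since $Q$ is a field, $H^k(\partial W_F;Q)$ is dual to $H_k(\partial W_F;Q)=H_k(\partial W_F;\Lambda_S)\otimes_{\Lambda_S}Q$, so the task reduces to showing that $H_1(\partial W_F;\Lambda_S)$ and $H_2(\partial W_F;\Lambda_S)$ are $\Lambda_S$-torsion. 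For $H_1$ this is immediate from Lemma~\ref{lem:W_FFtorsion}, which identifies it with $H_1(X_L;\Lambda_S)$, torsion by hypothesis.

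The remaining point is to see that $H_2(\partial W_F;\Lambda_S)$ is torsion, equivalently that $H_2(\partial W_F;Q)=0$. I would deduce this from Poincar\'e duality on the closed $3$-manifold $\partial W_F$: over the field $Q$, $H_2(\partial W_F;Q)\cong H^1(\partial W_F;Q)$, which is dual to $H_1(\partial W_F;Q)=0$ by the previous paragraph. (Alternatively one computes $\chi$ and uses that $H_0(\partial W_F;Q)=0$, which follows from Lemma~\ref{lem:lambdaS}~(2) since each meridian of $L$ maps nontrivially, together with $H_3(\partial W_F;Q)\cong H^0(\partial W_F;Q)=0$; then $H_1=0$ forces $H_2=0$.) Feeding $H^1(\partial W_F;Q)=H^2(\partial W_F;Q)=0$ into the long exact sequence of the pair yields that $H^2(W_F,\partial W_F;Q)\to H^2(W_F;Q)$ is an isomorphism.

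The only mild subtlety — and the step I would be most careful about — is making sure the duality and universal-coefficient identifications are applied to the correct coefficient module: one uses that $Q$ is a flat $\Lambda_S$-module (so $H_*(-;Q)=H_*(-;\Lambda_S)\otimes_{\Lambda_S}Q$) and that $Q$ is a field with involution, so that $H^k(-;Q)\cong\overline{\Hom_Q(H_k(-;Q),Q)}$ with no $\mathrm{Ext}$ terms, and Poincar\'e duality for the closed oriented $3$-manifold $\partial W_F$ holds with these twisted coefficients. None of this requires the C-complex to be totally connected; only the torsionness of $H_1(X_L;\Lambda_S)$ is used, via Lemma~\ref{lem:W_FFtorsion}.
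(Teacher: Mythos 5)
Your proposal is correct and follows essentially the same route as the paper: both reduce the claim to the vanishing of $H_1(\partial W_F;Q)$ (via Lemma~\ref{lem:W_FFtorsion}, flatness of $Q$, and the torsion hypothesis) and of $H_2(\partial W_F;Q)$ (via Poincar\'e duality or an Euler characteristic count on the closed $3$-manifold $\partial W_F$), and then conclude from the long exact sequence of the pair $(W_F,\partial W_F)$ with $Q$-coefficients. The only cosmetic difference is that you run the exact sequence directly in cohomology using universal coefficients over the field $Q$, whereas the paper argues in homology and dualizes at the end.
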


\begin{proof}
As the Alexander module of~$L$ is torsion over~$\Lambda_S$, and~$Q$ is flat over~$\Lambda_S$, Lemma \ref{lem:W_FFtorsion} implies that~$H_1(\partial W_F;Q)$ vanishes. By an Euler characteristic argument, we see that $H_2(\partial W_F;Q)=0$. Thus the long exact sequence of the pair~$(W_F,\partial W_F)$ with $Q$-coefficients implies that $H_2(W_F;Q) \rightarrow H_2(W_F,\partial W_F;Q)$ is an isomorphism and the result follows by duality.
\end{proof}

\subsection{Conclusion of the proof}
\label{sub:conclusion}
Throughout this section, let $S$ be a totally connected C-complex for a $\mu$-colored link $L$. We denote by $W=W_F$ the exterior of the push-in $F$ of the C-complex $S$ as defined in Section~\ref{sub:complement}. Corollary \ref{cor:H10} together with the long exact sequence of the pair~$(W,\partial W)$ yield the short exact sequence
\begin{equation}
H_2(W;\Lambda_S) \rightarrow H_2(W,\partial W;\Lambda_S) \xrightarrow{\partial} H_1(\partial W;\Lambda_S) \rightarrow 0.
\end{equation}
Combining this with the isomorphism provided by Lemma \ref{lem:cohomoIsom} leads to the following diagram
\begin{equation}
\label{eq:LargeDiagram}
\xymatrix@R0.6cm@C-1cm{
H_2(W;\Lambda_S) \ar[rr] \ar[dd]^{\Theta} && H_2(W,\partial W;\Lambda_S) \ar[rrrrr]^-\partial \ar[d]^{PD} &&&&& H_1(\partial W;\Lambda_S)\ar[d]^{PD}\ar[rr] \ar@/^/[dddrr]^{\Omega} &&0\\
&& H^2(W;\Lambda_S) \ar[rrrrr]\ar[d]\ar[lld]^{\ev \circ \kappa}&&&&& H^2(\partial W;\Lambda_S)\ar[d]^{BS^{-1}} && \\
 \overline{\Hom(H_2(W;\Lambda_S),\Lambda_S)} \ar[d] && H^2(W;Q)\ar[lld]^{\ev \circ \kappa}\ar[d]^{\cong} &&&&& H^1(\partial W;Q/\Lambda_S)\ar[rrd]^{\ev \circ \kappa} \ar[d]&&   \\ 
 \overline{ \Hom(H_2(W;\Lambda_S),Q) } \ar[d]^{}&& H^2(W,\partial W;Q)\ar[rrrrr]\ar[lld]^{\ev \circ \kappa}&&&&& H^2(W,\partial W;Q/\Lambda_S) \ar[rrd]^{\ev \circ \kappa}&&   \overline{ \Hom(H_1(\partial W;\Lambda_S),Q/\Lambda_S) } \ar[d]_{\partial^{\sharp}} \\
 \overline{\Hom(H_2(W,\partial W;\Lambda_S),Q)} \ar[rrrrrrrr]^{} && &&&& &&&  \overline{ \Hom(H_2(W,\partial W;\Lambda_S),Q/\Lambda_S) },
}
\end{equation}
where all homomorphisms are understood to be homomorphisms of $\Lambda_S$-modules.
The top middle square commutes by duality. All the squares involving evaluation maps clearly commute, while the upper left (resp. right) square commutes by definition of the intersection (resp. Blanchfield) pairing. Finally the middle rectangle \textit{anti}-commutes thanks to the following algebraic lemma whose statement is a variation on \cite[Lemma 4.4]{BLLV}. 

\begin{lemma}
\label{lem:cube}
Consider the following commutative diagram of chain complexes of $\Lambda_S$-modules:
$$ \xymatrix@R0.5cm{
  &0 \ar[d]&  0\ar[d]&  0\ar[d]&  \\
0 \ar[r] &A\ar[d] \ar[r]& B \ar[d]_{v_B}\ar[r]^{h_B}& C \ar[r]\ar[d]& 0 \\
0 \ar[r] &D \ar[r]^{h_D}\ar[d]_{v_D}& E \ar[r]\ar[d]& F \ar[r]\ar[d]& 0 \\
0 \ar[r] &H \ar[r]\ar[d]& J \ar[r]\ar[d]& K \ar[r]\ar[d]& 0  \\
  &0 &  0&  0.& 
}$$
If $J$ is acyclic, then the following diagram anti-commutes
$$ \xymatrix@R0.5cm{
H_{-2}(D) \ar[r]^{v_D}\ar[d]^{h_D} & H_{-2}(H) \\
H_{-2}(E) \ar[d]^{{v_B}^{-1}}_\cong & H_{-1}(K)\ar[d] \ar[u]_\cong \\
H_{-2}(B) \ar[r]^{h_B}& H_{-2}(C), \\
}
$$
where the right vertical maps are boundary homomorphisms.
\end{lemma}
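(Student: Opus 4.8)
The plan is to chase an element of $H_{-2}(D)$ around the diagram at the level of chains, exploiting the acyclicity of $J$ at each step to invert the maps that the diagram requires. First I would set up notation: let $d$ denote the differentials in all the complexes, and recall that the maps $v_D, h_D$ on homology are induced by the chain maps with the same names, while the two right-hand vertical maps are the connecting homomorphisms of the short exact sequences of chain complexes $0\to C\to F\to K\to 0$ (giving $H_{-1}(K)\to H_{-2}(C)$) and $0\to H\to J\to K\to 0$ (giving, via acyclicity of $J$, the inverse of an isomorphism $H_{-1}(K)\xrightarrow{\cong}H_{-2}(H)$). The isomorphism $v_B^{-1}\colon H_{-2}(E)\to H_{-2}(B)$ comes from the short exact sequence $0\to B\xrightarrow{v_B} E\to (E/B)\to 0$ together with the fact that $E/B\cong J$ is acyclic. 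So every arrow in the target diagram has a concrete description in terms of the three short exact rows and three short exact columns.

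The heart of the argument is then a diagram chase. Start with a cycle $x\in D_{-2}$ representing a class in $H_{-2}(D)$. On one side: push $x$ forward by $h_D$ into $E_{-2}$, lift it through $v_B$ to some $\tilde{x}\in B_{-2}$ (possible modulo boundaries precisely because $J\cong E/B$ is acyclic, so the obstruction to lifting lies in $H_{-2}(J)=0$), then apply $h_B$ to land in $C_{-2}$. On the other side: lift $x$ through $v_D$ — actually push it into $H_{-2}$ by $v_D$ — apply the isomorphism $H_{-2}(H)\cong H_{-1}(K)$ backwards to get a class in $H_{-1}(K)$, then apply the connecting map $H_{-1}(K)\to H_{-2}(C)$. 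The claim is that these two composites differ by a sign. I would verify this by writing out explicit representatives: choose $b\in B_{-1}$, $c\in C_{-1}$ etc. realizing all the necessary lifts, and track the single sign that arises from the antisymmetry between ``lift then take boundary'' in the two different short exact sequences that share the term $K$ (the column $0\to C\to F\to K\to 0$ versus the column $0\to H\to J\to K\to 0$), combined with the sign coming from commutativity of the original $3\times 3$ square $h_D v_D$ vs.\ the path through $B$. Concretely, commutativity of the middle-left square of the big diagram gives $v_E h_D = h_E v_B$ at the chain level (where $v_E, h_E$ are the unnamed maps $D\to H$ composed appropriately), and the two connecting homomorphisms attached to $K$ carry opposite signs because one uses the ``top'' short exact sequence and the other the ``bottom'' one in the commuting square of short exact sequences; assembling these yields exactly one net sign.

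The main obstacle, and where I would spend the most care, is bookkeeping the signs and the well-definedness of each intermediate lift: one must check that the lift of $x$ through $v_B$ exists as a genuine chain (using $H_{-2}(J)=0$) and is unique up to something that maps to a boundary under $h_B$ (using $H_{-1}(J)=0$), so that the composite is well defined on homology; and similarly that the backwards isomorphism $H_{-2}(H)\xrightarrow{\cong}H_{-1}(K)$ is legitimate (again acyclicity of $J$ forces $H_*(H)\cong H_{*+1}(K)$ from the column $0\to H\to J\to K\to 0$). Once all lifts are seen to exist and be essentially unique, the sign computation is a finite, mechanical check. This is precisely the content of \cite[Lemma 4.4]{BLLV}, so I would follow that argument, adapting it to the present indexing convention (homological degrees $-2, -1$) and to the fact that here it is $J$ rather than some other term that is assumed acyclic.
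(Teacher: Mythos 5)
Your proposal is correct and is essentially the argument the paper intends: the paper offers no proof of Lemma~\ref{lem:cube} beyond the reference to \cite[Lemma 4.4]{BLLV}, and the chase you outline (lift $h_D(x)$ through $v_B$ using $H_{-2}(J)=0$, check well-definedness with $H_{-1}(J)=0$, invert the connecting homomorphism of the bottom row, and compare inside the column $0\to C\to F\to K\to 0$) does establish the anti-commutativity. The one point to sharpen is the provenance of the sign: if $j\in J_{-1}$ bounds the image of $h_D(x)$, $e\in E_{-1}$ lifts $j$, and $h_D(x)-de=v_B(\tilde x)$, then the image of $e$ in $F_{-1}$ lifts the relevant cycle of $K_{-1}$, and its boundary equals \emph{minus} the image of $h_B(\tilde x)$ in $F_{-2}$ precisely because the composite $D\to E\to F$ of the middle row is zero — so the minus sign comes from that exactness relation (not from an asymmetry between the two short exact sequences ending in $K$), and your displayed identification of the commuting square, ``$v_Eh_D=h_Ev_B$'', should read instead as the commutativity of $D\to E\to J$ with $D\to H\to J$.
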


Consequently, if one defines a pairing $\theta$ on $H_2(W,\partial W;\Lambda_S)$ by the composition
\begin{align*}
H_2(W,\partial W;\Lambda_S) \xrightarrow{\operatorname{PD}} H^2(W;\Lambda_S) \rightarrow H^2(W;Q) &\cong H^2(W,\partial W;Q) \\
 &\rightarrow \overline{\Hom_{\Lambda_S}(H_2(W,\partial W;\Lambda_S),Q)},
\end{align*} 
where for the third map, Lemma \ref{lem:cohomoIsom} was used, then the commutativity of the diagram in equation (\ref{eq:LargeDiagram}) immediately implies the commutativity of 
\begin{equation}
\label{eq:DiagramThreeLines}
\xymatrix@C1.4cm@R0.5cm{ H_2(W;\Lambda_S) \times H_2(W;\Lambda_S) \ar[r]^{\ \ \ \ \  \ \ \ \ \ \ \ \ -\lambda}\ar[d]^{} & \Lambda_S\ar[d]  \\
H_2(W,\partial W;\Lambda_S) \times H_2(W,\partial W;\Lambda_S) \ar[r]^{\ \ \ \ \ \ \ \ \ \ \ \ \ \ \ \ \ \ \ \  -\theta}\ar[d]^{\partial \times \partial } & Q \ar[d]   \\
H_1(\partial W;\Lambda_S) \times H_1(\partial W;\Lambda_S) \ar[r]^{\ \ \ \ \ \ \ \ \ \  \ \ \operatorname{Bl}(L)}& Q/\Lambda_S.
}
\end{equation} 
We pick our basis $\mathcal{B}$ for~$H_1(S)$. This basis yields generalized Seifert matrices~$A^\varepsilon$ for $L$ and it induces a basis~$\mathcal{C}$ of~$H_2(W;\Lambda_S)$ (recall Proposition \ref{prop:phi}). We endow $\overline{\Hom_{\Lambda_S}(H_2(W;\Lambda_S),\Lambda_S)}$ with the corresponding dual basis $\mathcal{C}^*$. Now we consider the following commutative diagram of $\Lambda_S$-homomorphisms
\begin{equation}
\label{eq:DiagramForBasis}
 \xymatrix@R0.5cm@C0.9cm{
H_2(W;\Lambda_S)\ar[d]\ar[r]^-{PD} & H^2(W,\partial W;\Lambda_S)\ar[d]\\
H_2(W,\partial W;\Lambda_S)\ar[r]^-{PD} &H^2(W;\Lambda_S)\ar[r]^-{\ev \circ \kappa}
&\overline{\Hom_{\Lambda_S}(H_2(W;\Lambda_S),\Lambda_S)}.}
\end{equation}
Here the bottom-right map is an isomorphism by  Lemma \ref{lem:ev}.
\begin{claim}
With respect to the bases $\mathcal{C}$ and $\mathcal{C}^*$, the homomorphism $$\Theta: H_2(W;\Lambda_S)\to  \overline{\Hom_{\Lambda_S}(H_2(W;\Lambda_S),\Lambda_S)}$$ is represented by the matrix $H(t)^T$.
\end{claim}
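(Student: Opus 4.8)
The plan is to derive the Claim directly from Theorem~\ref{thm:intform} together with the definition of the twisted intersection form, the only real content being the bookkeeping of the conjugation built into the conjugate dual module. First I would note that the map $\Theta$ in the Claim coincides with the one used in Subsection~\ref{sub:intersection forms} to define the twisted intersection form $\lambda$ of $W$: the two composites $H_2(W;\Lambda_S)\to H^2(W;\Lambda_S)$ appearing in the diagram just before the Claim agree because that square commutes, so $\Theta$ equals the composition $H_2(W;\Lambda_S)\to H_2(W,\partial W;\Lambda_S)\xrightarrow{\operatorname{PD}}H^2(W;\Lambda_S)\xrightarrow{\ev\circ\kappa}\overline{\Hom_{\Lambda_S}(H_2(W;\Lambda_S),\Lambda_S)}$, and by construction $\lambda(x,y)=\Theta(y)(x)$ for all $x,y\in H_2(W;\Lambda_S)$.

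Next I would recall that $H_2(W;\Lambda_S)$ is free over $\Lambda_S$ with basis $\mathcal{C}=\{c_1,\dots,c_n\}$ (the image of $\mathcal{B}$ under the isomorphism $\Phi$ of Theorem~\ref{thm:main1}, using that $H_1(S)$ is a finitely generated free abelian group), that $\mathcal{C}^{*}=\{c_1^{*},\dots,c_n^{*}\}$ is a $\Lambda_S$-basis of the conjugate module $\overline{\Hom_{\Lambda_S}(H_2(W;\Lambda_S),\Lambda_S)}$ (here one uses that the involution on $\Lambda_S$ is a ring automorphism), and that by Theorem~\ref{thm:intform} one has $\lambda(c_i,c_j)=H(t)_{ij}$. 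Expanding $\Theta(c_j)=\sum_i a_{ij}c_i^{*}$ with the scalar action of the conjugate module, and using that there $r\cdot f=\overline{r}\,f$, one gets $\Theta(c_j)(c_i)=\overline{a_{ij}}$. Comparing with $\Theta(c_j)(c_i)=\lambda(c_i,c_j)=H(t)_{ij}$ yields $a_{ij}=\overline{H(t)_{ij}}$, so that $\Theta$ is represented by $\overline{H(t)}$; since $H(t)$ is hermitian (Subsection~\ref{sub:Seifert}) one has $\overline{H(t)}=H(t)^{T}$, which gives the Claim.

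The step that requires genuine care, and the only one that is not purely formal, is this last one: fixing the direction of the defining identity $\lambda(x,y)=\Theta(y)(x)$ and correctly tracking the conjugation coming from the conjugate-module structure on $\overline{\Hom_{\Lambda_S}(H_2(W;\Lambda_S),\Lambda_S)}$ when passing from the values of $\lambda$ on pairs of basis elements to the matrix of $\Theta$ in $\mathcal{C}$ and $\mathcal{C}^{*}$. Since $H(t)$ is hermitian, a transpose or conjugation slip here would still produce a matrix differing from $H(t)^{T}$ only by a transposition, so it is worth cross-checking the convention against the knot case $H(t)=(1-t)A^{T}+(1-t^{-1})A$. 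Everything else is an immediate consequence of Theorems~\ref{thm:main1} and~\ref{thm:intform}.
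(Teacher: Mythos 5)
Your argument is correct and is essentially the paper's own proof: both start from the defining identity $\lambda(x,y)=\Theta(y)(x)$, invoke Theorem~\ref{thm:intform} to get $\lambda(c_i,c_j)=H(t)_{ij}$, track the conjugation coming from the involuted module structure on $\overline{\Hom_{\Lambda_S}(H_2(W;\Lambda_S),\Lambda_S)}$ to find that the matrix of $\Theta$ is $\overline{H(t)}$, and then use that $H(t)$ is hermitian to identify this with $H(t)^T$. Your preliminary remark identifying $\Theta$ with the composite from Subsection~\ref{sub:intersection forms} is a harmless elaboration of what the paper takes for granted.
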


\begin{proof}
By definition, the intersection pairing is given by $\lambda(x,y)= \Theta(y)(x)$.
In Theorem \ref{thm:intform}, we found out that the matrix representing $\lambda$ with respect to the basis $\mathcal{C}$ is $H(t)$. If  $\mathcal{C}= \{x_1,\dotsc , x_n \}$, we can hence write
$$\Theta(x_j)\,\,=\,\,
 \sum_{i=1}^n \overline{\Theta(x_j)(x_i)} \, x_i^*\, \,=\,\, \sum_{i=1}^n \overline{\lambda(x_i,x_j)} \, x_i^*\,\, =\,\, \sum_{i=1}^n \overline{H(t)_{ij}} \, x_i^*\,\, =\,\, \sum_{i=1}^n H(t)_{ji} \, x_i^* ,$$
which proves the claim. Notice that in the first equality, the bar appears because of the involuted structure of $\overline{\Hom_{\Lambda_S}(H_2(W;\Lambda_S),\Lambda_S)}$, while the last equality is true because $H(t)$ is a hermitian matrix.
\end{proof}
Now we equip $H_2(W,\partial W;\Lambda_S)$ with the basis induced from $\mathcal{C}^*$ and the bottom two isomorphisms in the diagram of equation (\ref{eq:DiagramForBasis}). Then, by commutativity of the diagram, the left vertical map is also represented by $H(t)^T$. Rewriting the commutative diagram  of equation (\ref{eq:DiagramThreeLines}) in terms of these bases, and setting~$n=\text{rk}_{\mathbb{Z}} H_1(S)$ one obtains
\[ \xymatrix@C2.3cm@R0.5cm{\Lambda_S^n \times \Lambda_S^n \ar[r]^{(a,b) \mapsto -a^T H(t) \overline{b}}\ar[d]_{(a,b) \mapsto (H(t)^Ta,H(t)^Tb)} & \Lambda_S \ar[d]  \\
\Lambda_S^n \times \Lambda_S^n\ar[d]\ar[r]^{(a,b) \mapsto -a^T H(t)^{-1} \overline{b}} & Q \ar[d] \\
H_1(\partial W;\Lambda_S) \times H_1(\partial W;\Lambda_S)\ar[r]^{\ \ \ \ \ \ \ \ \ \ \ \ \ \ \operatorname{Bl}(L)} & Q/\Lambda_S.
} \]
Here the middle horizontal map is determined by the top horizontal map, the vertical maps and the commutativity. So the Blanchfield pairing on~$H_1(\partial W; \Lambda_S)$ is isometric to the pairing

\begin{align*}
\Lambda_S^n / H(t)^T \Lambda_S^n \times \Lambda_S ^n / H(t)^T \Lambda_S^n &\rightarrow Q/\Lambda_S \\
(a,b) &\mapsto -a^T H(t)^{-1}\overline{b}.
\end{align*}
Using Lemma \ref{lem:W_FFtorsion}, this is also the Blanchfield pairing on~$H_1(X_L;\Lambda_S)$, concluding the proof.

\subsection{The classical formula for the Blanchfield form}
\label{section:comparison}
 As a reality check we  verify that 
Theorem~\ref{thm:blanchfield-in-terms-of-seifert-matrix-for-knot} is indeed a consequence of Theorem~\ref{thm:main}.

Let $K$ be an oriented knot and let~$A$ be a Seifert matrix for~$K$ of size~$2g$. Following the convention of~\cite{Rolfsen} we have $A^-=A$ and $A^+=A^T$. Therefore
\[ H(t)\,\,=\,\,(1-t)A^T+(1-t^{-1})A\,\,=\,\,-(t^{-1}-1)(A-tA^T).\]
We consider the following commutative diagram (from which Theorem \ref{thm:blanchfield-in-terms-of-seifert-matrix-for-knot} follows): 
\[ 
\xymatrix@R0.4cm@C0.2cm{ H_1(X_K;\mathbb{Z}[t^{\pm 1}])\times H_1(X_K;\mathbb{Z}[t^{\pm 1}])\ar[rrrrr]^-{\op{Bl}(K)}\ar[dd]^\cong 
\ar[drrr]&&&&&&\mathbb{Q}(t)/\mathbb{Z}[t^{\pm 1}]\ar[dd]\\
&&&\Delta_K(t)^{-1}\mathbb{Z}[t^{\pm 1}]/\mathbb{Z}[t^{\pm 1}]
\ar@{^(->}[drrr]\ar[urrr]
 \\
 \Lambda_S^{2g}/H(t)^T\Lambda_S^{2g} \times  \Lambda_S^{2g}/H(t)^T\Lambda_S^{2g} \ar[rrrrr]^-{(a,b) \mapsto -a^T H(t)^{-1}\overline{b}}\ar[dd]^{=} &&&&&&\mathbb{Q}(t)/\Lambda_S\ar[dd]^=\\\\
\Lambda_S^{2g}/(tA-A^T)\Lambda_S^{2g} \times  \Lambda_S^{2g}/(tA-A^T)\Lambda_S^{2g} \ar[rrrrr]^-{(a,b) \mapsto a^T (t^{-1}-1)^{-1}(A-tA^T)^{-1}\overline{b}}\ar[dd]^{(v,w)\mapsto ((t-1)^{-1}v,(t-1)^{-1}w)} &&&&&&\mathbb{Q}(t)/\Lambda_S\ar[dd]^=\\\\
\Lambda_S^{2g}/(tA-A^T)\Lambda_S^{2g} \times  \Lambda_S^{2g}/(tA-A^T)\Lambda_S^{2g} \ar[rrrrr]^-{(a,b) \mapsto a^T (t-1)(A-tA^T)^{-1}\overline{b}} &&&&&&\mathbb{Q}(t)/\Lambda_S.\\
}
\]
Here we use the following:
\begin{enumerate}
\item The Blanchfield pairing of a knot takes values in $\Delta_K(t)^{-1}\mathbb{Z}[t^{\pm 1}]/\mathbb{Z}[t^{\pm 1}]$. Since $\Delta_K(1)=\pm 1$ it is straightforward to see that the map
\[ \Delta_K(t)^{-1}\mathbb{Z}[t^{\pm 1}]/\mathbb{Z}[t^{\pm 1}]\to \Q(t)/\Lambda_S\]
is a monomorphism.
\item The top rectangle commutes by Theorem~\ref{thm:main}.
\item In the middle rectangle, we use that $-H(t)=(t^{-1}-1)(A-tA^T)$, which also implies that $H(t)^T=-(t^{-1}-1)(A^T-tA)$. Moreover, we use that $t^{-1}-1=(1-t)t^{-1}$ is a unit in $\Lambda_S$.
\item A straightforward calculation shows that the last rectangle commutes.
\end{enumerate}

\bibliographystyle{plain}
\bibliography{Biblioblanchfieldseifert}

￼
\end{document}